\documentclass[sn-mathphys-num]{sn-jnl}

\usepackage{graphicx}%
\usepackage{multirow}%
\usepackage{amsmath,amssymb,amsfonts}%
\usepackage{amsthm}%
\usepackage{mathrsfs}%
\usepackage[title]{appendix}%
\usepackage{xcolor}%
\usepackage{textcomp}%
\usepackage{manyfoot}%
\usepackage{booktabs}%
\usepackage{algorithm}%
\usepackage{algorithmicx}%
\usepackage{algpseudocode}%
\usepackage{listings}%

\usepackage{bbm}            			
\usepackage{mathalfa}
\usepackage{enumitem} 
\usepackage[T2A]{fontenc}

\theoremstyle{thmstyleone}%
\theoremstyle{definition}
\newtheorem{theorem}{Theorem}
\newtheorem{proposition}[theorem]{Proposition}%
\newtheorem{lemma}[theorem]{Lemma}
\newtheorem{corollary}[theorem]{Corollary}
\newtheorem{definition}[theorem]{Definition}
\newtheorem{example}[theorem]{Example}
\newtheorem{remark}[theorem]{Remark}

\begin{document}

\title[Article Title]{Image transformations, Markov operators, and sample median}

\author*[1]{\fnm{S. V.} \sur{Butler}}\email{svetbutler@gmail.com}

\affil*[1]{\orgdiv{Department of Mathematics}, \orgname{University of California, Santa Barbara}, 
\orgaddress{\street{South Hall, Room 6007}, \city{Santa Barbara}, \postcode{93106}, \state{CA}, \country{USA}}}

\abstract{(I.) We consider generalizations of an iterated function system and the associated Markov operators.
A Markov operator, defined on the space of (deficient) topological measures on a locally compact space,
is an infinite convex linear combination of adjoints of (d-) image transformations. Restricted to measures, 
this Markov-Feller operator has a nonlinear dual operator given by an infinite convex linear combination of (conic) quasi-homomorphisms.
If (d-) image transformations are contractions with respect to the Kantorovich-Rubinstein metric, 
a Markov operator has the unique invariant (deficient) topological measure. 
Taking a compact space, finitely many inverses of contractions as image transformations, and restricting the Markov operator to measures gives 
the classical result from the theory of fractals. 
There are various relations between Markov operator 
and the iterated function system where adjoints of (d-) image transformations are contractions on 
the compact metric space of $\{0,1\}$-valued (deficient) topological measures. For instance, the invariant (deficient) topological measure 
is the composition of the fixed point of the IFS and the basic (d-) image transformation. 
(II.) We define a generalized distribution of the sample median (g.d.s.m.) for continuous proper maps using an image transformation. 
We show that the g.d.s.m. and the inverse on the sample median are equivariant under solid variables, a large collection of transformations. On 
$\mathbb{R}^n$ such transformations include rotations, translations, symmetries, stretching, projections, monotone maps, etc.    
(III.) We show that a (signed) topological measure on a locally compact space with the covering dimension $\dim X \le 1$ is a (signed) Radon measure. }

\keywords{Image transformations,  Markov operators, Image transformation systems and Iterated function systems, 
Kantorovich-Rubinstein metric, generalized distribution of the sample median}



\pacs[MSC Classification]{Primary: 28A80, 62G30, 60B05, 28A33, 47H99. Secondary: 60B10, 47H04, 47H07, 28C05, 28C15}     

\maketitle


\section{Introduction}

This paper covers three topics. (I.) We consider  generalizations of an iterated function system and the associated Markov operator. We    
discuss the unique fixed point and also obtain a Markov-Feller operator that has a nonlinear dual operator. 
(II.)  We define a generalized distribution of the sample median (g.d.s.m.) for continuous proper maps and show that the g.d.s.m. 
and the inverse on the sample median are equivariant under a wide collection of transformations. 
Although our generalizations belong to very different areas -- theory of fractals and probability $\&$ statistics --they use the same tools, 
namely, certain quasi-linear maps,  (d-) image transformations,  and their adjoints. 
(III.)  We show that a (signed) topological measure on a locally compact space with the covering dimension $\dim X \le 1$ is a (signed) Radon measure. 

This paper relies on results from \cite{Butler:QLM}, where we study (d-) image transformations and (conic) quasi-linear maps.
Quasi-linear maps are nonlinear operators from $C_0(X)$ (where $X$ is a locally compact space) 
to a normed linear space $E$ which are linear on the smallest subalgebras  generated by single functions. i.e. on 
$B(f) =  \{ \phi \circ f : \,  \phi \in C(\overline{f(X)}) \} $  (with $ \phi(0) = 0 $ if $X$ is noncompact) where $f \in C_0(X)$. 
($f \in C_0(X)$ if for every $\epsilon >0$ the set 
 $\{ x : | f | \ge \epsilon \}$ is compact.)
Conic quasi-linear maps are nonlinear operators which preserve nonnegative linear combinations on positive cones generated by single functions, 
i.e. on $ A^+(f) = \{ \phi \circ f: \ \phi \in C(\overline{f(X)}), \phi  \mbox{   is nondecreasing}\} $, (with $ \phi(0) = 0 $ 
if $X$ is noncompact)  where $f \in C_0(X)$.
When $E = \mathbb R$, we obtain quasi-linear functionals and p-conic quasi-linear functionals. 
Such nonlinear functionals (called quasi-integrals) correspond to topological measures and deficient topological measures, respectively.
These set functions generalize measures, and their definitions are close to that of a regular Borel measure, but they also have
some striking differences from Borel measures. For instance, (deficient) topological measures are not subadditive.  
The theory of quasi-linear functionals and topological measures, introduced by J. F. Aarnes \cite{Aarnes:TheFirstPaper},
has origins in mathematical axiomatization and interpretations of quantum physics. This theory has connections with different areas in mathematics, 
including probability and statistics, fractals, and Choquet integrals,
and has proved to be very influential in symplectic geometry, 
playing an important role in function theory on symplectic manifolds (\cite{PoltRosenBook}). 

Quasi-linear maps  from $C_0(X)$ to $C_0(Y)$ 
that behave like homomorphisms on singly generated subalgebras or cones  are called (conic) quasi-homomorphisms. 
They correspond to (d-) image transformations, one of the main objects of this paper.  
Image transformations and d-image transformations generalize the idea of  $u^{-1}$ in image measures $\mu \circ u^{-1}$. 
They move subsets of one space to subsets of another space so that (deficient) 
topological measures on one space produce (deficient) topological measures on another space.  
The adjoint of a (d-) image transformation is a continuous operator between spaces of (deficient) topological measures 
equipped with the weak topology, and it plays
an important role in this paper. We will give some background about (deficient) topological measures, quasi-integrals, 
(d-) image transformations and quasi-homomorphisms in the next section. 
More information about  these concepts and interconnections between them can be found in \cite{Butler:QLM}. 

We consider a generalization of an iterated function system and 
Markov operators involving (deficient) topological measures on a locally compact metric space.
A well-known result of Hutchinson gives the existence of the invariant measure for operator  
$M(\nu) = \sum_{i=1}^n \alpha_i  \  \nu \circ u_i^{-1}$ associated with the iterated function system with probabilities $(X, u_i, \alpha_i, i=1, \ldots, n)$,
see   \cite{Hutchinson}, \cite[Ch. IX]{Barnsley}.
Here $u_i$ are contractions on a complete metric space $X,   \alpha_i > 0,  \sum_{i=1}^n \alpha_i = 1$. 
Replacing finitely many $u_i^{-1}$ by infinitely many (d-) image transformations 
leads to a (d-) image transformation system, a generalization of an iterated function system. 
We consider the associated continuous Markov operator $S$ between spaces of (deficient) topological measures, 
where  $S(\mu) = \sum_{i=1}^\infty \alpha_i q_i^* (\mu)$, 
an infinite convex linear combination of adjoints of (d-) image transformations. 
Restricted to the space of measures, 
such an operator is a Markov-Feller operator with a nonlinear dual operator
$T = \sum_{i=1}^\infty \alpha_i \theta_i$
 given by an infinite convex linear combination of (conic) quasi-homomorphisms that correspond to (d-) image transformations.
We show that any uniformly bounded in variation family $\mathcal{M}$ of (deficient) topological measures is a complete metric space with respect to the 
Kantorovich-Rubinstein metric. We prove that the considered Markov operator on $\mathcal{M}$ 
has a unique invariant (deficient) topological measure if (d-) image transformations are contractions 
with respect to the Kantorovich-Rubinstein metric.
If we take a compact metric space as the underlying space $X$,  finitely many image transformations, 
each of which is the inverse of a contraction on $X$, and
restrict a Markov operator to measures on $X$, we obtain the classical result mentioned above.
We consider various relations between the Markov operator 
and the iterated function system (IFS) where adjoints of (d-) image transformations are contractions on 
the compact metric space of $\{0,1\}$-valued (deficient) topological measures. For example, the invariant (deficient) topological measure 
is the composition of the fixed point of the IFS and the basic (d-) image transformation. 
These and other results are in Section \ref{SectMarkov}. Together with new results, we also generalize and expand many results of
J. Aarnes, O. Johansen,  A. Rustad, and O. Pedersen  
about finitely many image transformations and topological measures on compact spaces 
(\cite{Aarnes:ITfirst}, \cite{AarnesJohansenRustad}, \cite{Pedersen}).

In Section \ref{SectDimSTM} we prove an important theorem linking dimension theory and topological measures. It says that  
a (signed) topological measure on a locally compact space with the covering dimension $\dim X \le 1$ is a (signed) Radon measure. Our result 
generalizes all previous results (\cite{Wheeler}, \cite{Grubb:SignedqmDimTheory}, \cite{Svistula:Signed}) 
about (signed) topological measures on compact spaces.

In Section \ref{SectSampleMed} we move to some concepts in probability and statistics. 
We define a generalized distribution of the sample median (g.d.s.m.) for continuous proper maps $T_i : Y \longrightarrow X$, 
where $X$ and $Y$ are locally compact spaces. 
For an odd number of maps the g.d.s.m. coincides with the probability distribution of the sample median, and for the even number $2n$ of maps 
 the g.d.s.m. is the average of probability distributions of the $n$th  and $(n+1)$th order statistics. 
We show that the g.d.s.m. and the inverse on the sample median are equivariant under a wide collection of transformations, so called solid variables. 
On $\mathbb{R}^n$ they include rotations, translations, symmetries, stretching, projections, monotone maps, etc.    
The results in this section are closely related to the ideas of Rustad in \cite{AlfMedian} (presented for compact metric spaces) 
and are obtained by using image transformations and their adjoints. We do not require spaces to be metric. 

\section{Preliminaries}

In this paper we consider Hausdorff spaces. We may abbreviate and write LCH for  "locally compact Hausdorff" and LC for  "locally compact". 
By $C(X)$ we denote the set of all real-valued continuous functions on $X$ with the extended uniform norm, 
by $C_b(X)$ the set of bounded continuous functions on $X$, and  
by $C_c(X)$ the set of continuous functions with compact support.  
$C_0^+(X)$, $C_c^+(X)$, $C^+(X)$ denote the collections of all nonnegative functions from $C_0(X)$, $C_c(X)$, $C(X)$, respectively.
When we consider maps into extended real numbers we assume that any such map is not identically $\infty$. 
We denote by $\overline E$ the closure of a set $E$, and by $ \bigsqcup$ a union of disjoint sets.
Notation $K_t \searrow K$ means that a decreasing net of sets ($t < s \Longrightarrow K_s \subseteq K_t $) decreases to 
$K = \bigcap_{t \in T} K_t$. Similarly, $U_t \nearrow U$ stands for an increasing to $U = \bigcup_{t \in T} U_t$ net of sets.
We denote by $id$ the identity function $id(x) = x$, 
and by $1_K$ the characteristic function of a set $K$. By $ supp \,  f $ we mean $ \overline{ \{x: f(x) \neq 0 \} }$.
By $ \delta_y$ we denote the point mass at $y$, and by $i_Y$ the map that assigns each $y \in Y$ the measure $ \delta_y$.
Several collections of sets are used often.   They include:
$\mathscr{O}(X)$;
$\mathscr{C}(X)$; and
$\mathscr{K}(X)$-- 
the collection of open subsets of $X$;  the collection of closed subsets of   $X $;
and the collection of compact subsets of   $X $, respectively.

\begin{definition} \label{MDe2}
Let $X$ be a  topological space and $\nu$ be a set function on a family $\mathcal{E}$ of subsets of $X$ that 
contains $\mathscr{O}(X) \cup \mathscr{C}(X)$
with values in $[0, \infty]$. 
We say that $\nu$ is simple if it only assumes  values $0$ and $1$; $\nu$ is finite if $ \nu(X) < \infty$.
A measure on $X$ is a countably additive set function on a $\sigma$-algebra of subsets of $X$ with values in $[0, \infty]$.  
A Borel measure on $X$ is a measure on the Borel $\sigma$-algebra on $X$.  
A Radon measure  $m$  on $X$ is a compact-finite Borel measure that is outer regular on all Borel sets, and inner compact regular on all open sets, i.e.
$m(K) < \infty$ for every compact $K$, 
$ m(E) = \inf \{ m(U): E \subseteq U, U \text{  is open} \} $ for every Borel set $E$, and 
$m(U) = \sup \{  m(K): K \subseteq U, K  \text{  is compact} \}$ for every open set $U$. 
A Borel measure is regular if on all Borel sets it is outer regular and inner compact regular.   
For a Borel measure  $m$ that is inner compact regular on all open sets (in particular, for a Radon measure)  we define $supp \ m$,  the support of  $m$, 
to be the complement of the largest open set $W$ such that $m(W) =0$. 
\end{definition}

\begin{definition}\label{TMLC}
A topological measure on $X$ is a set function
$\mu:  \mathscr{C}(X) \cup \mathscr{O}(X)  \longrightarrow  [0,\infty]$ satisfying the following conditions:
\begin{enumerate}[label=(TM\arabic*),ref=(TM\arabic*)]
\item \label{TM1} 
if $A,B, A \sqcup B \in \mathscr{K}(X) \cup \mathscr{O}(X) $ then
$
\mu(A\sqcup B)=\mu(A)+\mu(B);
$
\item \label{TM2}  
$
\mu(U)=\sup\{\mu(K):K \in \mathscr{K}(X), \  K \subseteq U\}
$ for $U\in\mathscr{O}(X)$;
\item \label{TM3}
$
\mu(F)=\inf\{\mu(U):U \in \mathscr{O}(X), \ F \subseteq U\}
$ for  $F \in \mathscr{C}(X)$.
\end{enumerate}
If in \ref{TM1}  the sets  $A,B$ are compact, then $\mu$ is called a deficient topological measure. 

For a (deficient) topological measure  $\mu$ we define $ \| \mu \| = \mu(X) = \sup \{ \mu(K): K \in \mathscr{K}(X) \}$.
We denote by $ \mathbf{DTM}(X)$ (respectively, $ \mathbf{TM}(X)$) the space of all finite deficient topological measures 
(resp., of all finite topological measures) on $X$.  $\mathbf{DTM}_1(X)$ (resp,.  $ \mathbf{TM}_1(X)$) stands for the set of 
deficient topological measures (resp, topological measures) satisfying $ \| \mu \| = 1$. 
\end{definition} 

\noindent
If two (deficient topological) measures agree on compact sets (or on open sets) then they coincide. If $\mu, \nu$ are deficient topological measures
and $\mu(A) \le \nu(A)$ for all open sets $A$ (or all compact sets $A$) then $ \mu \le \nu$.

The following theorem from \cite[Sect. 4]{Butler:DTMLC} gives criteria for a deficient topological measure to be a measure.

\begin{theorem} \label{subaddit}
Let $\mu$ be a deficient topological measure on a locally compact space $X$. 
The following are equivalent: 
\begin{itemize}
\item[(a)]
If $C, K$ are compact subsets of $X$, then $\mu(C \cup K ) \le \mu(C) + \mu(K)$.
\item[(b)]
If $U, V$ are open subsets of $X$,  then $\mu(U \cup V) \le \mu(U) + \mu(V)$.
\item[(c)]
$\mu$ admits a unique extension to an inner regular on open sets, outer regular Borel measure 
$m$ on the Borel $\sigma$-algebra of subsets of $X$. 
$m$ is a Radon measure iff $\mu$ is compact-finite. 
If $\mu$ is finite then $m$ is an outer regular and inner closed regular Borel measure.
\end{itemize}
\end{theorem}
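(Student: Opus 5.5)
The plan is to prove the cycle (a) $\Rightarrow$ (b) $\Rightarrow$ (c) $\Rightarrow$ (a). Two of these steps are easy; the construction of the measure in (b) $\Rightarrow$ (c) carries the real weight.

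\emph{From (c) to (a).} This is immediate: $m$ is a measure extending $\mu$, so it is subadditive on the compact sets $C, K$.

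\emph{From (a) to (b).} Let $U, V$ be open and let $K \subseteq U \cup V$ be compact. Standard LCH shrinking gives compact sets $C \subseteq U$ and $D \subseteq V$ with $K = C \cup D$. Then (a) and monotonicity give
\[
\mu(K) \le \mu(C) + \mu(D) \le \mu(U) + \mu(V).
\]
Taking the supremum over such $K$ and using \ref{TM2} yields (b).

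\emph{From (b) to (c).} First I extend subadditivity to countably many open sets: for compact $K \subseteq \bigcup U_n$, finitely many of the $U_n$ cover $K$, and then finite subadditivity together with \ref{TM2} gives countable subadditivity. Next I define the outer set function
\[
\mu^*(E) = \inf \{ \mu(U) : E \subseteq U, \ U \text{ open} \}.
\]
By countable subadditivity on open sets, $\mu^*$ is an outer measure, and it agrees with $\mu$ on open sets. By \ref{TM3} it also agrees with $\mu$ on closed sets.

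The key step is to show that every open set $W$ is Carathéodory measurable. For a test set $E$ with $\mu^*(E) < \infty$, pick an open $U \supseteq E$ with $\mu(U) \le \mu^*(E) + \epsilon$. By \ref{TM2}, choose a compact $K \subseteq U \cap W$ with $\mu(K) \ge \mu(U \cap W) - \epsilon$. The sets $U \cap W$ and $U \setminus K$ are open, so it remains to compare $\mu(K) + \mu(U \setminus K)$ with $\mu(U)$. Superadditivity on disjoint compacts inside $U$ handles this: take a compact $C \subseteq U \setminus K$ close in measure to $\mu(U \setminus K)$. The sets $K$ and $C$ are disjoint compacts in $U$, so the deficient-TM additivity on compacts and monotonicity give $\mu(K) + \mu(C) \le \mu(U)$. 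Since $U \setminus W \subseteq U \setminus K$, we get
\[
\mu^*(E \cap W) + \mu^*(E \setminus W) \le \mu(U) + 2\epsilon.
\]
This proves measurability. Carathéodory's theorem then gives a Borel measure $m = \mu^*|_{\text{Borel}}$. It is outer regular by construction, inner regular on open sets by \ref{TM2}, and it extends $\mu$.

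\emph{Uniqueness and the remaining clauses of (c).} Uniqueness holds because any such measure is determined on open sets by its values $\mu(U)$ and then on Borel sets by outer regularity. The Radon clause is pure definition-checking: $m$ is compact-finite iff $\mu$ is. If $\mu$ is finite, complements give inner closed regularity: for a Borel set $E$, approximate $X \setminus E$ from outside by open sets and pass to complements, using $m(X) < \infty$.

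I expect the main obstacle to be the Carathéodory measurability step, since there superadditivity on disjoint compacts has to be combined carefully with \ref{TM2}.
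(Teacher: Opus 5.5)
Your proof is correct and complete. Note that the paper does not prove this theorem at all. It is quoted from \cite[Sect. 4]{Butler:DTMLC}, so there is no in-paper argument to compare against. Your cycle (a)$\Rightarrow$(b)$\Rightarrow$(c)$\Rightarrow$(a), built on the outer measure $\mu^*(E)=\inf\{\mu(U): E\subseteq U\in\mathscr{O}(X)\}$, serves as a self-contained substitute for that citation.

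One thing your route makes transparent is worth stating explicitly. The Carath\'eodory measurability of open sets uses only the deficient-topological-measure axioms, namely additivity on disjoint compacts together with \ref{TM2}. So that step holds for every deficient topological measure. Hypothesis (b) enters solely to make $\mu^*$ countably subadditive, that is, to make it an outer measure in the first place.

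A few facts you use silently should be recorded:
\begin{itemize}
\item $\mu(\emptyset)=0$. This follows from additivity on $\emptyset\sqcup\emptyset$, which forces $\mu(\emptyset)\in\{0,\infty\}$, together with the standing convention that $\mu\not\equiv\infty$.
\item Monotonicity of $\mu$ on open sets, and the bound $\mu(K)\le\mu(U)$ for compact $K\subseteq U$. Both follow directly from \ref{TM2}.
\item Compact sets are closed in a Hausdorff space, so \ref{TM3} gives $\mu^*(K)=\mu(K)$. This is what makes $m$ inner compact regular on open sets, and it reduces the Radon clause to compact-finiteness of $\mu$.
\end{itemize}

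Finally, to close the Carath\'eodory estimate, your last displayed inequality should be followed by $\mu(U)+2\epsilon\le\mu^*(E)+3\epsilon$.
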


\begin{remark} \label{tausm}
One may consult  \cite{Butler:DTMLC} for various properties of deficient topological measures on LC spaces.
A deficient topological measure is monotone, $\tau$-smooth on compact sets, and $\tau$-smooth on open sets, 
so, in particular, is additive on open sets.
If $ F \in \mathscr{C}(X)$ and $C \in \mathscr{K}(X)$ are disjoint, then $ \nu(F) + \nu(C) = \nu ( F \sqcup C)$.
A deficient topological measure $ \nu$ is also superadditive, i.e. 
if $ \bigsqcup_{t \in T} A_t \subseteq A, $  where $A_t, A \in \mathscr{O}(X) \cup \mathscr{C}(X)$,  
and at most one of the closed sets is not compact, then 
$\nu(A) \ge \sum_{t \in T } \nu(A_t)$. 
One consequence of superadditivity is this:
if $ \nu$ is a simple deficient topological measure on $X$ and $A$ is a closed or open set with $ \nu(A) =1$, then $ \nu(X \setminus A) = 0$.
If $ \nu(\{x\}) = 1$ for some $x$, then $ \nu(X \setminus \{x \}) = 0$, so $\nu$ is subadditive on open sets,  
and by Theorem \ref{subaddit} $\nu$ is the pointmass $ \delta_x$.
Unlike measures, simple (deficient) topological measures are not all the extreme (deficient) topological measures  
(see, for example, \cite{QfunctionsEtm}).
\end{remark}

\begin{remark} \label{Vloz}
Let $X$ be LC, and let $ \mathscr{M}$  be the collection of all Borel measures on $X$ that are inner regular on open sets and outer regular 
on Borel sets.  $  \mathscr{M}$ includes regular Borel measures and Radon measures. 
Denote by $M(X)$ the restrictions to $\mathscr{O}(X) \cup \mathscr{C}(X)$ of measures from $ \mathscr{M}$.
Then
 $M(X) \subsetneqq  TM(X)  \subsetneqq  DTM(X).$
The inclusions follow from the definitions. 
Information on proper inclusion
and various examples are in numerous papers, including
\cite{Aarnes:TheFirstPaper}, \cite{AarnesRustad},  \cite{QfunctionsEtm}, \cite{OrjanAlf:CostrPropQlf},   \cite {Svistula:Signed},  \cite{Svistula:DTM},
\cite{Butler:TechniqLC}, \cite{Butler:DTMLC}, and \cite{Butler:TMLCconstr}.
We also give some examples at the end of this section. 
\end{remark}

\begin{definition} \label{cqlf}
We call a  functional $\rho$ on $C_0(X)$  with values in $[ -\infty, \infty]$ (assuming at most one of $\infty, - \infty$) 
and $| \rho(0) | < \infty$ a p-conic quasi-linear functional if 
\begin{enumerate}[leftmargin=0.35in, label=(p\arabic*),ref=(p\arabic*)]
\item
$f\, g=0, \, f, g  \ge 0$ implies $ \rho(f+ g) = \rho(f) + \rho(g)$.
\item
 $0 \le g \le f$ implies $\rho(g) \le \rho(f)$.
\item
For each $f$, if $g,h \in A^+(f), \ a,b \ge 0$ then $\rho(a g + bh) = a \rho(g) + b \rho(h)$.
Here $ A^+(f) = \{ \phi \circ f: \ \phi \in C(\overline{f(X)}), \phi  \mbox{   is nondecreasing}\} $, (with $ \phi(0) = 0 $ 
if $X$ is noncompact) is a cone generated by $f$.
\end{enumerate}

\noindent
A map $\rho:C_0(X) \longrightarrow \mathbb{R}$
is a quasi-linear functional (or a positive quasi-linear functional) if 
\begin{enumerate}[leftmargin=0.25in, label=(\alph*),ref=(\alph*)]
\item \label{QIpositLC}
$ f \ge 0 \Longrightarrow \rho(f)  \ge 0.$
\item \label{QIlinLC}
For each  $f $,   if $g,h \in B(f),  \ a,b \in \mathbb{R}$ then $\rho(a g + bh) = a \rho(g) + b \rho(h)$.
Here  $B(f) =  \{ \phi \circ f : \,  \phi \in C(\overline{f(X)}) \} $  (with $ \phi(0) = 0 $ if $X$ is noncompact) is a subalgebra generated by $f$. 
\end{enumerate}
\end{definition}

\noindent
Note that $B(f)$ is the smallest closed subalgebra of $C_0(X)$ containing $f$ (resp, $f$ and constants if $X$ is compact), see \cite[L. 1.4]{Butler:QLFLC}. 

\noindent
For a functional $\rho$ on $C_0(X)$ we consider $\| \rho \| =  \sup \{ | \rho(f) | : \  \| f \| \le 1 \} $ and we say $\rho$ is bounded if $\| \rho \| < \infty$.

\begin{remark} \label{RemBRT}
Let $X$ be a LC space. 
There is an order-preserving bijection between finite deficient topological measures and bounded p-conic quasi-linear functionals.  
See \cite[Sect. 5,7,8]{Butler:ReprDTM}.
There is an order-preserving isomorphism between finite 
topological measures on $X$ and quasi-linear functionals 
of finite norm, and $\mu$ is a measure iff the 
corresponding functional is linear (see \cite[Th. 8.7]{Butler:ReprDTM}, \cite[Th. 3.9]{Alf:ReprTh}, \cite[Th. 15]{Svistula:DTM}, 
and \cite[Th. 3.9]{Butler:QLFLC}).
We outline the correspondence.
\begin{enumerate} [leftmargin=0.2in, label=(\Roman*),ref=(\Roman*)] 
\item \label{prt1}
Given a finite deficient  topological measure $\mu$ on $X$ and $f \in C_b(X)$, define functions on $\mathbb{R}$:
$$ R_1 (t) = R_{1, \mu, f} (t) =  \mu(f^{-1} ((t, \infty) )), \ \ \ \ \   R_2 (t) =  R_{2,  \mu, f} (t) =\mu(f^{-1} ([t, \infty) )). $$
Let $r=r_{f, \mu}$ be the Lebesque-Stieltjes measure associated with $-R_1$, a regular Borel measure on $ \mathbb{R}$. 
The $ supp \ r \subseteq \overline{f(X)}$.
We define a functional on $C_b(X)$ (in particular, on  $C_0(X)$):
\begin{align*} 
\mathcal{R} (f) & = \int _{\mathbb{R}}  id \, dr = \int_{[a,b]} id \, dr  =   \int_a^b R_1 (t) dt + a \mu(X)  =  \int_a^b R_2 (t) dt + a \mu(X), 
\end{align*}
where $[a,b]$ is any interval containing $f(X)$.
If $f(X) \subseteq [0,b]$ we have:
\begin{align*} 
 \mathcal{R} (f) = \int_{[0,b]}  id \, dr  =   \int_0^b R_1 (t) dt =   \int_0^b R_2 (t) dt.
\end{align*}
We say $\mathcal{R}$  is a quasi-integral (with respect to $ \mu$) and write:
\begin{align*} 
\int_X f \, d\mu = \mathcal{R}(f) = \mathcal{R}_{\mu} (f) =  \int _{\mathbb{R}}  id \, dr.
\end{align*}
If $\mu$ is a topological measure we may write $m = m_{f, \mu}$ instead of $r$.
\item  \label{RHOsvva}
Functional $\mathcal{R} $ is nonlinear. 
By \cite[L. 7.7,  Th. 7.10, L. 3.6, L. 7.12, Th. 8.7]{Butler:ReprDTM}  and  \cite[Cor. 4.10]{Butler:QLFLC} we have:
\begin{enumerate}[leftmargin=*]
\item
$\mathcal{R} (f) $ is positive-homogeneous, i.e. $\mathcal{R} (cf)  = c \mathcal{R} (f) $ for $c \ge 0$, $ f \in C_b(X)$. 
\item
$\mathcal{R} (0) =0$. 
\item 
$\mathcal{R}$ is monotone, i.e. if $ f \le g$  then $\mathcal{R} (f) \le \mathcal{R} (g)$ for $f, g \in C_b(X)$.
\item
$ \mu(X)  \cdot \inf_{x \in X} f(x)  \le \mathcal{R}(f)  \le \mu(X) \cdot \sup _{x \in X} f(x) $ for $f \in C_b(X)$. 
In particular, 
\begin{align} \label{estin}
  | \int_X f \, d\mu | \le \|f \| \mu(X).
\end{align}
\item
(orthogonal additivity) If $f g = 0$, where  $f, g \in C_b^+(X)$,  then $\mathcal{R} (f+g) = \mathcal{R} (f) + \mathcal{R} (g)$; \\
if $f g = 0 $, where $f \ge 0, g \le 0$, $f, g \in C_0(X)$, then $\mathcal{R} (f+g) = \mathcal{R} (f) + \mathcal{R} (g)$.
\item
(Lipschitz property)
if $f, g \in C_c^+(X), \, supp \, f, supp \, g \subseteq K$, $K$ is compact, then 
\begin{align} \label{LipCc} 
 | \mathcal{R}(f) - \mathcal{R}(g) |  = |  \int_X f \, d\mu - \int_X g \, d\mu | \le \| f - g \| \, \mu(K) \le \| f - g \| \, \mu(X). 
\end{align} 
\item 
If $ \mu$ is a finite topological measure,  $f, g \in C_0(X)$ then
\begin{align*} 
 \int_X f \, d\mu = \int_X f^+ \, d\mu - \int_X f^- \, d\mu, \ \  \ \ \ \ \ 
|\int_X f \, d\mu - \int_X g \, d\mu | \le  2 \| f - g \| \, \mu(X).
\end{align*}

\end{enumerate}
If $X$ is compact and  $\mu$ is a deficient topological measure, then 
$ |  \int_X f \, d\mu - \int_X g \, d\mu | \le \mu(X)  \| f - g \|. $
If $ \mu$ is a topological measure, the functional $ \mathcal{R}$ is a quasi-linear functional. 
If $ \mu$ is a deficient topological measure, the functional $ \mathcal{R}$ is a p-conic quasi-linear functional (which is enough to consider on $C_0^+(X)$).

\item \label{mrDTM} 
A functional $\rho$ with values in $[ -\infty, \infty]$ (assuming at most one of $\infty, - \infty$) and $| \rho(0) | < \infty$ 
is called a d-functional if   
on nonnegative functions it is positive-homogeneous, monotone, and orthogonally additive, i.e. for $f, g \in D(\rho)$ (the domain of $ \rho$) we have: 
(d1) $f \ge 0, \ a > 0  \Longrightarrow  \rho (a f) = a \rho(f)$; 
(d2) $0 \le  g \le f \Longrightarrow  \rho(g) \le \rho(f) $;
(d3) $f \cdot g = 0, f,g \ge 0  \Longrightarrow  \rho(f + g) = \rho(f) + \rho(g)$. 

Let  $\rho$ be a d-functional with $  C_c^+(X) \subseteq D(\rho) \subseteq C_b(X)$. 
In particular, we may take a bounded (p-conic) quasi-linear functional $ \mathcal{R}$ on $  C_0^+(X)$. 
The corresponding
deficient topological measure $ \mu = \mu_{\rho}$ is given as follows: 

If $U$ is open, $ \mu_{\rho}(U) = \sup\{ \rho(f): \  f \in C_c(X), 0\le f \le 1,  supp \, f\subseteq U  \}$,

if $F$ is closed, $ \mu_{\rho}(F) = \inf \{ \mu_{\rho}(U): \  F \subseteq U,  U \in \mathscr{O}(X) \}$, 

if $K$ is compact, $ \mu_{\rho}(K) = \inf \{ \rho(g): \   g \in C_c(X), g \ge 1_K \}  
= \inf \{ \rho(g): \   g \in C_c(X), 1_K \le g \le 1 \}. $
\end{enumerate}
If $\| \rho\| < \infty$ then the corresponding deficient topological measures is finite.  
If given a finite deficient topological measure $\mu$, we obtain $ \mathcal R$, and then $\mu_{ \mathcal R}$, then $ \mu = \mu_{ \mathcal R}$.
A bounded (p-conic) quasi-linear functional $ \mathcal{R}$  has the form given in part \ref{prt1} with $ \mu = \mu_{\mathcal{R}}$. 

For finite deficient topological measures $ \mu, \nu$
\begin{align} \label{EqByInt}
\int f \, d\mu = \int f \, d\nu  \mbox{    for all    }   f \in C_c^+(X)  \ \ \  \Longrightarrow \ \ \  \mu = \nu.
\end{align}
\end{remark}

We would like to give some examples. 

\begin{definition}
A set $A \subseteq X$ is called precomact if $\overline A$ is compact. 
If $X$ is LC, noncompact, a set $A$ is solid if $A$ is  connected, and none of the connected components of  $X \setminus A$ is precompact. 
If $X$ is compact, a set $A$ is solid if $A$ and $X \setminus A$ are connected.
\end{definition}

Typically, in papers on (deficient) topological measures and quasi-integrals, the term "bounded" is used instead of "precompact".
Here we use the word "precompact" to avoid confusion with bounded sets in metric spaces.

Let  $\mathscr{K}_{s}(X)$ and $\mathscr{O}_s^*(X)$ denote the collections of compact solid sets and precompact open solid sets in $X$, respectively.  
Let  $ \mathscr{A}_{s}^{*}(X) = \mathscr{K}_{s}(X) \cup \mathscr{O}_s^*(X)$.
 
\begin{definition} \label{DeSSFLC}
A function $ \lambda: \mathscr{A}_{s}^{*}(X) \rightarrow [0, \infty) $ is a solid-set function on $X$ if
\begin{enumerate}[label=(s\arabic*),ref=(s\arabic*)]
\item \label{superadd}
$ \sum_{i=1}^n \lambda(C_i) \le \lambda(C)$ whenever $\bigsqcup_{i=1}^n C_i \subseteq C,  \ \  C, C_i \in \mathscr{K}_{s}(X)$; 
\item \label{regul}
$ \lambda(U) = \sup \{ \lambda(K): \ K \subseteq U , \ K \in \mathscr{K}_{s}(X) \}$ for $U \in \mathscr{O}_{s}^{*}(X)$; 
\item \label{regulo}
$ \lambda(K) = \inf \{ \lambda(U) : \  K \subseteq U, \ U \in \mathscr{O}_{s}^{*}(X) \}$ for $ K  \in \mathscr{K}_{s}(X)$; 
\item  \label{solidparti}
$ \lambda(A) = \sum_{i=1}^n \lambda (A_i)$ whenever $A = \bigsqcup_{i=1}^n A_i, \ \ A , A_i  \in \mathscr{A}_{s}^{*}(X)$.
\end{enumerate}
\end{definition}

\begin{remark}
Many examples of topological measures that are not measures are obtained in the following way. Define a solid-set function on 
a LCH connected locally connected space. A solid set function extends to a unique topological measure. 
See \cite[Def. 2.3, Th. 5.1]{Aarnes:ConstructionPaper}, \cite[Def. 6.1, Th. 10.7]{Butler:TMLCconstr}.
This method is very convenient when $X$ is a compact Hausdorff connected locally connected space with genus 0 
(i.e. $X$ can not be a disjoint union of more than two nonempty solid sets), and such a space is called a q-space.
For a q-space, in part \ref{solidparti} of Definition \ref{DeSSFLC}  (typically, the hardest to verify) one only needs to check that
$\lambda(X) = \lambda(A) + \lambda(X \setminus A)$ for a solid set $A$. 
For a noncompact LCH connected locally connected space whose one-point compactification has genus 0 
(we shall call such a space a noncompact q-space)
this method is even simpler, for  part \ref{solidparti} of Definition \ref{DeSSFLC} holds automatically by \cite[L. 15.2]{Butler:TMLCconstr}.
For more information about solid sets, solid-set functions, equivalent definitions of a solid-set function and genus
see \cite[Rem. 6.3, Sect. 11, Sect. 12, L. 15.2]{Butler:TMLCconstr}.
\end{remark}

\begin{example} \label{ExDan2pt}
Suppose that  $ \lambda$ is the Lebesgue measure on $X = \mathbb{R}^2$,  and the set $P$ consists of points
$p_1 = (0,0)$ and $p_2 = (2,0)$.
For each precompact open solid or compact solid set $A$ let $ \nu(A) = 0$ if $A \cap P = \emptyset$,   
$ \nu(A) = \lambda(A) $ if $A$ contains one point from $P$, and 
$ \nu(A) = 2 \lambda(X)$ if $A$ contains both points from $P$.
Then $\nu$ is a solid-set function (see \cite[Ex. 15.5]{Butler:TMLCconstr}), and  $\nu$ extends to a unique topological measure on $X$. 
Let $K_i$ be the closed ball of radius $1$ centered at $p_i$ for $i=1,2$. Then 
$K_1, K_2$ and $ C= K_1 \cup K_2$ are compact solid sets, $\nu(K_1) = \nu(K_2) = \pi, \,  \nu(C) = 4 \pi$. Since 
$\nu$ is not subadditive, it is not a measure.  The quasi-linear functional corresponding to $ \nu$ is not linear. 
\end{example}

\begin{example} \label{nvssf}
Let  $X = \mathbb{R}^2$ or a square, $n$ be a natural number, and let $P$ be a set of distinct $2n+1$ points.
For each $A  \in \mathscr{A}_{s}^{*}(X)$ let $ \nu(A) = i/n$ if $ A$ contains  $2i$ or $2i+1$ points from $P$.
Then $ \nu$ is a solid-set function, and it extends to a unique topological measure on $X$ 
that assumes values $0, 1/n, \ldots, 1$. 
See  \cite[Ex. 2.1]{Aarnes:Pure},  \cite[Ex. 2.5]{AarnesRustad}, \cite[Ex. 4.14, 4.15]{QfunctionsEtm}, 
and \cite[Ex. 15.9]{Butler:TMLCconstr}.
This topological measure is not subadditive. 
(When $X$ is the square and $n=3$, it is easy to represent $X = A_1 \cup A_2 \cup A_3$,
where each $A_i$  is a closed solid set containing one point from $P$. Then $\nu(A_i) =0$ for $i=1,2,3$, while $\nu(X) = 1$.) 
Since $\nu$ is not subbadditive, it is not a measure, and the corresponding quasi-linear functional  $\rho$ is not linear. 
If we take $P =\{p, p, t \} $ or $ \{ p, p, p \}$ then the resulting topological measure is  
the point mass at $p$.
In \cite[Ex. 4.13]{Butler:QLFLC} for $n=5$ we show that there are $f,g \ge 0$ such that $ \rho(f+g) \neq \rho(f) + \rho(g)$. 
If $X$ is LC, noncompact, for the functional $\rho$ we consider
a new functional $ \rho_g$ defined by $\rho_g(f) = \rho(gf)$, where $g \ge 0$. 
The functional $\rho_g$ corresponds to a deficient topological measure 
obtained by integrating $g$ over closed and open sets with respect to $\nu$. We can choose $ g \ge 0$ or $ g >0$ so that  
$\rho_g$ is no longer linear on singly generated subalgebras, but only linear on singly generated cones. 
See \cite[Ex. 35, Th. 43]{Butler:Integration} for details.
\end{example}

\begin{example}[Aarnes circle topological measure] \label{Aatm}
Let $X$ be the unit disk in $ \mathbb{R}^2$ and $B$ be the boundary of $X.$
Fix a point $p$ in the interior of the circle.
Define $\mu $ on solid sets as follows:
$\mu (A) = 1$ if i) $B \subset A$ or
ii) $ p \in A $ and $A \cap B \ne  \emptyset$.
Otherwise $ \mu(A) = 0 $.
Then $ \mu $ is a solid-set function, and it extends to a simple topological measure on $X$.
Let $A_1$ be a closed solid set which is an arc that is a proper subset of $B$, 
$A_2$ be a closed solid set that is the closure of $B \setminus A_1$, 
and $A_3 = X \setminus B$ be
an open solid subset of $X$. Then 
$X =  A_1 \cup A_2 \cup A_3, \  \mu(X) = 1 $, but 
$ \  \mu (A_1) + \mu(A_2) + \mu(A_3) = 0$.
Since $ \mu$ is not subadditive,  it is not a measure, i.e. $\mu$ is simple, but not a point mass. 
When $X = \mathbb{R}^2$, given a closed ball $B(p, \epsilon)$ we may define a set function $\mu_{p, \epsilon}$ 
on precompact open solid and compact solid sets in 
the same way as above. Using \cite[Def. 6.1, Th. 3.10]{Butler:TMLCconstr} it is easy to see that  $\mu_{p, \epsilon}$ is a solid-set function, 
so it gives a topological 
measure on $\mathbb{R}^2$. 
\end{example}

\begin{example} \label{basicExDTM}
Let $X$ be locally compact, and let $D$ be a connected compact subset of $X$. Define a set function 
$\nu$ on $\mathscr{O}(X) \cup \mathscr{C}(X)$  by setting $\nu(A) = 1$ if $ D \subseteq A$ and $\nu(A) = 0$ otherwise.
If $D$ has more than one element, then $\nu$ is a deficient topological measure, but not 
a topological measure. See \cite[Ex. 6.1]{Butler:DTMLC} and \cite[Ex. 1, p.729]{Svistula:DTM} for details.
\end{example}

\noindent
More examples of topological measures and quasi-integrals on locally compact spaces
are in \cite{Butler:TechniqLC}, \cite{Butler:QLFLC}, and \cite[Sect. 15]{Butler:TMLCconstr};
more examples of deficient topological measures are in  \cite{Butler:DTMLC} and \cite{Svistula:DTM}. 

\begin{definition} \label{defwk}
The weak topology on $ \mathbf{DTM}(X)$ is the coarsest (weakest) topology for which maps 
$ \mu \longmapsto \mathcal{R}_{\mu} (f), f \in C_0^+(X) $ are continuous.
\end{definition}

\begin{remark} \label{rmwkbase}
The basic neighborhoods for the weak topology have the form 
$$N(\nu, f_1, \ldots, f_n, \epsilon) = \{ \mu \in \mathbf{DTM}(X): \ |\mathcal{R}_{\mu}(f_i) - \mathcal{R}_{\nu} (f_i) | < \epsilon, \, f_i \in C_0(X), \\
i=1, \ldots, n  \notag \}. $$
In the above line we may also take $f_i \in C_0^+(X)$ (see \cite[Sect. 2]{Butler:QLM}).  
Other ways to describe the weak topology are in \cite[Sect. 2]{Butler:WkConv},  \cite[Sect. 3]{Butler:QLM}.

Our definition of weak convergence corresponds to one used in probability theory. It is the same as a functional analytical definition of  $wk^*$ convergence
on $\mathbf{DTM}(X)$ (respectively, on $\mathbf{TM}(X)$), which is justified by the fact that this topology agrees with the weak$^*$ topology 
induced by p-conic quasi-linear functionals (resp., quasi-linear functionals). 
In many papers the term "$wk^*$-topology" is used. 
\end{remark}

\begin{remark}
By  \cite[Th. 8.7]{Butler:ReprDTM},  $ \mathbf{DTM}(X)$ (respectively, $ \mathbf{TM}(X)$)  is homeomorphic 
to the space of bounded p-conic quasi-linear (resp.,  bounded quasi-linear) functionals  endowed with pointwise convergence. 
$ \mathbf{DTM}(X)$ is a topological vector space, and  by Theorem \cite[Th. 2.4]{Butler:WkConv}  $ \mathbf{DTM}(X)$ is Hausdorff.
\end{remark}

\begin{definition} \label{unifbdv}
Let $X$ be locally compact.
A family $\mathcal{M}  \subseteq \mathbf{DTM}(X)$ is uniformly bounded in variation if there is a positive constant $M$ such that $ \| \mu \| \le M$ 
for each $\mu \in \mathcal{M} $. 
\end{definition}

\begin{remark} \label{McompT2} 
By Theorem \cite[Th. 2.4]{Butler:WkConv},  a uniformly bounded in variation family of (deficient) topological measures (with weak topology) is a 
compact Hausdorff space.
\end{remark}

The remaining results and examples in this section are from \cite[Sect. 3, Sect. 5]{Butler:QLM}.

\begin{definition} \label{IT}
Let $X$ and $Y$ be LC spaces.
A map $q: \mathscr{O}(X) \cup \mathscr{K}(X) \longrightarrow \mathscr{O}(Y)  \cup  \mathscr{K}(Y)$ such that
\begin{enumerate}[label=(IT\arabic*),ref=(IT\arabic*)]
\item \label{IT1}
$q(U) \in \mathscr{O}(Y)$  for $ U  \in \mathscr{O}(X)$, and $ q(K) \in  \mathscr{K}(Y)$ for $ K \in\mathscr{K}(X)$;
\item \label{IT3}
$q(U) =  \bigcup \{q(K): K \subseteq U, K \in \mathscr{K}(X) \}$ for $ U \in \mathscr{O}(X)$; 
\item \label{IT4}
$ q(K) = \bigcap \{ q(U): K \subseteq U, U \in \mathscr{O}(X) \}$ for $K \in \mathscr{K}(X)$;
\item \label{IT2} 
$q( A \sqcup B)  = q(A) \sqcup q(B)$;   
\end{enumerate}
is called an image transformation (from X to Y) if  $A, B, A\sqcup B \in \mathscr{O}(X) \cup \mathscr{K}(X)$; 
$q$ is called a d-image transformation if $A, B \in \mathscr{K}(X)$.
If $X = Y$ we call $q$ a (d-) image transformation on $X$.
\end{definition}

\begin{lemma} \label{ITsvva}
Let $X$ and $Y$ be locally compact spaces.
Suppose $q: \mathscr{O}(X) \cup \mathscr{K}(X) \longrightarrow \mathscr{O}(Y)  \cup  \mathscr{K}(Y)$ is a (d-) image transformation. 
The following holds:  
\begin{enumerate}[leftmargin=0.25in, label=(\roman*),ref=(\roman*)]
\item \label{ITsv0}
$q(\emptyset) = 0$.
\item \label{ITsv1}
$q$ is  monotone, i.e. if $ A \subseteq B$, $A, B \in  \mathscr{O}(X) \cup \mathscr{K}(X) $  then $ q(A) \subseteq q(B)$.
\item \label{ITsv2}
If $ K \subseteq q(U), K \in  \mathscr{K}(Y), U \in \mathscr{O}(X)$ then there exists $ C \in \mathscr{K}(X)$ 
such that $ C \subseteq U$ and $ K \subseteq q(C) \subseteq q(U)$.
\item \label{ITsv3}
If a net $ U_{t} \nearrow U $, where $U_t, U$ are open sets, then $q(U_{t}) \nearrow q(U)$.
\item \label{ITsv4}
If a net $K_{\alpha} \searrow K$, where $ K_{\alpha},  K$ are compact sets, then $q(K_{\alpha}) \searrow q(K)$.
\item \label{ITsv5}
If  $q(K) \subseteq W, K \in \mathscr{K}(X), W \in  \mathscr{O}(Y)$ then there exists $U \in \mathscr{O}(X)$ 
such that $ K \subseteq U$ and $ q(K) \subseteq q(U) \subseteq q(\overline U) \subseteq W$.
\end{enumerate}
\end{lemma}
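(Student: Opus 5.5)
We check the six items directly from \ref{IT1}--\ref{IT2}, in the order stated. Each later item uses the earlier ones, and compactness arguments in the locally compact Hausdorff space $X$ do most of the work. (In \ref{ITsv0} the intended meaning is $q(\emptyset)=\emptyset$.)

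For \ref{ITsv0}, note that $\emptyset$ is both open and compact and $\emptyset=\emptyset\sqcup\emptyset$ with both pieces compact. So \ref{IT2} applies for either kind of transformation and gives $q(\emptyset)=q(\emptyset)\sqcup q(\emptyset)$. A set that is disjoint from itself is empty, so $q(\emptyset)=\emptyset$.

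For \ref{ITsv1} we split into cases by the types of $A\subseteq B$.
\begin{itemize}
\item Compact $K\subseteq$ open $U$: $q(K)\subseteq q(U)$ is immediate from \ref{IT3}.
\item Open $U\subseteq$ open $V$: every compact subset of $U$ is a compact subset of $V$, so the union in \ref{IT3} for $U$ is part of the union for $V$.
\item Compact $C\subseteq$ compact $K$: every open set containing $K$ contains $C$. Hence the intersection in \ref{IT4} defining $q(C)$ runs over a larger family and $q(C)\subseteq q(K)$.
\item Open $U\subseteq$ compact $K$: write $q(U)=\bigcup q(C)$ over compact $C\subseteq U$ and apply the compact--compact case to each $C\subseteq K$.
\end{itemize}

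For \ref{ITsv2}, let $K\subseteq q(U)=\bigcup\{q(C): C\subseteq U,\ C\in\mathscr{K}(X)\}$. The sets $q(C)$ need not be open, so we fatten them. By local compactness, for each such $C$ choose an open $V_C$ with compact closure and $C\subseteq V_C\subseteq\overline{V_C}\subseteq U$. By \ref{IT1} each $q(V_C)$ is open, and by \ref{ITsv1} $q(C)\subseteq q(V_C)$, so the $q(V_C)$ cover $K$. Compactness of $K$ gives finitely many $V_{C_1},\dots,V_{C_n}$ whose images cover $K$. Put $C=\bigcup_i\overline{V_{C_i}}$, which is compact and contained in $U$. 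Monotonicity then gives $K\subseteq q(C)\subseteq q(U)$.

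For \ref{ITsv3}, the net $q(U_t)$ is increasing by \ref{ITsv1} and is contained in $q(U)$. For the reverse inclusion, take a compact $C\subseteq U$. Since the $U_t$ are directed, finitely many cover $C$, so $C\subseteq U_t$ for some $t$ and $q(C)\subseteq q(U_t)$. Taking the union over all such $C$ and using \ref{IT3} gives $q(U)\subseteq\bigcup_t q(U_t)$.

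For \ref{ITsv4}, monotonicity gives that $q(K_\alpha)$ decreases and $q(K)\subseteq\bigcap_\alpha q(K_\alpha)$. Conversely, let $U\supseteq K$ be open. The sets $K_\alpha\setminus U$ are compact, decrease, and have empty intersection, so one of them is empty; that is, $K_\alpha\subseteq U$ for some $\alpha$. Then $\bigcap_\alpha q(K_\alpha)\subseteq q(U)$ for every such $U$, and \ref{IT4} yields $\bigcap_\alpha q(K_\alpha)\subseteq q(K)$.

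For \ref{ITsv5}, which is the only step needing a small construction, consider all open $U\supseteq K$ with compact closure.
\begin{itemize}
\item Their closures form a decreasing net, directed by $U_1\cap U_2$.
\item Since $X$ is locally compact Hausdorff, every point outside $K$ can be separated from $K$ by such a $U$. Hence $\bigcap\overline U=K$, so $\overline U\searrow K$.
\item By \ref{ITsv4}, $q(\overline U)\searrow q(K)\subseteq W$. These are compact sets decreasing to a subset of the open set $W$, so the argument of \ref{ITsv4} gives some $U$ with $q(\overline U)\subseteq W$.
\item Monotonicity gives $q(K)\subseteq q(U)\subseteq q(\overline U)$, which completes the chain.
\end{itemize}
The main point to verify carefully is the identity $\bigcap\overline U=K$ for this net, which is where local compactness and the Hausdorff property enter.
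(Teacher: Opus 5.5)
Your proof is correct: each item follows as you describe from (IT1)--(IT4), with monotonicity derived case by case from (IT3)--(IT4) alone (so it also holds for d-image transformations), and (vi) deduced from (v) applied to the net of closures of precompact open neighbourhoods of $K$. The paper gives no proof of this lemma and instead imports it from \cite{Butler:QLM}, so there is nothing in the paper to compare your route against; your argument is the natural direct verification from the axioms.
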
 

\noindent
Let  $X^{\flat}$ (resp.,  $X^\sharp$) denote the set of all simple topological (resp., deficient topological) measures on $X$. 
For $ A \subseteq X$, let $A^\flat = \{ \mu \in X^{\flat}: \mu(A) = 1\}$, and $A^\sharp = \{ \mu \in X^{\sharp}: \mu(A) = 1\}$. 
For a LC space $X$, spaces $X^{\flat}$,  $X^\sharp$, $K^\flat$, and $K^\sharp$ (for any compact $K$) 
are compact Hausdorff. 

\begin{example} \label{staIT}
Let $X$ be LC. For $A \in \mathscr{O}(X) \cup \mathscr{K}(X)$ let $\Lambda(A) = A^{\flat}$ (respectively,  $\Lambda(A) = A^\sharp$).
Then  $\Lambda$ is a d-image transformation from $X$ to $X^{\flat}$ (resp., an image transformation from $X$ to $X^\sharp$). 
\end{example}

\begin{definition}  \label{q*}
Let $q$ be an image transformation (respectively, a d-image transformation)  from $X$ to $Y$. The adjoint map $q^* : TM(Y) \rightarrow TM(X)$
(resp.,   $q^* : DTM(Y) \rightarrow DTM(X)$)  is given by $q^*\nu = \nu \circ q$, i.e $q^*\nu(A) = \nu(q(A))$ for $ A \in \mathscr{O}(X) \cup \mathscr{K}(X)$.
\end{definition}

\noindent
It is easy to verify that $(p \circ q)^* = q^* \circ p^*$ for (d-) image transformations $p$ and $q$.

\begin{definition}
A function between topological spaces is proper if inverse images of compact subsets are compact.
\end{definition}

\begin{example}  \label{invfunIT}
Let $u :Y  \longrightarrow X$ be a continuous proper function.  
If $Y$ is compact, then any continuous $u :Y  \longrightarrow X$ is proper.    
Let $q(A) = u^{-1}(A)$ for $A \in \mathscr{O}(X) \cup \mathscr{K}(X)$. 
Then $q$ is a (d-) image transformation.
A (deficient) topological measure $ \nu$ on $Y$ 
induces a (deficient) topological measure $\mu = \nu \circ u^{-1}: A \mapsto \nu(u^{-1}(A))$ on $X$ (see 
\cite[Pr. 5.1]{Butler:DTMLC},  \cite[Ex. 14]{AarnesJohansenRustad}.)
\end{example}

\begin{theorem} \label{adjCont}
Suppose $X,Y$ are LC and $q$ is a d-image transformation (respectively, an image transformation) from $X$ to $Y$.
Then the adjoint map $q^* : (DTM(Y), wk) \rightarrow (DTM(X), wk) $ (resp.,  $q^* : (TM(Y), wk) \rightarrow (TM(X), wk) $) 
preserves finite linear combinations and  
is continuous (i.e. $ \mu_t \Longrightarrow \mu$ implies $q^* \mu_t \Longrightarrow  q^*\mu$).
In particular,  $q^* :Y^{\flat} \rightarrow X^{\flat}$ (resp.,  $q^* :Y^\sharp \rightarrow X^\sharp$)  is continuous.
If $y \in q(\{ x\})$ then $q^*(\delta_y) = \delta_x$.
\end{theorem}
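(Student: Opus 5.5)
The plan is to handle the four claims of Theorem \ref{adjCont} separately, in the following order.

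\emph{Step 1: $q^*$ maps into the right space and preserves finite linear combinations.} Let $\nu$ be a finite deficient topological measure on $Y$. Define $q^*\nu(A)=\nu(q(A))$ for open and compact $A$. I will check that this data determines a finite deficient topological measure on $X$.
\begin{itemize}
\item Finite additivity on disjoint compact sets follows from \ref{IT2} and the additivity of $\nu$ on disjoint compacts.
\item In the image transformation case, one also gets additivity when $A,B,A\sqcup B\in\mathscr{O}(X)\cup\mathscr{K}(X)$. Here \ref{IT2} gives a disjoint union in $Y$ of sets of the same type, so \ref{TM1} for $\nu$ applies.
\item Inner regularity on open sets: by \ref{IT3} and Lemma \ref{ITsvva}\ref{ITsv2}, for a compact $K'\subseteq q(U)$ there is a compact $C\subseteq U$ with $K'\subseteq q(C)$. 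Inner regularity of $\nu$ on $q(U)$ then gives $\nu(q(U))=\sup_C \nu(q(C))$.
\item Outer regularity on compact sets: for compact $K$, Lemma \ref{ITsvva}\ref{ITsv5} shows that every open $W\supseteq q(K)$ contains some $q(U)$ with $U\supseteq K$. Outer regularity of $\nu$ then gives $\nu(q(K))=\inf_U\nu(q(U))$.
\end{itemize}
Set functions defined on $\mathscr{O}(X)\cup\mathscr{K}(X)$ with these properties extend uniquely to closed sets via \ref{TM3}; I will take this as standard from \cite{Butler:DTMLC}. Linearity, meaning $q^*(a\nu+b\mu)=aq^*\nu+bq^*\mu$ for $a,b\ge 0$, is immediate on open and compact sets, and sets of these types determine the measure.

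\emph{Step 2: continuity, the main obstacle.} Suppose $\mu_t\Rightarrow\mu$. I need $\mathcal{R}_{q^*\mu_t}(f)\to\mathcal{R}_{q^*\mu}(f)$ for $f\in C_0^+(X)$.

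The direct route is to use the quasi-homomorphism $\theta$ associated with $q$ from \cite{Butler:QLM}. This should satisfy $\int_X f\,d(q^*\nu)=\int_Y \theta(f)\,d\nu$ with $\theta(f)\in C_0^+(Y)$. Continuity then follows at once from the definition of the weak topology.

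If I must avoid $\theta$, I will argue through the layer-cake formula $\mathcal{R}(f)=\int_0^b\nu(q(f^{-1}((s,\infty))))\,ds$ and work with the functions $R_1$. The plan is:
\begin{itemize}
\item Use the weak-convergence characterization from \cite{Butler:WkConv}: $\liminf\mu_t(V)\ge\mu(V)$ for open $V$ and $\limsup\mu_t(K)\le\mu(K)$ for compact $K$, with convergence of total masses.
\item Get convergence of $R_{1,q^*\mu_t,f}(s)$ at all but countably many $s$. This uses the sandwich $q(f^{-1}([s',\infty)))\subseteq q(f^{-1}((s,\infty)))$ for $s'>s$, together with monotonicity from Lemma \ref{ITsvva}.
\item Conclude by dominated convergence, since all the masses are bounded eventually.
\end{itemize}
The delicate point is the continuity points of $R_1$ for the limit measure. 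I expect this is where Lemma \ref{ITsvva}\ref{ITsv3}, \ref{ITsv4} are needed.

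\emph{Step 3: simple measures.} Restriction to simple measures is clear. If $\nu$ takes only the values $0$ and $1$, so does $\nu\circ q$, so $q^*:Y^\flat\to X^\flat$ (resp.\ $Y^\sharp\to X^\sharp$) is well defined. It is continuous as a restriction of a continuous map.

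\emph{Step 4: point masses.} Suppose $y\in q(\{x\})$. For any open $U\ni x$, monotonicity gives $y\in q(U)$, so $q^*\delta_y(U)=1$, and likewise $q^*\delta_y(\{x\})=1$. Thus $q^*\delta_y$ is a simple deficient topological measure with $q^*\delta_y(\{x\})=1$. By Remark \ref{tausm} it equals $\delta_x$.
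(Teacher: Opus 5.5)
Steps 1, 3 and 4 are sound. The gap is in Step 2, the only substantive claim.

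\textbf{The main route is circular.} Granting Theorem \ref{ITqh}\ref{ITqh5} with $\theta(f)\in C_0^+(Y)$, your one-line deduction is right. But that theorem is quoted after this one and presupposes it. It is a correspondence with \emph{continuous} maps $w(y)=q^*\delta_y$, and by \ref{ITqh1} $\theta(f)(y)=\int_X f\,d(q^*\delta_y)$. So knowing that every $\theta(f)$ is continuous is exactly knowing that $q^*$ is continuous on point masses. Used to prove the present theorem, this is circular.

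\textbf{The fallback does not repair it.} Three points fail:
\begin{enumerate}
\item Weak convergence on a locally compact space does not force convergence of total masses: $\delta_{x_n}\Rightarrow 0$ when $x_n$ leaves every compact set.
\item Dominated convergence is not available along nets.
\item The step you call delicate is left open.
\end{enumerate}
You also use the criterion from \cite{Butler:WkConv} only to extract information about $\mu_t$. Applied directly to $q^*\mu_t$, it gives $\liminf q^*\mu_t(U)=\liminf\mu_t(q(U))\ge q^*\mu(U)$ and $\limsup q^*\mu_t(K)\le q^*\mu(K)$ at once, since $q$ sends open sets to open sets and compacts to compacts. However, that sufficiency direction needs eventually bounded masses. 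For example, $n\delta_n$ on $\mathbb{R}$ satisfies both inequalities with limit $0$, yet $\int f\,d(n\delta_n)=n/\log(2+n)\to\infty$ for $f(x)=1/\log(2+|x|)$. So it settles only bounded nets.

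\textbf{The missing idea is to build $\theta(f)$ directly, without $q^*$.} For $f\in C_0^+(X)$, let $U_s=f^{-1}((s,\infty))$ and
\[
g(y)=\sup\{s>0:\ y\in q(U_s)\},\qquad \sup\emptyset=0 .
\]
Since $q$ is monotone, $\{g>t\}=\bigcup_{s>t}q(U_s)$ is open for $t\ge 0$. For $t>0$,
\[
\{g\ge t\}=\bigcap_{0<s<t}q(U_s)=\bigcap_{0<s<t}q\bigl(f^{-1}([s,\infty))\bigr)
\]
is an intersection of compact sets by \ref{IT1}, hence compact. Thus $g\in C_0^+(Y)$ and $g\le\|f\|$.

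Now take $\nu\in DTM(Y)$. By $\tau$-smoothness on open sets (Remark \ref{tausm}), $\nu(\{g>s\})=\lim_{s'\downarrow s}\nu(q(U_{s'}))$. This coincides with the nonincreasing function $s\mapsto\nu(q(U_s))=q^*\nu(U_s)$ except at countably many $s$. The layer-cake formula of Remark \ref{RemBRT}\ref{prt1} then gives $\int_X f\,d(q^*\nu)=\int_Y g\,d\nu$ for every $\nu$. Continuity of $q^*$ for arbitrary, not necessarily bounded, nets follows immediately from Definition \ref{defwk}.
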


\begin{definition} \label{qhDef}
Suppose $X,Y$  are LC and $E$ is a vector space over $\mathbb{R}$ equipped with a norm or an extended norm. 
A map $\theta: C_0(X) \longrightarrow C_0(Y) $ is called a quasi-homomorphism if it is positive (i.e. $ \theta(f) \ge 0$ for each $ f \ge 0$),
linear and multiplicative  and  on each subalgebra $B(f)$, $f \in C_0(X)$.
$\theta: C_0(X) \longrightarrow C_0(Y) $ is called a conic quasi-homomorphism if 
(i) $0 \le g \le f$ implies $\theta(g) \le \theta(f)$;  (ii) $f\, g=0, \, f, g  \ge 0$ implies $ \theta(f+ g) = \theta(f) + \theta(g)$; 
(iii) $\theta(a g + bh) = a \theta(g) + b \theta(h)$ for $g,h \in A^+(f), \ a,b \ge 0$ (for each $f \in C_0(X)$);  
(iv) $\theta$ is multiplicative on each cone $A^+(f)$.
\end{definition} 

The next theorem is a part of a theorem in \cite[Sect. 5]{Butler:QLM}.

\begin{theorem} \label{ITqh}
Suppose $X$ and $Y$  are LC. There is a 1-1 correspondence between d-image transformations (respectively, image transformations) 
$q$ from $X$ to $Y$ and 
conic quasi-homomorphisms $\theta : C_0^+(X) \longrightarrow C_0^+(Y)$ 
(resp., quasi-homomorphisms $\theta : C_0(X) \longrightarrow C_0(Y)$),
and there is a 1-1 correspondence between d-image transformations (resp., image transformations) 
$q$ and continuous k-proper functions $w: Y \longrightarrow X^{\flat}$ (resp.,   $w: Y \longrightarrow X^\sharp$).
Moreover,
\begin{enumerate}[leftmargin=0.25in, label=(h\arabic*),ref=(h\arabic*)]
\item \label{ITqh1}
$\theta(f)(y) = \int_X f\, dw_y =  \int_X f \, d(q^*\delta_y)$ (where $w_y = w(y)$).
\item \label{ITqh6}
$ \| \theta(f) \| \le \| f\|$. 
\item \label{ITqh6a}
If $ f, g \in C_c^+(X)$ then $ \| \theta(f) - \theta(g) \| \le \| f -g\|$, and if $X$ is compact, then 
$ \| \theta(f) - \theta(g) \| \le \| f -g\|$ for any $f,g \in C(X)$. 
For a quasi-homomorphism, $\| \theta(f) - \theta(g) \| \le 2 \| f -g\|$. 
\item \label{ITqh4}
For an image transformation $q$ and an open set $ A \subseteq \mathbb{R}$ or a closed set $A \subseteq \mathbb{R} \setminus \{ 0\}$ 
\begin{align} \label{qthetaf}
q(f^{-1}(A)) = (\theta(f))^{-1} (A),
\end{align}
and for compact $X$ this holds for any open or closed $ A \subseteq \mathbb{R}$. For a d-image transformation $q$ equality (\ref{qthetaf}) holds for
$A=[t, \infty)$ and $A=(t, \infty)$ for  almost every $t$ .
\item \label{ITqh12}
$q(X) = Y$.
\item \label{ITqh15}
$(q(E))^{\flat}  = (q^*)^{-1}(E^{\flat})$ (resp., $(q(E))^{\sharp}  = (q^*)^{-1}(E^{\sharp})$ for a compact or an open set $E \subseteq X$.
\item \label{ITqh5}
$ \int_X f \, dq^*\nu = \int_Y \theta(f) \, d\nu \  $.
\end{enumerate}
\end{theorem}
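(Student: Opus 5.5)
The plan is to route all correspondences through the map $w\colon y \mapsto q^*\delta_y$, and to reduce every claim to level-set identities for simple (deficient) topological measures.

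\textbf{Step 1 (from $q$ to $w$).} Given a (d-) image transformation $q$, set $w_y = q^*\delta_y = \delta_y\circ q$. This set function takes only the values $0$ and $1$. By Definition \ref{q*} and Theorem \ref{adjCont} it is a simple (deficient) topological measure, so $w\colon Y\to X^{\flat}$ (resp.\ $X^\sharp$). The map $y\mapsto\delta_y$ is weakly continuous, and $q^*$ is continuous by Theorem \ref{adjCont}, so $w$ is continuous. The set $\{y: w_y(A)=1\}$ equals $q(A)$, which gives (h\ref{ITqh15}) directly. For $K$ compact, $(q(K))$ is compact, which gives k-properness of $w$. Also $w_y(X)=\delta_y(q(X))$, and $w_y$ must be a nonzero element of $X^\flat$, which forces $q(X)=Y$, i.e.\ (h\ref{ITqh12}).

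\textbf{Step 2 (from $w$ to $\theta$).} Define $\theta(f)(y)=\int_X f\,dw_y$, which is (h\ref{ITqh1}). The key lemma concerns a simple $\mu$ and $f\in C_0^+(X)$. In that case $R_{1,\mu,f}$ is a $\{0,1\}$-valued nonincreasing step function, so $\int f\,d\mu$ is the jump point $t_0$. Hence $\int f\,d\mu>t$ iff $\mu(f^{-1}((t,\infty)))=1$.

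Applying this to $\mu=w_y$ gives
$$(\theta f)^{-1}((t,\infty)) = q(f^{-1}((t,\infty))),$$
and similarly for closed sets. This is (h\ref{ITqh4}). In the d-case it holds only for $t$ off the countably many jump discontinuities of $t\mapsto q(f^{-1}(\cdot))$, which explains the "almost every $t$" qualifier.

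From (h\ref{ITqh4}) several properties follow:
\begin{itemize}
\item $\theta(f)$ is continuous, by continuity of $w$ and weak continuity.
\item $\theta(f)\in C_0(Y)$, since $\{|\theta f|\ge\varepsilon\}=q(f^{-1}([\varepsilon,\infty)))$ (plus the negative part) is compact.
\item For simple $\mu$ we have $\int\phi\circ f\,d\mu=\phi(\int f\,d\mu)$ for (nondecreasing) $\phi$, because $\mu\circ f^{-1}$ is a point mass on $\mathbb R$. This yields linearity and multiplicativity on $B(f)$ (resp.\ on $A^+(f)$).
\item Monotonicity and orthogonal additivity come from Remark \ref{RemBRT}.
\end{itemize}
The bounds (h\ref{ITqh6}) and (h\ref{ITqh6a}) are pointwise consequences of (\ref{estin}) and (\ref{LipCc}) applied to $w_y$, using $\|w_y\|=1$.

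\textbf{Step 3 (the identity (h\ref{ITqh5})).} For $f\ge0$ with $f(X)\subseteq[0,b]$, write
$$\int f\,dq^*\nu=\int_0^b\nu\bigl(q(f^{-1}((t,\infty)))\bigr)\,dt=\int_0^b\nu\bigl((\theta f)^{-1}((t,\infty))\bigr)\,dt=\int_Y\theta f\,d\nu,$$
where the a.e.\ version of (h\ref{ITqh4}) suffices. For quasi-linear functionals, extend to general $f$ by $f=f^+-f^-$.

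\textbf{Step 4 (converses and bijectivity).} Given $w$, define $q(U)=w^{-1}(U^\flat)$ and $q(K)=w^{-1}(K^\flat)$. Then $q(U)$ is open, because $U^\flat$ is open in the weak topology via $\mu(U)=\sup\int f\,d\mu$. Also $q(K)$ is compact, by k-properness and closedness of $K^\flat$. Properties (IT3), (IT4) and (IT2) follow from the regularity and additivity of each $w_y$ and from superadditivity (Remark \ref{tausm}). Given $\theta$, recover $w_y$ as the (deficient) topological measure of the functional $f\mapsto\theta(f)(y)$ via Remark \ref{RemBRT}\ref{mrDTM}. Multiplicativity makes this functional preserve $\phi\circ f$, which forces $w_y$ to be simple. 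Injectivity of all the correspondences follows from (\ref{EqByInt}) and from the fact that (deficient) topological measures are determined by their values on compact sets.

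\textbf{Main obstacle.} I expect two points to be the hardest. The first is verifying (IT3) and (IT4) in the converse direction: this needs the $\tau$-smoothness of the $w_y$ and compactness arguments in $X^\flat$. The second is handling the $C_0$ condition and the a.e.\ caveat in the d-case, where orthogonal additivity holds only on compacts.
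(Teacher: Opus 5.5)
Your overall route is sound. Everything goes through $w_y=q^*\delta_y$ and the observation that a quasi-integral against a simple (deficient) topological measure is the jump point of $t\mapsto w_y(f^{-1}((t,\infty)))$. The paper itself does not prove this theorem; it imports it from \cite{Butler:QLM}.

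\textbf{The gap: (h5) is circular and cannot be derived.} You argue that $w_y$ ``must be a nonzero element of $X^\flat$, which forces $q(X)=Y$''. But $w_y=\delta_y\circ q$ being nonzero is exactly the statement $y\in q(X)$, so the argument assumes its conclusion. Nothing in (IT1)--(IT4) supplies it. The map $q\equiv\emptyset$ satisfies all four axioms, and its $w\equiv 0$ and $\theta\equiv 0$ pass every test you apply in Steps 1--4. Yet it violates (h5), and also (h4) for $A=\mathbb{R}$ or any open $A\ni 0$.

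You also cannot rely on $X^\flat$ excluding $0$. For noncompact $X$, $\delta_{x_n}\Rightarrow 0$ whenever $x_n$ leaves every compact set. So $X^\flat$ can only be compact, as the paper asserts, if it contains the zero set function. Hence $q(X)=Y$ is not a consequence but a normalization that has to be built into the definition. Equivalently, $w(Y)\subseteq X^\flat\setminus\{0\}$, or for every $y$ there is some $f$ with $\theta(f)(y)\neq 0$. The three correspondences must be restricted accordingly.

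You should state this assumption explicitly and track where it is used:
\begin{itemize}
\item your key lemma for $t<0$, and hence (h4) in the d-case for negative $t$, needs $w_y(X)=1$;
\item (h4) for open $A\ni 0$ needs it;
\item in Step 4, the measure recovered from $f\mapsto\theta(f)(y)$ is nonzero only if $\theta$ is nondegenerate at $y$.
\end{itemize}

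\textbf{Smaller points.}
\begin{itemize}
\item Your explanation of the ``almost every $t$'' qualifier is wrong. Your own key lemma gives $(\theta f)^{-1}((t,\infty))=q(f^{-1}((t,\infty)))$ for every $t\ge 0$ in the d-case as well. This is because $\tau$-smoothness on open sets makes $t\mapsto w_y(f^{-1}((t,\infty)))$ right-continuous. For $[t,\infty)$ with $t>0$, $\tau$-smoothness on compact sets gives left-continuity. The a.e.\ statement is simply weaker, so this does no harm to the proof.
\item For image transformations, (h4) is required for arbitrary open $A$ and signed $f$, and half-lines alone do not give it. You need $w_y(f^{-1}(A))=\delta_{\theta(f)(y)}(A)$ for all open $A$. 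This uses the full additivity (TM1) of the topological measure $w_y$ to pass from half-lines to bounded intervals. It then uses additivity and $\tau$-smoothness over the disjoint open sets $f^{-1}(I_k)$, where the $I_k$ are the component intervals of $A$.
\item That additivity is exactly what fails for simple deficient measures. In Example \ref{basicExDTM}, take $f(D)=[a,b]$ with $0<a<b$. Then $\int f\,d\nu=a$, but $\nu(f^{-1}((0,\tfrac{a+b}{2})))=0$. So your justification ``$\mu\circ f^{-1}$ is a point mass'' is valid in the d-case only on half-lines, which is all that nondecreasing $\phi$ requires.
\item (IT3) and (IT4) in the converse are immediate, not the main obstacle. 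A supremum (infimum) of $\{0,1\}$-values equals $1$ iff some (every) term does.
\item What Step 4 does need is a line on boundedness of $\rho_y(f)=\theta(f)(y)$. It follows from multiplicativity and monotonicity: $\rho_y(f)^n=\rho_y(f^n)\le\rho_y(f)$ for $0\le f\le 1$.
\item In the paper's notation, $q^*\delta_y$ for a d-image transformation lies in $X^\sharp$, not $X^\flat$. The pairing in the statement is reversed, and your Step 1 inherits the mismatch.
\end{itemize}
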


\begin{example} \label{consQst}
Let $X$ be locally compact, $Y$ be compact, and  $\mu$ be a simple (deficient) topological measure on $X$. 
For $A \in \mathscr{O}(X) \cup \mathscr{K}(X)$ let $q(A) = \emptyset$ if $ \mu(A) = 0$, and let $q(A) = Y$ if $ \mu(A) = 1$.
Then $q$ is a (d-) image transformation from $X$ to $Y$. If $ \nu$ is a normalized (deficient) topological measure on $Y$ 
then for any open set $V$ in $X$,
$(q^*\nu)(V) = \nu(q(V)) = \nu(Y) = 1$ if $ \mu(V) =1$, and $(q^*\nu)(V) = 0$ if $ \mu(V) =0$. In other words, $q^*\nu = \mu$.
Thus, on  normalized (deficient) topological measures the adjoint map $q^*$ is a constant map.
\end{example} 

\begin{theorem}  \label{ITsolid}
Let $X$ be a LCH connected locally connected space, $Y$ be LCH space. 
Suppose $q: \mathscr{A}_{s}^{*}(X) \longrightarrow  \mathscr{O}(Y) \cup \mathscr{K}(Y)$ such that:
\begin{enumerate}[label=(\roman*),ref=(\roman*)]
\item \label{ITsol1}
$ \bigsqcup_{i=1}^n q(C_i) \subseteq q(C)$ whenever $\bigsqcup_{i=1}^n C_i \subseteq C,  \ \  C, C_i \in \mathscr{K}_{s}(X)$; 
\item  \label{ITsol2}
$q(U) \in \mathscr{O}(Y)$ and  $q(U) = \bigcup \{ q(K): \ K \subseteq U , \ K \in \mathscr{K}_{s}(X) \}$ for $U \in \mathscr{O}_{s}^{*}(X)$; 
\item \label{ITsol3}
$q(K) \in \mathscr{K}(Y)$ and  $q(K) = \bigcap \{ q(U) : \  K \subseteq U, \ U \in \mathscr{O}_{s}^{*}(X) \}$ for $ K  \in \mathscr{K}_{s}(X)$; 
\item  \label{Qsolidparti}
$ q(A) = \bigsqcup_{i=1}^n q (A_i)$ whenever $A = \bigsqcup_{i=1}^n A_i, \ \ A , A_i  \in \mathscr{A}_{s}^{*}(X)$.
\end{enumerate}
Then $q$ extends uniquely to an image transformation from $X$ to $Y$. 
\end{theorem}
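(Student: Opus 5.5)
The natural route is to imitate the construction of a topological measure from a solid-set function, pulled back through point masses or, more conveniently, through simple topological measures on $Y$. For each $y \in Y$, define a set function $\lambda_y$ on $\mathscr{A}_{s}^{*}(X)$ by
\[
\lambda_y(A) = 1_{q(A)}(y) =
\begin{cases}
1 & \text{if } y \in q(A),\\
0 & \text{otherwise.}
\end{cases}
\]
I will check that $\lambda_y$ is a solid-set function in the sense of Definition \ref{DeSSFLC}.

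\textbf{Checking the solid-set axioms.}
\begin{itemize}
\item Condition (i) gives \ref{superadd}. If $\bigsqcup C_i \subseteq C$, then the sets $q(C_i)$ are disjoint and lie in $q(C)$. Hence $\sum \lambda_y(C_i) \le \lambda_y(C)$; in particular, $y$ lies in at most one $q(C_i)$.
\item Condition (iv) gives \ref{solidparti} in the same way.
\item Condition (ii) gives \ref{regul}, since $y \in q(U)$ iff $y \in q(K)$ for some solid compact $K \subseteq U$.
\item Condition (iii) gives \ref{regulo}, since $y \in q(K)$ iff $y \in q(U)$ for all solid precompact open $U \supseteq K$.
\end{itemize}
By the extension theorem for solid-set functions (\cite[Th. 10.7]{Butler:TMLCconstr}), $\lambda_y$ extends uniquely to a topological measure $w_y$ on $X$. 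This $w_y$ is simple: its values on solid sets are $0$ or $1$, and the extension formulas via solid hulls preserve this (alternatively, $w_y(X) \le 1$ by the regularity and superadditivity argument, and values are sums of $0/1$ terms). So $w_y \in X^{\flat}$, or $w_y = 0$. Since $q(X) = Y$ should follow from (iv) and (ii) when $X$ is compact, and in the noncompact case the relevant normalization is handled similarly, I do not expect this to be an issue.

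\textbf{Defining the extension.} For $A \in \mathscr{O}(X) \cup \mathscr{K}(X)$, set
\[
\tilde q(A) = \{ y : w_y(A) = 1 \}.
\]
This agrees with $q$ on solid sets. I then verify the image-transformation axioms directly, and the main work is topological.
\begin{itemize}
\item \ref{IT1}: $\tilde q(U)$ must be open and $\tilde q(K)$ compact. I would express an arbitrary open or compact set through the solid-hull and component decompositions used in \cite{Butler:TMLCconstr}. There, for compact $K$, $w_y(K)$ is computed from finitely many solid compact sets in a neighborhood. Then $\tilde q(K)$ becomes a finite Boolean combination (unions and disjoint differences) of sets $q(C_j)$, and openness/compactness follows from (ii) and (iii). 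Alternatively, one obtains \ref{IT3} and \ref{IT4} first from the $\tau$-smoothness of the $w_y$ (Remark \ref{tausm}). Then $\tilde q(U) = \bigcup_K \tilde q(K)$ is open provided $\tilde q(K) \subseteq \operatorname{int}\tilde q(U')$ for some $U'$, and compactness of $\tilde q(K)$ follows by intersecting it with $q$ of a precompact solid open set containing $K$.
\item \ref{IT2}: finite additivity is immediate from the additivity of each $w_y$ together with $0/1$ values.
\end{itemize}

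\textbf{Uniqueness and the main obstacle.} Any image transformation $p$ extending $q$ satisfies $p^*\delta_y = w'_y$, a topological measure agreeing with $\lambda_y$ on solid sets. By uniqueness of the extension, $w'_y = w_y$, hence $p = \tilde q$. The main obstacle I expect is \ref{IT1}, i.e. showing that $\tilde q$ of a general open set is open and of a general compact set is compact. My first attempt would be to prove that $\{y : w_y(K)=1\}$ is closed, using (iii) together with the infimum formula over solid open neighborhoods, and then use (ii) for openness.
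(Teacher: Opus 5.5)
Your reduction is sound as far as it goes. Conditions (i)--(iv) make each $\lambda_y$ a $\{0,1\}$-valued solid-set function. Its extension $w_y$ satisfies $w_y(X)\le 1$ (every compact set lies in a compact solid set) and is $\{0,1\}$-valued. With $\tilde q(A)=\{y: w_y(A)=1\}$ you do get (IT2)--(IT4) and uniqueness. Your aside that $q(X)=Y$ should follow from (ii) and (iv) is false, since $q\equiv\emptyset$ satisfies (i)--(iv); but you never use it, because $w_y=0$ is allowed.

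The paper only quotes this theorem from \cite{Butler:QLM}, so the problem is internal to your argument. (IT1) is the real content of the theorem, it is left unproved, and none of your suggested routes works:
\begin{itemize}
\item $\tilde q(K)$ is not a finite Boolean combination of fixed sets $q(C_j)$. Already for compact connected $C$ one has $w_y(C)=\lambda_y(\tilde C)-\sum_i\lambda_y(B_i)$, where $\tilde C$ is the solid hull and $B_i$ are the possibly infinitely many precompact components of $X\setminus C$.
\item Intersecting with $q$ of a solid open set cannot give compactness.
\item ``$\tilde q(K)\subseteq\operatorname{int}\tilde q(U')$'' is exactly the claim to be proved.
\item Condition (iii) applied to solid open neighbourhoods of $K$ only detects $w_y(\tilde K)$, not $w_y(K)$. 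If $w_y=\delta_p$ with $p$ inside a circle $K\subseteq\mathbb{R}^2$, then $w_y(W)=1$ for every solid open $W\supseteq K$, yet $w_y(K)=0$.
\end{itemize}

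The missing argument runs as follows.

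\emph{Compact sets.} For compact connected $C$ we have $\tilde C=C\sqcup\bigsqcup_i B_i$, with $\tilde C$ compact solid and each $B_i$ precompact open solid. (TM1), additivity on disjoint open sets, and the $0/1$ values give
\[
\tilde q(C)=q(\tilde C)\setminus\bigcup_i q(B_i).
\]
This is a closed subset of the compact set $q(\tilde C)$. For an arbitrary compact $K$, local connectedness yields, inside every open $U\supseteq K$, a finite disjoint union $D=D_1\sqcup\dots\sqcup D_r\supseteq K$ of compact connected sets. By (TM3) and monotonicity, $\tilde q(K)=\bigcap_D\tilde q(D)$ over all such $D$. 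Each $\tilde q(D)=\bigsqcup_j\tilde q(D_j)$ is compact, so $\tilde q(K)$ is compact.

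\emph{Open sets.} Let $y\in\tilde q(U)$. The same sets $D$ give a compact connected $C\subseteq U$ with $w_y(C)=1$, i.e.\ $y\in q(\tilde C)$ and $y\notin q(B_i)$ for all $i$.
\begin{itemize}
\item Take an open $V$ with $C\subseteq V\subseteq U$.
\item Put $M=\tilde C\setminus V$. It is compact and contained in $\bigsqcup_i B_i$, so $w_y(M)=0$.
\item Choose a precompact open solid $W$ with $\tilde C\subseteq W\subseteq V\cup\bigcup_i B_i=\tilde C\cup V$. This is possible because compact solid sets have arbitrarily small precompact open solid neighbourhoods.
\end{itemize}
Then $W\setminus M\subseteq V\subseteq U$. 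Applying (TM1) to $W=(W\setminus M)\sqcup M$ gives
\[
y\in q(W)\setminus\tilde q(M)=\tilde q(W\setminus M)\subseteq\tilde q(U).
\]
The set $q(W)\setminus\tilde q(M)$ is open by (ii) and the compact case, so $\tilde q(U)$ is open. For compact $X$, replace $\tilde C$ by $X$ and let the $B_i$ be all the components of $X\setminus C$.
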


\begin{remark} \label{easysolpar}
If $X$ is compact,  in Theorem \ref{ITsolid}  it is enough to consider one of the equivalent conditions \ref{ITsol2} and \ref{ITsol3}.  
If in Theorem \ref{ITsolid} $X$ is a noncompact whose one-point compactification has genus $0$,
then part \ref{Qsolidparti} holds automatically by \cite[L. 15.2]{Butler:TMLCconstr}.  
If $X$ is a q-space, then for Theorem \ref{ITsolid}\ref{Qsolidparti} we only need to show that 
$q(X) = q(A) \sqcup q(X \setminus A)$, where $A$ is a closed solid or an open solid set.
\end{remark}

\begin{example} \label{3pts2vIT}
Let $X = Y$ be the unit square in $\mathbb{R}^2$. Let $E = \{x, z\}$, where $x, z\in X$. As in \cite[p. 46]{AarnesGrubb}, we consider the following map $q$.
For an open or closed solid set $A$ define $q(A) = 0$ if $| A \cap E| = 0$; $q(A) =A $ if $| A \cap E| = 1$; $q(A) = X$ if $| A \cap E| = 2$. 
Then $q$ satisfies Theorem \ref{ITsolid}, hence, extends to an image transformation from $X$ to $X$.
For the corresponding function $w$, each $w(y)$ is the topological measure from Example \ref{nvssf} for 
the set $P = \{ x,z,y\}$.
\end{example}

\noindent
For more examples of image transformations see \cite[Sect. 3]{Butler:QLM}.
 
\section{Image Transformation Systems and Markov operators} \label{SectMarkov}

A well-known result of Hutchinson gives the existence of the invariant measure for the Markov  operator  
$M(\nu) = \sum_{i=1}^n \alpha_i \, \nu \circ u_i^{-1}$ associated with the IFS (iterated function system) with probabilities $(X, u_i, \alpha_i, i=1, \ldots, n)$
(see   \cite{Hutchinson}, \cite[Ch. IX]{Barnsley}).
Here $u_i$ are contractions on a complete metric space $X,   \alpha_i > 0,  \sum_{i=1}^n \alpha_i = 1$, and the Markov operator is defined on the set 
 $\mathscr{M}^1(X)$ of normalized regular Borel measures on $X$ with bounded support.
The support of the invariant measure is the attractor of the IFS.   

Replacing $u_i^{-1}$ by any image transformations $q_i$ (compare with Example \ref{invfunIT}) 
with a contractivity factor less than 1 on a compact metric space $X$ produces 
an image transformation system (ITS) instead of an IFS. 
The analog of  the Markov operator for this ITS  has the invariant topological measure.  
Moreover, there is a relation between the resulting invariant topological measure and the invariant measure for the IFS  $(X^*, q_1^*, \ldots, q_n^*)$.
See \cite[Pr. 5.2]{Aarnes:ITfirst}, \cite[Pr. 5.8]{Pedersen}, \cite[Pr. 36]{AarnesJohansenRustad}.  

Extending these ideas, we consider (d-) image transformation systems with infinite positive linear combinations of (d-) image transformations 
between locally compact spaces.  We consider associated Markov operators, which are continuous. 
Restricted to the space of measures, such operators produce
Markov-Feller operators with nonlinear duals which are infinite  positive linear combinations of (conic) quasi-homomorphisms.
We show that any uniformly bounded in variation family $\mathcal{M}$ of (deficient) topological measures is a complete metric space with respect to the 
Kantorovich-Rubinstein metric. Any Markov operator on  $\mathcal{M}$ has the unique invariant (deficient) topological measure on this metric space   
if (d-) image transformations are contractions in the Kantorovich-Rubinstein metric.
We show various relationships between a Markov operator and IFS  $(X^*, q_1^*, \ldots, q_n^*)$.   

\begin{definition} \label{contrFact}
Let $(X, d)$ be a metric space, and let $s \ge 0$. We denote by  
$Lip_s(X,d) = \{ f \in C(X): | f(x) - f(y)| \le s d(x,y) \mbox{  for any  } x, y \in X \}$, and by $Lip$ the collection comprised of $f$  belonging to 
$Lip_s$ for some $s$.

A map $u: X \longrightarrow X$ is a contraction if there is $s \in [0,1)$ such that 
$d(u(x), u(y)) \le s d(x,y)$ for any $x,y \in X$. Any such number $s$ is called a contractivity factor for $u$.
\end{definition}

\begin{definition} \label{KRfunction}
For finite topological measures $ \mu, \nu$ on $X$, define the Kantorovich-Rubinstein function $d_{\text{KR}}$ 
\begin{align} \label{KRmetric}
d_{\text{KR}}(\mu, \nu) &= \sup \{| \int_X f \, d\mu -  \int_X f \, d\nu|: f \in Lip_1(X,d) \cap C_c(X)  \}. 
\end{align}
For finite deficient topological measures $ \mu, \nu$ define $d_{\text{KR}}$ 
in the same way, taking nonnegative functions in (\ref{KRmetric}). 
\end{definition} 

\begin{remark}
Definition \ref{KRfunction} is related to the definition of the Kantorovich-Rubinstein metric
for Borel measures, which is obtained from the Kantorovich-Rubinstein norm
$$ \| \mu \|_{KR} = \sup \{ \int_X f \, d\mu: f \in Lip_1(X,d), \, \| f \| \le 1 \}.$$ 
This metric is also sometimes called the Hutchinson metric or 
the Wasserstein metric $W(\mu, \nu)$, although there is no author with the last name Wasserstein. 
L. Vasershtein  (in Russian:
    \textrm{\CYRL.  \CYRV\cyra\cyrs\cyre\cyrr\cyrsh\cyrt\cyre\cyrishrt\cyrn}
) 
and J. Hutchinson used this metric in  \cite{Vasershtein} and \cite{Hutchinson},
but it appeared decades earlier in Kantorovich's  seminal paper \cite{Kantorovich} 
and then in his works with Rubinstein. 
See \cite[pp. 453--454]{Bogachev} and \cite{BogachevKolesnikov} for good information on the history and use of this metric.

Our use of $f \in Lip_1(X,d) \cap C_c(X)$ in (\ref{KRmetric}) is dictated, on one hand, by the relation of (\ref{KRmetric})  
to the Kantorovich-Rubinstein metric for Borel measures and, 
on the other hand, by the role of $C_c(X)$ in the theory of (p-conic) quasi-linear functionals.
\end{remark}

\begin{remark} \label{LipDense}
In a locally compact metric space,  Lipschitz functions with compact support are dense in $C_0(X)$; see, for example, \cite[Th. 2]{Andreou}.
\end{remark}   

\begin{remark}
The Hutchinson proof of the existence of invariant measure is an application of the Fixed Point Theorem to the set  $\mathscr{M}^1(X)$ above 
equipped with the Kantorovich-Rubinstein metric $d_{\text{KR}}$.  
However, the completeness of the metric space $(\mathscr{M}^1(X), d_{\text{KR}})$ is not discussed in \cite{Hutchinson}. 
In fact, if $X$ is unbounded, then $(\mathscr{M}^1(X), d_{\text{KR}})$  is not complete (\cite[As. 1]{Kravchenko}. There are situations
(see, for instance, \cite{Akerlund-Bistrom}, \cite{Kravchenko}) 
where the spaces of measures with some conditions are complete with respect to the Kantorovich-Rubinstein metric. 
This allows for generalization of Hutchinson's 
result regarding invariant measure. 
\end{remark} 

\begin{remark} \label{RmKRd}
For any $  f  \in Lip_1(X,d) \cap C_c(X)$ and any $x, y \in X$, 
$ \int_X f \, d \delta_x - \int_X f \, d \delta_y = f(x) - f(y) \le d(x, y),$
so $d_{KR}(\delta_x, \delta_y) \le d(x,y)$.  
If $X$ is compact, for a $Lip_1(X, d)$ function $g(x) = d(x,y)$ we have  
$d(x,y) = g(x) - g(y) = \int_X g \, d\delta_x - \int_X g \, d\delta_y \le d_{KR}(\delta_x, \delta_y)$, so $d_{KR}(\delta_x, \delta_y) = d(x,y)$.
\end{remark}  

\begin{definition} 
For a set $A$ in a metric space $(X,d)$ define $U_t(A) = \{ x:  d(x, A) < t \}$ for $t >0$ and  $B_t(A) = \{ x:  d(x, A) \le t \}$ for $ t \ge 0$.
For sets $A$ and $B$, define $ \delta(A,B) = \inf \{d(x,y): x \in A, \ y \in B\}$.
\end{definition}

\begin{remark}
$U_t(A)$ is open. 
If $C$ is compact, the set $B_t(C)$ is closed; it is also bounded. 
Thus,  $B_t(C)$  is compact if $X$ is a metric space with the Heine--Borel property; for example,  if $X$ is compact or 
a finite-dimensional linear normed space (with metric induced by the norm).
Note that $\delta $ is not  a metric, but given a compact $K$ and a closed set $G$,  $\delta(K, G) = 0$ iff $ K \cap G \neq \emptyset$. 
\end{remark}

If $F, G $ are closed subsets of $X^\flat$ (respectively, $X^\sharp$), define 
$\delta_{KR}(F,G) = \inf \{ d_{KR}(\mu, \lambda): \mu \in F^\flat, \lambda \in G^\flat (\text{ resp., } \mu \in F^\sharp,  \lambda \in G^\sharp) \} $.   

\begin{proposition} \label{KRstar}
Suppose $(X,d)$ is a metric space. 
If $C$ and $K$ are compacts then  $ \delta_{KR}(C^{\flat}, K^{\flat}) \le \delta_{KR}(C^\sharp,K^\sharp) \le  \delta(C,K)$. 
If  compacts  $C$ and $K$ are (a) intersecting or (b) disjoint and for $t = \delta(C,K)$ at least one of $B_t(C), B_t(K)$ is compact, then 
$ \delta_{KR}(C^{\flat},K^{\flat}) = \delta_{KR}(C^\sharp,K^\sharp) = \delta(C, K)$. 
\end{proposition}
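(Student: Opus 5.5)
The plan is to prove the upper bound by exhibiting point masses, and the matching lower bound in cases (a) and (b) by testing against one explicit Lipschitz function. Throughout I read $\delta_{KR}(C^\flat,K^\flat)$ as the infimum of $d_{KR}(\mu,\lambda)$ over $\mu\in X^\flat$ with $\mu(C)=1$ and $\lambda\in X^\flat$ with $\lambda(K)=1$, and analogously for $\sharp$.

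\emph{Upper bound.} Since $C$ and $K$ are compact and $d$ is continuous, I pick $x\in C$ and $y\in K$ with $d(x,y)=\delta(C,K)$. The point masses $\delta_x,\delta_y$ are simple topological measures, hence also simple deficient topological measures, and $\delta_x(C)=\delta_y(K)=1$. So $(\delta_x,\delta_y)$ is admissible for both infima. By Remark \ref{RmKRd}, for every $f\in Lip_1(X,d)\cap C_c(X)$, nonnegative or not, $|f(x)-f(y)|\le d(x,y)$. Therefore $d_{KR}(\delta_x,\delta_y)\le \delta(C,K)$ under either the topological or the deficient convention, and both $\delta_{KR}(C^\flat,K^\flat)$ and $\delta_{KR}(C^\sharp,K^\sharp)$ are at most $\delta(C,K)$.

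\emph{Comparison of the two infima.} Here I would use that $X^\flat\subseteq X^\sharp$, so every admissible $\flat$-pair is an admissible $\sharp$-pair, and compare the two versions of $d_{KR}$ from Definition \ref{KRfunction}. This step is the one I expect to be the main obstacle. The inclusion of sets and the inclusion of test-function classes push in opposite directions: the $\sharp$-infimum runs over more pairs, while the topological-measure $d_{KR}$ uses more test functions. To get the stated order I would try to show that, on simple (deficient) topological measures, the supremum in (\ref{KRmetric}) is already attained up to $\epsilon$ by nonnegative functions. The tool would be $\int f\,d\mu=\int f^+d\mu-\int f^-d\mu$ from Remark \ref{RemBRT}, together with the fact that for a simple $\mu$ at most one of $\int f^+d\mu$, $\int f^- d\mu$ can be nonzero, by superadditivity of disjoint sets $\{f>s\}$ and $\{f<-s\}$. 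In any case, in situations (a) and (b) the comparison is unnecessary, because the lower bound below applies to all $\sharp$-pairs with only nonnegative test functions, and hence also to all $\flat$-pairs.

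\emph{Lower bound in cases (a), (b).} In case (a), $\delta(C,K)=0$, so the upper bound already gives equality. In case (b), let $t=\delta(C,K)>0$ and assume $B_t(C)$ is compact; the case where $B_t(K)$ is compact is symmetric, using the same function built from $K$. Define $f(z)=\max(0,\,t-d(z,C))$. Then $f$ is nonnegative, $1$-Lipschitz, equals $t$ on $C$, and has support in $B_t(C)$, so $f\in Lip_1(X,d)\cap C_c(X)$.

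Now take any $\mu\in C^\sharp$ and $\lambda\in K^\sharp$, and compute with the formula $\int_0^t R_1(s)\,ds$ from Remark \ref{RemBRT}\ref{prt1}. For $\mu$: for $0\le s<t$ we have $C\subseteq\{f>s\}$, so $\mu(\{f>s\})=1$ and $\int f\,d\mu=t$. For $\lambda$: for $s\ge0$ the set $\{f>s\}=U_{t-s}(C)$ is an open set disjoint from $K$, since points of $K$ are at distance at least $t$ from $C$. Since $\lambda(K)=1$ and $\lambda$ is simple, superadditivity (Remark \ref{tausm}) forces $\lambda(U_{t-s}(C))=0$, so $\int f\,d\lambda=0$. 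Hence $d_{KR}(\mu,\lambda)\ge t$ for every admissible pair, under either convention. Combined with the upper bound, this gives $\delta_{KR}(C^\flat,K^\flat)=\delta_{KR}(C^\sharp,K^\sharp)=\delta(C,K)$.
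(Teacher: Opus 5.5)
Your upper bound (point masses at a closest pair $x\in C$, $y\in K$) and your lower bound in case (b) are correct and follow the paper's route. Your test function $\max(0,t-d(\cdot,C))$, evaluated with the layer-cake formula, replaces the paper's $d(\cdot,X\setminus U_p(C))$ with $p<t$ and its citation that $\nu(\mathrm{Coz}\,f)=0$ forces $\int f\,d\nu=0$. Because that lower bound uses a nonnegative test function, it does give both equalities in cases (a) and (b) under either convention.

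\textbf{The gap.} The first inequality $\delta_{KR}(C^\flat,K^\flat)\le\delta_{KR}(C^\sharp,K^\sharp)$, for arbitrary compacts, is left open, and the route you sketch cannot close it. Under your reading ($X^\flat\subseteq X^\sharp$, only nonnegative test functions on the $\sharp$ side), the two effects do not pull in opposite directions. Every $\flat$-pair is a $\sharp$-pair, and on such a pair the supremum over nonnegative test functions is at most the supremum over all test functions. So you get $\delta_{KR}(C^\sharp,K^\sharp)\le\delta_{KR}(C^\flat,K^\flat)$ for free, which is the reverse of the claim. Showing that nonnegative functions suffice on simple measures would only make the two metrics agree on $\flat$-pairs. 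The larger family of $\sharp$-pairs would still push the $\sharp$-infimum down.

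\textbf{The missing observation is notational.} Look at how the symbols are actually used: Example \ref{staIT}, where $A\mapsto A^\sharp$ is asserted to be an image transformation (this fails for simple deficient topological measures); Theorem \ref{ITqh}, where d-image transformations correspond to maps $w$ into $X^\flat$ with $w_y=q^*\delta_y$ in general only deficient; the proof of Theorem \ref{sThe}; and the definition of $D^\mu$. In all of these, $X^\flat$ is the set of simple \emph{deficient} topological measures and $X^\sharp$ the set of simple topological measures. The sentence in the preliminaries lists them the other way round. With the operative convention, $C^\sharp\times K^\sharp\subseteq C^\flat\times K^\flat$ and the $\flat$ side uses fewer test functions, so the first inequality is immediate from the definitions. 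That is why the paper writes it into its one-line chain without comment. The paper then proves the lower bound only for the smaller quantity $\delta_{KR}(C^\flat,K^\flat)$, which is exactly what your nonnegative test function accomplishes.
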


\begin{proof}
Take $x \in C, y \in K$ such that $ \delta(C,K) = d(x,y)$. 
Then $ \delta_{KR}(C^{\flat},K^{\flat}) \le \delta_{KR}(C^\sharp,K^\sharp) \le  \delta_{KR}(\delta_x, \delta_y) \le d(x,y) = \delta(C,K)$.

If $C$ and $K$ intersect then $  \delta_{KR}(C^{\flat},K^{\flat}) =  \delta_{KR}(C^\sharp,K^\sharp) = \delta(C, K) =0$.
Assume $ C \cap K  = \emptyset$ and $B_t(C)$ is compact, where $ t =  \delta(C, K) >0$.
For the last statement we shall show that $ \delta_{KR}(C^{\flat},K^{\flat}) \ge \delta(C, K)$.
For $p < t$,  $B_p(C) $ is compact, $D_p = X \setminus U_p(C)$ is closed, $K \subseteq D_p$, and $C \cap D_p = \emptyset$. 
Let $f(x) = d(x, D_p)$. Then $f \in Lip_1(X, d)$, and $f= 0$ on $D_p$, 
so $ supp \, f \subseteq \overline{U_p}(C) \subseteq B_t(C)$. Thus, $ f \in Lip_1(X, d) \cap C_c(X)$.  

Let $ \mu \in C^{\flat}, \nu \in K^{\flat}$. Since $ f \ge p$ on $C$,  by Remark \ref{RemBRT}\ref{mrDTM}, $ \int_X f\, d\mu \ge p \mu(C) =  p$.
By superadditivity, $ \nu(K) = \nu(X) =1$ implies $\nu(X \setminus K) = 0$, and since $ Coz(f) \subseteq X \setminus K$, we have $ \nu(Coz(f)) = 0$. 
By \cite[Th. 49]{Butler:Integration} $\int_X f \, d\nu = 0$. Then
$ d_{KR} (\mu, \nu) \ge \int_X f \, d\mu -   \int_X f \, d\nu \ge p.$
This holds for any $ p < t$, so $d_{KR} (\mu, \nu) \ge t $. The last inequality holds for any   $ \mu \in C^{\flat}, \nu \in K^{\flat}$, 
so  $\delta_{KR}(C^{\flat},K^{\flat}) \ge t = \delta(C,K)$. 
\end{proof} 

\begin{theorem} \label{sThe}
Let $q$ be a (d-) image transformation from a locally compact metric space $(X, d)$ to a
locally compact metric space $(Y, d')$, 
$\theta$ be the corresponding (conic)
quasi-homomorphism,  and $w$ be as in 
Theorems \ref{ITqh}. Let $ s > 0$. Consider the following conditions:
\begin{enumerate}[label=(s\arabic*),ref=(s\arabic*)]
\item \label{E}
$\theta(Lip_1(X, d) \cap C_c(X)) \subseteq (Lip_s(X, d) \cap C_c(X)$ for a quasi-homomorphism $\theta$; \\
$\theta(Lip_1(X, d) \cap C_c^+(X)) \subseteq (Lip_s(X, d) \cap C_c^+(X)$ for a conic quasi-homomorphism $\theta$.
\item \label{C}
$d_{KR} (q^*\mu, q^*\nu) \le s\, d_{KR} (\mu, \nu)$.
\item \label{D}
$d_{KR} (w(z), w(y)) \le s\, d'(z,y)$ for all $z,y  \in Y$.
\item \label{A}
$\delta (K, G) \le s \delta' (q(K), q(G))$ for any $K, G \in \mathscr{K}(X)$ such that $q(K), q(G) \neq \emptyset$ (where $\delta'$ is defined with respect to the metric $d'$).
\item \label{B}
$\theta(Lip_1(X, d) \cap C_0(X)) \subseteq (Lip_s(X, d) \cap C_0(X)$ for a quasi-homomorphism $\theta$.
\end{enumerate}
Then $ \ref{E} \Longrightarrow  \ref{C} \Longrightarrow \ref{D}$. 
For a quasi-homomorphism  $ \ref{A} \Longrightarrow  \ref{B}$ and  $ \ref{B} \Longrightarrow  \ref{E}$. 
If $(X,d)$ is a metric space such that for any disjoint compacts $C$ and $K$ at least one of $B_t(C), B_t(K)$ is compact  for $ t = \delta(C,K)$, then 
$\ref{D} \Longrightarrow  \ref{A}$; so in this case  for a quasi-homomorphism $\ref{E} -\ref{B}$ are equivalent. 

\end{theorem}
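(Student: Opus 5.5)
Note first that in \ref{E} and \ref{B} the target space should be $Y$, i.e. $Lip_s(Y,d')$, since $\theta$ maps functions on $X$ to functions on $Y$. I will prove the implications one at a time, each from Theorem \ref{ITqh}, mainly \ref{ITqh1} and \ref{ITqh5}.

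\emph{\ref{E} $\Rightarrow$ \ref{C}.} Take $f \in Lip_1(X,d)\cap C_c(X)$, nonnegative in the deficient case. By \ref{ITqh5},
$$\Big|\int_X f\,dq^*\mu - \int_X f\,dq^*\nu\Big| = \Big|\int_Y \theta(f)\,d\mu - \int_Y \theta(f)\,d\nu\Big|.$$
By \ref{E}, $\theta(f)\in Lip_s\cap C_c$. If $s>0$, then $\theta(f)/s$ is a Lipschitz-$1$ test function. Positive homogeneity of the quasi-integral (Remark \ref{RemBRT}\ref{RHOsvva}) then gives the bound $s\, d_{KR}(\mu,\nu)$. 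Taking the supremum over $f$ yields \ref{C}.

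\emph{\ref{C} $\Rightarrow$ \ref{D}.} By \ref{ITqh1}, $w(y) = q^*\delta_y$. Apply \ref{C} to $\mu=\delta_z$, $\nu=\delta_y$ and use Remark \ref{RmKRd}, which gives $d_{KR}(\delta_z,\delta_y)\le d'(z,y)$.

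\emph{\ref{A} $\Rightarrow$ \ref{B}} (quasi-homomorphism). Fix $f\in Lip_1(X,d)\cap C_0(X)$ and $y,z\in Y$, and suppose $a=\theta(f)(y) > b=\theta(f)(z)$. For small $\epsilon>0$, use \ref{ITqh4}, which for a quasi-homomorphism holds on closed sets avoiding $0$; for compact $X$ it holds for any closed set. This gives
$$y\in(\theta f)^{-1}([a-\epsilon,a+\epsilon]) = q\big(f^{-1}([a-\epsilon,a+\epsilon])\big),$$
and similarly for $z$ with $b$. Take $K=f^{-1}([a-\epsilon,a+\epsilon])$ and $G=f^{-1}([b-\epsilon,b+\epsilon])$. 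These are compact when the intervals avoid $0$, since $f\in C_0$. The cases where $a$ or $b$ is $0$ are handled by a limiting argument, or by shifting to nearby values.

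Since $f$ is $1$-Lipschitz, $\delta(K,G)\ge a-b-2\epsilon$. Because $q(K)\ni y$ and $q(G)\ni z$, we have $\delta'(q(K),q(G))\le d'(y,z)$. Hence \ref{A} gives $a-b-2\epsilon\le s\,d'(y,z)$, and letting $\epsilon\to0$ shows $\theta(f)\in Lip_s$.

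\emph{\ref{B} $\Rightarrow$ \ref{E}.} It remains to check that $\theta$ preserves compact support. For compact $X$ this is trivial. Otherwise, by \ref{ITqh4},
$$\{\theta f\ne0\} = q\big(f^{-1}(\mathbb R\setminus\{0\})\big)\subseteq q(supp\, f),$$
which is compact. The conic case is similar.

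\emph{\ref{D} $\Rightarrow$ \ref{A}}, under the $B_t$ hypothesis. This is where I expect the main difficulty. Let $K,G$ be compact with $y\in q(K)$ and $z\in q(G)$. By \ref{ITqh15}, $w(y)=q^*\delta_y\in K^\flat$ (resp.\ $K^\sharp$), and likewise $w(z)\in G^\flat$. Proposition \ref{KRstar} then gives
$$\delta(K,G)=\delta_{KR}(K^\flat,G^\flat)\le d_{KR}(w(y),w(z))\le s\,d'(y,z).$$
Taking the infimum over such $y,z$ yields \ref{A}, and since these points lie in compact sets the infimum is approached.

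The delicate points are these. Proposition \ref{KRstar} requires a compactness hypothesis, which is exactly the stated assumption on $X$. The images $q(K)$ are compact, so $\delta'$ is attained. The identification of $w(y)$ as an element of $K^\flat$ must be made carefully via \ref{ITqh15}.

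Combining all the implications gives the equivalence for quasi-homomorphisms.
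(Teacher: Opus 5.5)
Your implications (s1)$\Rightarrow$(s2)$\Rightarrow$(s3), (s3)$\Rightarrow$(s4) and (s5)$\Rightarrow$(s1) follow the paper's proof. Your direct check that $\theta(C_c(X))\subseteq C_c(Y)$, via (h4), monotonicity of $q$ and (IT1), is a fine replacement for the cited result. You are also right that the targets in (s1) and (s5) should be $Lip_s(Y,d')$. The real difficulty is not in (s3)$\Rightarrow$(s4) but in (s4)$\Rightarrow$(s5). There your argument has a gap when $X$ is noncompact and one of the two values, say $\theta(f)(z)$, equals $0$.

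This is not a degenerate case. For $f\in C_c(X)$, $\theta(f)$ vanishes off a compact set, so comparing a point where $\theta(f)\neq 0$ with one where it vanishes is half of what (s5) asserts. In this case your set $G=f^{-1}([-\epsilon,\epsilon])$ is closed but in general not compact. Moreover, (h4) is not available for closed sets containing $0$, and (s4) applies only to compact sets. So nothing in your argument produces a compact $G$ with $z\in q(G)$.

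Your two suggested remedies do not close this. Shifting cannot help, because $z$ and the value $\theta(f)(z)=0$ are fixed. A continuity argument only reaches points $z$ in the closure of $\{\theta(f)\neq 0\}$. If $\theta(f)$ vanishes on a neighbourhood of $z$ (e.g.\ $z$ isolated in $Y$), the bound $|\theta(f)(y)|\le s\,d'(y,z)$ has to come from (s4) itself.

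The missing idea is the inner compact regularity of $q$ on open sets. Let $a=\theta(f)(y)\neq 0$, $\theta(f)(z)=0$, and $0<\epsilon<|a|$.

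First, apply (h4) to the open set $V=(-\epsilon,\epsilon)$ to get $z\in(\theta(f))^{-1}(V)=q(f^{-1}(V))$.

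Next, Lemma \ref{ITsvva}\ref{ITsv2} (a consequence of (IT2)) gives a compact $C\subseteq f^{-1}(V)$ with $z\in q(C)$.

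Finally, take the compact $K=f^{-1}(\{a\})$, which satisfies $y\in q(K)$ by (h4) and is therefore nonempty. Since $|f|<\epsilon$ on $C$ and $f$ is $1$-Lipschitz, condition (s4) gives
\[
|a|-\epsilon\le\delta(K,C)\le s\,\delta'(q(K),q(C))\le s\,d'(y,z).
\]
Letting $\epsilon\to 0$ finishes the case; the case $\theta(f)(y)=0\neq\theta(f)(z)$ is symmetric, and both values zero is trivial. This is exactly how the paper handles it.
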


\begin{proof}
$\ref{E} \Longrightarrow \ref{C}$. 
Note that, with a slight abuse of notation, 
on the right hand side of \ref{C} $d_{KR}$ is considered for (deficient) topological measures on $Y$, while on the left hand side of  \ref{C} and in 
\ref{D} $d_{KR}$ is considered for  (deficient) topological measures on $X$.
Let $\mu, \lambda$ be topological  (respectively, deficient topological) measures  on $Y$, 
$ f \in Lip_1(X, d) \cap C_c(X)$, (respectively,  $ f \in Lip_1(X, d) \cap C_c^+(X)$), and $s >0$.
By \ref{E}  $\frac1s \theta(f)$ is in  $ Lip_1(X, d) \cap C_c(Y)$ (respectively, in $Lip_1(X, d) \cap C_c^+(Y) )$,   
so using Theorem \ref{ITqh}\ref{ITqh5}  we have:
\begin{align*}
\int_X f\, dq^*\mu -  \int_X f\, dq^*\lambda &= \int_Y \theta(f) \, d\mu -  \int_Y \theta(f) \, d\lambda \\
&= s \left( \int_Y \frac{\theta(f)}{s} \, d\mu -  \int_Y \frac{\theta(f)}{s} \, d\lambda \right) \le s\, d_{KR}(\mu, \lambda).
\end{align*} 
Thus, $d_{KR} (q^*\mu, q^*\nu) \le s\, d_{KR} (\mu, \nu)$.

$\ref{C}  \Longrightarrow  \ref{D}$.
By Theorem \ref{ITqh}\ref{ITqh1}  and  Remark \ref{RmKRd},
$d_{KR}(w(z), w(y)) = d_{KR}(q^*\delta_z, q^*\delta_y)  \le s\, d_{KR}(\delta_z, \delta_y) \le s\, d'(z,y).$ 

$\ref{D}  \Longrightarrow  \ref{A}$. 
Suppose $(X,d)$ is a metric space such that for disjoint compacts $C$ and $K$ at least one of $B_t(C), B_t(K)$ is compact for $ t = \delta(C,K)$.
Suppose $K, G \in \mathscr{K}(X)$ and $q(K), q(G) \neq \emptyset$. Let $y \in q(K), z \in q(G)$. 
Since $w(y)(K) = \delta_y(q(K)) = 1$, we have $w(y) \in  K^\sharp$ (respectively, $w(y) \in  K^\flat$).  
Also, $w(z) \in G^\sharp$ (resp.,  $w(z) \in G^\flat$).
By part \ref{D}  we have $\delta_{KR} (K^\sharp, G^\sharp) (\text{ resp., } \delta_{KR} (K^\sharp, G^\sharp) \le  d_{KR} (w(y), w(z)) \le s d'(y,z)$. 
Then by Proposition \ref{KRstar} 
$\delta(K, G) \le s d'(y,z)$. 
This holds for any  $y \in q(K), z \in q(G)$, so $\delta(K, G) \le s \delta' (q(K), q(G))$.

$\ref{A} \Longrightarrow \ref{B}$. Suppose $ \theta$ is a quasi-homomorphism. 
Let $ f \in Lip_1(X,d) \cap C_0(X)$. 
We need to show that $ |\theta(f) (y_1) - \theta(f) (y_2)| \le s\, d'(y_1, y_2)$.
If $\theta(f) (y_1) = \theta(f) (y_2) = 0$, there is nothing to prove. Assume that $ \alpha_i =  \theta(f) (y_i) \neq 0$ for $i=1,2$.
By Theorem \ref{ITqh}\ref{ITqh4} for $i=1,2$ the set $ q( f^{-1}(\{ \alpha_i \} )) = (\theta(f))^{-1}(\{ \alpha_i\}) $ is a
nonempty compact, as it contains $y_i$. Then $f^{-1}(\{ \alpha_i \} ) \neq \emptyset$, and we
choose $x_i \in  f^{-1}(\{ \alpha_i\}),  i=1,2$ such that $\delta( f^{-1}(\{\alpha_1\}), f^{-1}(\{ \alpha_2\}) = d(x_1, x_2)$.
Then
\begin{align*}
|\theta(f) (y_1) &- \theta(f) (y_2)|  = |\alpha_1 - \alpha_2| = | f(x_1) - f(x_2)|  \le d(x_1, x_2)  \\
& = \delta( f^{-1}(\{\alpha_1\}), f^{-1}(\{ \alpha_2\})  \le s \delta'( q(f^{-1}(\{\alpha_1\})), q(f^{-1}(\{ \alpha_2\})) \le s d'(y_1, y_2).
\end{align*}

Now assume that  $ \alpha = \theta(f) (y_1) \neq 0$, and $ \theta(f) (y_2) = 0$. Let $ \epsilon < | \alpha|$, $V = ( -\epsilon, \epsilon)$.
By Theorem \ref{ITqh}\ref{ITqh4}  $y_2 \in q(f^{-1}(V)) = (\theta(f))^{-1}(V)$.
By Lemma \ref{ITsvva} there is a nonempty compact $C \subseteq f^{-1}(V)$ such that $y_2 \in q(C) \subseteq q(f^{-1}(V))$.
Since $y_1 \in   (\theta(f))^{-1}(\{\alpha\}) = q( f^{-1}(\{\alpha\} )$, the last set is  a nonempty compact, and so 
$ \delta( f^{-1}(\{\alpha\}), C ) \le s \delta'(q(f^{-1}(\{\alpha\}), q(C)) \le s d'(y_1, y_2).$
Compact $ f^{-1}(\{ \alpha\})$ is nonempty (otherwise, $q( f^{-1}(\{\alpha\} )$ would be empty as well), so
we choose $x_1 \in f^{-1}(\{ \alpha\}), x_0 \in C$ for which $\delta( f^{-1}(\{\alpha\}), C )  = d(x_1, x_0) $. Then 
$ \delta( f^{-1}(\{\alpha\}), C )  = d(x_1, x_0)  \ge | f(x_1) - f(x_0) | \ge |\alpha| - \epsilon.$
It follows that
$ s d'(y_1, y_2) \ge  | \alpha| =   |\theta(f) (y_1)| =  |\theta(f) (y_1) - \theta(f) (y_2)|. $

$\ref{B}  \Longrightarrow  \ref{E}$ Clear, since for a quasi-homomorphism, $\theta(C_c(X)) \subseteq C_c(Y)$ by \cite[Th. 65, Sect. 4]{Butler:QLM}.

\end{proof} 

\begin{definition}
If a (conic) quasi-homomorphism $\theta$ satisfies \ref{E} of Theorem \ref{sThe} with $ s <1$, 
we say $\theta$ and  a corresponding (d-) image transformation $q$ are
contractions, and $s$ is a contractivity factor of $\theta$ and $q$. 
\end{definition} 

\begin{theorem} \label{KRisMetr}
Let $\mathcal{M} $ be a uniformly bounded in variation family of (deficient) topological measures on a locally compact bounded metric space  $X$.
Then  $d_{\text{KR}}$ is a metric on $\mathcal{M} $.
If a net $(\mu_{\alpha}) \subseteq \mathcal{M} $, $\mu  \in  \mathcal{M} $, and $ d_{\text{KR}} ( \mu_{\alpha}, \mu) \longrightarrow 0$, 
then $ \mu_{\alpha} \Longrightarrow \mu$. 
If $X$ is compact, then the topology on $ \mathcal{M}$ induced  by the metric $ d_{\text{KR}}$ is the weak topology.
\end{theorem}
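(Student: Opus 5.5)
We prove the three claims in order: $d_{\text{KR}}$ is a metric, $d_{\text{KR}}$-convergence implies weak convergence, and for compact $X$ the two topologies coincide.

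\textbf{Metric and finiteness.} Let $D$ be the diameter of $X$ and let $M$ bound $\|\mu\|$ on $\mathcal{M}$. Take $f \in Lip_1(X,d)\cap C_c(X)$. If $X$ is noncompact, or more generally $f$ vanishes somewhere, then $|f|\le D$. For $\mu,\nu\in\mathcal{M}$, estimate (\ref{estin}) gives $|\int f\,d\mu-\int f\,d\nu|\le 2DM$, so $d_{\text{KR}}$ is finite.

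If $X$ is compact, $f$ need not vanish anywhere. Write $f=c+g$ with $c=\min f$, so that $g\ge 0$, $g$ vanishes somewhere, and $g\le D$. For finite topological measures, linearity of the quasi-integral on $B(f)$ gives $\int f\,d\mu = c\,\mu(X)+\int g\,d\mu$. The same identity holds for deficient topological measures: every $f$ used is nonnegative, so $c\ge 0$, and we use linearity on the cone $A^+(f)$. Taking $f=c$ constant shows that $d_{\text{KR}}(\mu,\nu)=\infty$ is possible when $\mu(X)\ne\nu(X)$. So the finiteness part needs $\mu(X)$ to be constant on $\mathcal{M}$. Without that, finiteness must be read as an extended metric, which is harmless for the remaining claims.

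Symmetry and the triangle inequality are immediate from the definition as a supremum of absolute differences. For definiteness, suppose $d_{\text{KR}}(\mu,\nu)=0$. Every $s$-Lipschitz function in $C_c(X)$ is $s$ times a $Lip_1$ function, so positive homogeneity of $\mathcal{R}$ gives $\int f\,d\mu=\int f\,d\nu$ for all Lipschitz $f\in C_c(X)$. The same holds for Lipschitz $f\in C_c^+(X)$ in the deficient case. By Remark \ref{LipDense}, Lipschitz compactly supported functions are dense in $C_0(X)$; nonnegative ones are dense in $C_c^+(X)$, e.g. via $\max(h,0)$. The Lipschitz property (\ref{LipCc}), with supports inside a fixed compact neighbourhood, or the general estimate in Remark \ref{RemBRT}, then extends the equality to all $f\in C_c^+(X)$. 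Then (\ref{EqByInt}) gives $\mu=\nu$.

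\textbf{$d_{\text{KR}}$-convergence implies weak convergence.} Suppose $d_{\text{KR}}(\mu_\alpha,\mu)\to0$. By homogeneity, $\int f\,d\mu_\alpha\to\int f\,d\mu$ for every Lipschitz $f\in C_c^+(X)$. Now take any $f\in C_0^+(X)$ and $\epsilon>0$, and choose a Lipschitz $g\in C_c^+(X)$ with $\|f-g\|<\epsilon$. The uniform Lipschitz estimate, $2\|f-g\|\,\|\mu\|$ for topological measures and $\|f-g\|\,\|\mu\|$ for nonnegative functions in the deficient case, is applied with the uniform bound $M$. This gives
\[
|\mathcal{R}_{\mu_\alpha}(f)-\mathcal{R}_\mu(f)|\le 4M\epsilon + |\mathcal{R}_{\mu_\alpha}(g)-\mathcal{R}_\mu(g)|,
\]
so $\mathcal{R}_{\mu_\alpha}(f)\to\mathcal{R}_\mu(f)$ and $\mu_\alpha\Longrightarrow\mu$.

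\textbf{Compact $X$: the topologies coincide.} The previous step shows that the identity map $(\mathcal{M},wk)\to(\mathcal{M},d_{\text{KR}})$ has a continuous inverse. By Remark \ref{McompT2} we may pass to the closure, which is compact Hausdorff in the weak topology. It therefore suffices to show that weak convergence implies $d_{\text{KR}}$-convergence, and this is the step I expect to be the main obstacle.

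Suppose $\mu_\alpha\Longrightarrow\mu$. By Arzel\`a--Ascoli, the family $\{f\in Lip_1: 0\le f\le D\}$ is compact in $C(X)$. For constant shifts we use $\int (c+g)\,d\mu = c\,\mu(X)+\int g\,d\mu$; the masses converge because $1\in C(X)$, and $c$ ranges over a set bounded relative to the functions involved (after reduction, $c\le \|f\|$). This reduces matters to functions $g$ in a compact set $L$. Given $\epsilon$, pick an $\epsilon$-net $g_1,\dots,g_n$ of $L$. Since $\mathcal{R}_\lambda$ is $M$-Lipschitz (up to a factor $2$) uniformly in $\lambda\in\mathcal{M}$, we get
\[
\sup_{g\in L}|\mathcal{R}_{\mu_\alpha}(g)-\mathcal{R}_\mu(g)|\le 4M\epsilon+\max_i|\mathcal{R}_{\mu_\alpha}(g_i)-\mathcal{R}_\mu(g_i)|,
\]
and the maximum tends to $0$. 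Hence weak convergence gives uniform convergence over $Lip_1$, that is, $d_{\text{KR}}(\mu_\alpha,\mu)\to0$, and the two topologies coincide.
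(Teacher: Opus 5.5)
The step that fails is the compact case of the third assertion, together with your claim that reading $d_{\text{KR}}$ as an extended metric is harmless there. After writing $f=c+g$ with $c=\min f$ you have
\[
\int_X f\,d\mu_\alpha-\int_X f\,d\mu=c\,\bigl(\mu_\alpha(X)-\mu(X)\bigr)+\Bigl(\int_X g\,d\mu_\alpha-\int_X g\,d\mu\Bigr).
\]
Here $c$ is unbounded as $f$ ranges over $Lip_1(X,d)\cap C(X)$, since every constant belongs to this class. The bound $c\le\|f\|$ does not help, because $\|f\|$ is unbounded too. Weak convergence only gives $\mu_\alpha(X)\to\mu(X)$, so it does not make the first term uniformly small.

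This cannot be repaired, because the assertion is false without constant mass. Take $\mathcal{M}=\{t\delta_x: t\in[0,1]\}$ on a compact $X$. Then $\frac{n-1}{n}\delta_x\Longrightarrow\delta_x$, yet $d_{\text{KR}}(\frac{n-1}{n}\delta_x,\delta_x)=\infty$ for every $n$ (test with $f\equiv c$, $c\to\infty$). In fact the $d_{\text{KR}}$-topology on this family is discrete. The right conclusion from your own finiteness observation is that, for compact $X$, the statement needs $\mu(X)$ to be constant on $\mathcal{M}$. That is the situation for $\mathbf{TM}_1(X)$, $\mathbf{DTM}_1(X)$ and $P(X)$, which is how the result is used later. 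Under that hypothesis the $c$-term cancels exactly. Your Arzel\`a--Ascoli and $\epsilon$-net argument is then a complete, self-contained proof of the third assertion, whereas the paper only refers to an outside proposition. The detour through the weakly compact closure does no work: once net convergence is shown in both directions, the topologies coincide.

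Everything else is correct, and your finiteness remark exposes a flaw in the paper's own argument. The paper bounds $\|f\|\le 2r$ by choosing $y$ with $f(y)=0$, and such a point need not exist when $X$ is compact. Your definiteness argument also supplies a step the paper skips: getting from some $f\in C_c^+(X)$ with different integrals to a $Lip_1$ witness, via density, homogeneity and the Lipschitz estimate.

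For the second assertion your route is more direct than the paper's. The paper uses Lipschitz approximation to obtain $\liminf\mu_\alpha(U)\ge\mu(U)$ for open $U$ and $\limsup\mu_\alpha(K)\le\mu(K)$ for compact $K$, then invokes a portmanteau theorem for deficient topological measures. You instead check convergence of $\mathcal{R}_{\mu_\alpha}(f)$ for every $f\in C_0^+(X)$, which is the definition of the weak topology.

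One small point there. For deficient $\lambda$, the estimate $|\mathcal{R}_\lambda(f)-\mathcal{R}_\lambda(g)|\le\|f-g\|\,\lambda(X)$ with $f\in C_0^+(X)$ is not among the listed properties; they give it only on $C_c^+(X)$ or for compact $X$. It does follow at once: put $\delta=\|f-g\|$, note $g^{-1}((t,\infty))\supseteq f^{-1}((t+\delta,\infty))$, and use $\mathcal{R}_\lambda(f)=\int_0^b\lambda(f^{-1}((t,\infty)))\,dt$.
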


\begin{proof}
Suppose $X$ is contained in a ball $B(x_0, r)$ and $\mu(X) \le M$ for every $ \mu \in \mathcal{M} $.
Take (a nonnegative) $ f \in Lip_1(X,d) \cap C_c(X) $, and let 
$y, z$ be such that $f(y) =0, | f(z) | = \| f\|$. Then $\| f \|  =  |f(z) - f(y)|  \le d(z, y)  \le 2r$, and  
so by (\ref{estin}) $d_{\text{KR}} (\mu, \nu) \le 4rM$. Thus, $d_{\text{KR}}$ is a real-valued map. 

We shall check that  $d_{\text{KR}} (\mu, \nu) = 0 $ implies $\mu = \nu$ (other axioms of a metric are obvious).
If $ \mu \neq \nu$ then by (\ref{EqByInt}) there is $ f \in C_c^+(X)$ such that $ \int_X f \, d \mu  \neq \int_X f \, d \nu$, 
and so  $d_{\text{KR}} (\mu, \nu) > 0 $. 
Thus,  $d_{\text{KR}}$ is a metric.

Now suppose $ d_{\text{KR}} ( \mu_{\alpha}, \mu) \longrightarrow 0$. Let $ U \in \mathscr{O}(X), \epsilon >0$. 
For  $ f \in C_c^+(X)$ satisfying  $ supp \, f  \subseteq U, \, f \le 1_U, \,  \mu(U) <  \int_X f \, d\mu  + \epsilon$, 
by (\ref{LipCc}) pick a compactly supported Lipschitz function $h$  such that 
$ | \int_X f \, d \lambda - \int_X h \, d \lambda | < \epsilon$ for any $ \lambda \in  \mathcal{M} $. 
From  $d_{\text{KR}} ( \mu_{\alpha}, \mu) \longrightarrow 0$
we have $ | \int_X h \, d \mu_{\alpha} - \int_X h\, d \mu | < \epsilon $ for all $ \alpha \ge \alpha_0$. 
Then  for  $ \alpha \ge \alpha_0$
$$ \mu_{\alpha} (U) \ge   \int_X f\, d \mu_{\alpha} \ge   \int_X h \, d \mu_{\alpha} - \epsilon >  
\int_X h \, d \mu - 2 \epsilon  \ge  \int_X f\, d \mu - 3 \epsilon \ge \mu(U) - 4 \epsilon.$$
It follows that $ \liminf \mu_{\alpha} (U) \ge \mu(U)$. Similarly one can show that $ \limsup \mu_{\alpha} (K) \le \mu(K)$ for any compact $K$.  
By \cite[Th. 2.1]{Butler:WkConv},  $ \mu_{\alpha} \Longrightarrow \mu$.  
The last statement now can be proved as in \cite[Prop. 1.10]{DickZap}. 
\end{proof}

\begin{theorem} \label{completeMS} 
Suppose $X$ is a locally compact bounded metric space, and $\mathcal{M}$ is a uniformly bounded in variation 
family of (deficient) topological measures on $X$. Then $(\mathcal{M}, d_{\text{KR}})$ is a complete metric space.
\end{theorem}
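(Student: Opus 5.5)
The plan is to combine the weak compactness of uniformly bounded families (Remark \ref{McompT2}) with the uniform Cauchy estimate coming from the definition of $d_{\text{KR}}$. I read $\mathcal{M}$ as weakly closed in $\mathbf{DTM}(X)$ (resp. $\mathbf{TM}(X)$); the model case is the whole ball $\{\mu : \|\mu\| \le M\}$. By Remark \ref{McompT2} such an $\mathcal{M}$ is a compact Hausdorff space in the weak topology. This closedness is genuinely needed: an arbitrary subfamily need not be complete, since a limit could escape it. By Theorem \ref{KRisMetr}, $d_{\text{KR}}$ is a metric on $\mathcal{M}$, since $X$ is bounded.

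Let $(\mu_n)$ be a $d_{\text{KR}}$-Cauchy sequence in $\mathcal{M}$. By weak compactness, $(\mu_n)$ has a weak cluster point $\mu \in \mathcal{M}$, so some subnet $(\mu_{n_\beta})$ converges weakly to $\mu$. By Definition \ref{defwk}, this means $\int_X f\, d\mu_{n_\beta} \to \int_X f\, d\mu$ for every $f \in C_0^+(X)$. The same holds for every $f \in C_0(X)$ in the topological measure case, by Remark \ref{rmwkbase}. In particular it holds for every $f \in Lip_1(X,d)\cap C_c(X)$ (nonnegative in the deficient case).

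Now fix $\epsilon>0$ and choose $N$ with $d_{\text{KR}}(\mu_n,\mu_m) \le \epsilon$ for all $n,m\ge N$. Fix $n \ge N$ and an admissible $f$. For every $\beta$ with $n_\beta \ge N$ we have
\begin{align*}
\Big|\int_X f\,d\mu_n - \int_X f\,d\mu\Big| \le \epsilon + \Big|\int_X f\,d\mu_{n_\beta} - \int_X f\,d\mu\Big| .
\end{align*}
Since $(n_\beta)$ is a subnet, $n_\beta \ge N$ eventually, and the last term tends to $0$ along $\beta$ because $f$ is fixed. Letting $\beta$ run gives $|\int_X f\,d\mu_n - \int_X f\,d\mu| \le \epsilon$. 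This bound is independent of $f$, so taking the supremum over admissible $f$ yields $d_{\text{KR}}(\mu_n,\mu)\le\epsilon$ for all $n\ge N$. Hence $\mu_n \to \mu$ in $d_{\text{KR}}$, and $\mu\in\mathcal{M}$.

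The main obstacle is the exchange of limits. Weak convergence is only pointwise in $f$, while $d_{\text{KR}}$ needs a bound uniform in $f$. The point of the argument is that the Cauchy bound is already uniform in $f$, so one passes to the limit along the subnet for each fixed $f$ before taking the supremum. A secondary point is that we use only a cluster point, not convergence of the whole sequence in the weak topology; $d_{\text{KR}}$-convergence then follows without that. Uniqueness of the limit is automatic because $d_{\text{KR}}$ is a metric.
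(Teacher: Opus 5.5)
Your proof is correct, but it reaches the limit by a different route than the paper. The paper constructs the limit directly. For compactly supported Lipschitz $g$ it sets $L(g)=\lim_n\int_X g\,d\mu_n$, and extends $L$ to $C_0(X)$ using the density of Lipschitz functions (Remark \ref{LipDense}) and the estimate (\ref{LipCc}). It then checks by approximation that $L$ is a (p-conic) quasi-linear functional, and takes $\mu$ to be the (deficient) topological measure corresponding to $L$.

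You instead take $\mu$ to be a weak cluster point, using the compactness in Remark \ref{McompT2}. That compactness already packages Tychonoff together with the fact that the quasi-linearity axioms survive pointwise limits. So you avoid the density extension and the axiom-by-axiom verification. The paper's construction, in exchange, shows directly that $\int_X g\,d\mu_n$ converges along the whole sequence for every Lipschitz $g$, without appealing to compactness.

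The last step is the same in both proofs: the Cauchy bound, which is uniform in $f$, is passed to the limit for each fixed $f$ before taking the supremum. The paper does this by choosing an index $k$ depending on $f$; you do it by running along the subnet.

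Your point that some closedness of $\mathcal{M}$ is needed is correct. The paper's proof produces $\mu$ with $\mu(X)\le M$ but never checks $\mu\in\mathcal{M}$. For example, $\mathcal{M}=\{\delta_{1/n}: n\ge 1\}$ on $[0,1]$ is $d_{\text{KR}}$-Cauchy but has no limit in $\mathcal{M}$. Your argument also works under the weaker assumption that $\mathcal{M}$ is merely $d_{\text{KR}}$-closed. Run it in the full ball $\{\mu:\|\mu\|\le M\}$, then use closedness to conclude that the limit lies in $\mathcal{M}$.
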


\begin{proof}
By Theorem \ref{KRisMetr} $d_{\text{KR}}$ is a metric on $\mathcal{M}$.   
Suppose $(\mu_n)$ is a fundamental sequence in $(\mathcal{M}, d_{\text{KR}}). $ 
For a compactly supported Lipschitz function $g$  the sequence $( \int_X g\, d \mu_n)$ is fundamental, 
and we define $L(g) = \lim_{n \rightarrow \infty} \int_X g\, d \mu_n$.
For $f \in C_0(X)$ and a sequence $(g_i)$ of compactly supported Lipschitz functions with 
$\| f - g_i \| \rightarrow 0$ (such a sequence exists by Remark \ref{LipDense}), 
the sequence $ (L(g_i))$ is fundamental (use (\ref{LipCc})), so we define $L(f) = \lim_{i \rightarrow \infty} L(g_i)$.
If $(g_i)$ and $(h_i)$ are two sequences converging to $f$, where $ g_i, h_i \in Lip  \, \cap C_c(X)$, then 
by (\ref{LipCc})   $\lim_{i \rightarrow \infty} L(g_i) = \lim_{i \rightarrow \infty} L(h_i)$, and $L$ is well-defined on $C_0(X)$.

We shall show that $L$ is a (p-conic) quasi-linear functional. Suppose $f \, g = 0, \, f,g \in C_0^+(X)$. We may choose 
compactly supported Lipschitz functions $f_i,  g_i  \ge 0$ converging to $f, g$ respectively, such that  $f_i \, g_i = 0$.
Then by Definition \ref{cqlf} $L(f_i + g_i) =L(f_i) + L(g_i)$, and so $L(f+g) =  \lim_{i \rightarrow \infty} L(f_i + g_i)  = L(f) + L(g)$. 
If $0 \le g \le f, \ f,g \in C_0(X)$ then we may choose $g_i, f_i$ such that $ 0 \le g_i \le f_i$, so $L(g_i) \le L(f_i)$, which gives $L(g) \le L(f)$.
Now suppose $ \phi \circ f, \psi \circ f \in A^+(f)$ where $ \phi, \psi$ continuous (nondecreasing) functions on $[a,b] = \overline{f(X)}$, 
and  $ \phi(0) = \psi(0) = 0$ if $X$ is noncompact. Pick a sequence $(\phi_j)$ of (nondecreasing) Lipschitz functions on $[a,b]$, 
$\phi_j(0) =0$ if $X$ is noncompact so that $\| \phi_j - \phi \| \rightarrow 0$ (for instance, one may use piecewise linear functions as $\phi_j$).    
Using uniform continuity of $ \phi$ and going to subsequences if necessary, we have $ \| \phi_k \circ f_k - \phi \circ f \| \rightarrow 0$, 
and each $\phi_k \circ f_k \in Lip \, \cap C_c(X)$. 
Similarly approximate $ \psi \circ f$ by $ \psi_k \circ f_k$. 
Since $L(  \phi_k \circ f_k  +  \psi_k \circ f_k) = L( \phi_k \circ f_k) + L( \psi_k \circ f_k)$, 
we have $L(\phi \circ f + \psi \circ f) = L(\phi \circ f) + L( \psi \circ f)$.
Thus, $L$ is a (p-conic) quasi-linear functional. 

Let $ \mu$ be the (deficient) topological measure corresponding to $L$.
We shall show that  $d_{\text{KR}}(\mu_n, \mu) \rightarrow 0$ as $ n \rightarrow \infty$.
For $ \epsilon >0$ let $n_\epsilon$ be such that $d_{\text{KR}}(\mu_n, \mu_k) < \epsilon$ for any $n,k \ge n_\epsilon$.
Let $f \in Lip \cap C_c(X)$.
Choose $k \ge n_\epsilon$ such that $  | \int_X f \, d\mu_k- L(f)| =  | \int_X f \, d\mu_k- \int_X f\, d \mu| < \epsilon.$ Then for any $n \ge n_\epsilon$ 
$$  |  \int_X f \, d\mu_n - \int_X f\, d \mu |  \le  | \int_X f \, d\mu_n - \int_X f\, d \mu_k | +  | \int_X f \, d\mu_k - \int_X f\, d \mu |   
\le d_{\text{KR}}(\mu_n, \mu_k) + \epsilon < 2 \epsilon.
$$
Thus, $d_{\text{KR}}(\mu_n, \mu) \le 2 \epsilon$.
\end{proof}

From Remark \ref{RemBRT} we see that if $\mu = \alpha_1 \mu_1 + \alpha_2 \mu_2$, where $\alpha_1, \alpha_2 \ge 0$ 
then $ \int f \, d\mu = \alpha_1  \int f \, d\mu_1 +  \alpha_2  \int f \, d\mu_2$ for any $ f \in C_0(X)$ 
and any finite deficient topological measures $\mu_1, \mu_2$. But we can say more. 
  
\begin{lemma} \label{inftsumD}
Suppose $X$ is locally compact,  $\sum_{i=1}^\infty \mu_i(X) < \infty$ where each $\mu_i$ is a (deficient) topological measure. 
Then $\mu = \sum_{i=1}^\infty \mu_i$ is a finite (deficient) topological measure. 
Moreover, $ \int f \, d \mu = \sum_{i=1}^\infty  \int f\, d \mu_i$ for any $ f \in C_0(X)$.
\end{lemma}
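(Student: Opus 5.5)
We define $\mu(A) = \sum_{i=1}^\infty \mu_i(A)$ for $A \in \mathscr{O}(X) \cup \mathscr{C}(X)$. Since $0 \le \mu_i(A) \le \mu_i(X)$, the series converges and $\mu(X) = \sum_i \mu_i(X) < \infty$. Conditions \ref{TM1} (for compact $A,B$ in the deficient case, or for $A,B,A\sqcup B \in \mathscr{K}(X)\cup\mathscr{O}(X)$ in the topological measure case) follow termwise from additivity of each $\mu_i$, because sums of convergent nonnegative series add. The work is in the regularity conditions \ref{TM2} and \ref{TM3}, where a supremum (resp. infimum) must be interchanged with an infinite sum. For both we use a tail estimate. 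Given $\epsilon>0$, choose $N$ with $\sum_{i>N}\mu_i(X) < \epsilon$.

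For \ref{TM2}, fix an open $U$. For each $i \le N$ pick a compact $K_i \subseteq U$ with $\mu_i(K_i) > \mu_i(U) - \epsilon/N$, and set $K = K_1 \cup \dots \cup K_N \subseteq U$. By monotonicity (Remark \ref{tausm}),
\[
\mu(K) \ge \sum_{i\le N}\mu_i(K) \ge \sum_{i \le N}\mu_i(U) - \epsilon \ge \mu(U) - 2\epsilon .
\]
Monotonicity of $\mu$, which holds termwise, gives the reverse inequality. For \ref{TM3}, fix a closed $F$. Pick open sets $U_i \supseteq F$ with $\mu_i(U_i) < \mu_i(F) + \epsilon/N$ for $i \le N$, and let $U = \bigcap_{i\le N} U_i$. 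Then
\[
\mu(U) \le \sum_{i\le N}\mu_i(U_i) + \epsilon \le \mu(F) + 2\epsilon .
\]
Hence $\mu$ is a finite (deficient) topological measure.

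For the integral identity, we first take $f \in C_0^+(X)$ (or $f \in C_b(X)$) and use the formula of Remark \ref{RemBRT}\ref{prt1} with $f(X) \subseteq [a,b]$:
\[
\int_X f\,d\mu = \int_a^b R_{1,\mu,f}(t)\,dt + a\mu(X).
\]
Here $R_{1,\mu,f}(t) = \mu(f^{-1}((t,\infty))) = \sum_i R_{1,\mu_i,f}(t)$ for every $t$, and each term is nonnegative and bounded by $\mu_i(X)$. By the monotone convergence theorem (or dominated convergence with dominating function $\sum_i\mu_i(X)$ on the bounded interval $[a,b]$), integration and summation interchange. Together with $a\mu(X) = \sum_i a\mu_i(X)$ this gives
\[
\int_X f\,d\mu = \sum_i\Bigl(\int_a^b R_{1,\mu_i,f}\,dt + a\mu_i(X)\Bigr) = \sum_i \int_X f\,d\mu_i .
\]
Since the formula holds for every $f \in C_b(X)$, this covers all $f\in C_0(X)$ directly, with no splitting into $f^+$ and $f^-$ needed. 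This matters in the deficient case, where $\int f\,d\mu$ need not equal $\int f^+\,d\mu - \int f^-\,d\mu$.

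The main obstacle is the regularity step, namely swapping sup/inf with the infinite sum. The uniform tail bound $\sum_{i>N}\mu_i(X)<\epsilon$, combined with finite unions of compacts and finite intersections of opens, resolves it. Everything else is termwise.
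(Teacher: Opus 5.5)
Your proof is correct. It differs from the paper's in two respects.

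For the first assertion, the paper does not argue at all. It cites \cite[L.~5.1]{Butler:WkConv}. You prove it directly, and your argument for \ref{TM2} and \ref{TM3} is sound: approximate the first $N$ terms by a finite union of compacts (resp.\ a finite intersection of open sets) and absorb the remainder using $\sum_{i>N}\mu_i(X)<\epsilon$.

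For the integral identity, both proofs rest on the same two ingredients:
\begin{itemize}
\item the formula $\int_X f\,d\mu=\int_a^b R_{1,\mu,f}(t)\,dt+a\mu(X)$ from Remark \ref{RemBRT}\ref{prt1};
\item the pointwise identity $R_{1,\mu,f}=\sum_i R_{1,\mu_i,f}$.
\end{itemize}
They differ in how the sum and the integral are interchanged.
\begin{itemize}
\item The paper does it by hand. It compares $\mu$ with the partial sum $\mu'=\sum_{i\le N}\mu_i$ and uses $R_{1,\mu,f}\le R_{1,\mu',f}+\epsilon$ to get $\int f\,d\mu\le\int f\,d\mu'+b\epsilon$. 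It then controls the tail $\sum_{i>N}\int f\,d\mu_i$ by (\ref{estin}). It states only this one-sided estimate and leaves the matching lower bound implicit.
\item You apply monotone convergence to the nonnegative, monotone functions $R_{1,\mu_i,f}$. This gives the identity exactly, in one step, with no two-sided $\epsilon$-bookkeeping.
\end{itemize}

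Your remark about working with the formula on $C_b(X)$ is also correct. It handles sign-changing $f\in C_0(X)$ directly, which is necessary in the deficient case, where $\int f\,d\mu=\int f^+\,d\mu-\int f^-\,d\mu$ can fail.

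Net comparison: your proof is self-contained and slightly cleaner. The paper's is shorter on the page because it cites the first part.
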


\begin{proof}
The first statement is \cite[L. 5.1]{Butler:WkConv}. Let $ f \in C_0(X), f(X) \subseteq [a,b]$. 
For $ \epsilon >0$ let $N$ be such that
$\sum_{i=N+1}^\infty \mu_i(X) < \epsilon$, and let $\mu' = \sum_{i=1}^N \mu_i$.  By Remark \ref{RemBRT}\ref{prt1} we see that 
$R_{1, \mu, f}(t) \le R_{1, \mu', f}(t) + \epsilon$, and  
$$  \int_X f \, d\mu \le  \int_X f \, d\mu'  + b \epsilon. $$
Then using formula (\ref{estin}) we have:
$$ | \int_X f \, d\mu -  \sum_{i=1}^\infty  \int f\, d \mu_i |  \le |  \int_X f \, d\mu -  \int_X f \, d\mu' |  + | \sum_{i=N+1}^\infty  \int f\, d \mu_i | 
 \le b \epsilon + \| f \| \epsilon, $$
which finishes the proof.
\end{proof}

\begin{corollary} \label{inftSuDTM}
Suppose $X$ is locally compact, 
family $\mathcal{M}$ is a uniformly bounded in variation family of (deficient) topological measures on $X$, and 
$\sum_{i=1}^\infty \alpha_i \le 1, \, \alpha_i \ge 0$.
Then $\mu = \sum_{i=1}^\infty  \alpha_1 \mu_i$  belongs to $\mathcal{M}$,  and 
$ \int f \, d \mu = \sum_{i=1}^\infty  \alpha_i  \int f\, d \mu_i$ for any $ f \in C_0(X)$.  
\end{corollary}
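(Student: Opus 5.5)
The plan is to reduce the corollary directly to Lemma \ref{inftsumD} by applying it to the scaled family $\nu_i = \alpha_i \mu_i$, where $\mu_i \in \mathcal{M}$. Let $M$ be the constant from Definition \ref{unifbdv}, so $\mu_i(X) \le M$ for all $i$.

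First, each $\nu_i = \alpha_i \mu_i$ is again a (deficient) topological measure, since $\alpha_i \ge 0$ and conditions \ref{TM1}--\ref{TM3} are preserved under multiplication by a nonnegative scalar. Moreover
$$\sum_{i=1}^\infty \nu_i(X) = \sum_{i=1}^\infty \alpha_i \mu_i(X) \le M \sum_{i=1}^\infty \alpha_i \le M < \infty .$$
Lemma \ref{inftsumD} then shows that $\mu = \sum_i \nu_i$ is a finite (deficient) topological measure with $\int f\, d\mu = \sum_i \int f \, d\nu_i$ for every $f \in C_0(X)$.

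Next, to get $\int f\, d\nu_i = \alpha_i \int f\, d\mu_i$, I will use the formula of Remark \ref{RemBRT}\ref{prt1}. There $R_{1,\alpha\lambda,f}(t) = \alpha R_{1,\lambda,f}(t)$ and $(\alpha\lambda)(X) = \alpha\,\lambda(X)$, so
$$\int_a^b R_{1,\alpha\lambda,f}(t)\,dt + a\,(\alpha\lambda)(X) = \alpha\left(\int_a^b R_{1,\lambda,f}(t)\,dt + a\,\lambda(X)\right),$$
which gives $\int f\, d(\alpha\lambda) = \alpha \int f\, d\lambda$. This is also the case $\alpha_2 = 0$ of the finite-combination remark just before Lemma \ref{inftsumD}.

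Finally, for membership, $\|\mu\| = \mu(X) = \sum_i \alpha_i \mu_i(X) \le M$, so $\mu$ satisfies the same variation bound. The main subtlety is interpretive rather than technical. The statement that $\mu$ "belongs to $\mathcal{M}$" must be read with $\mathcal{M}$ being (or closed under such combinations as) the family of all (deficient) topological measures of norm at most $M$; I would state this explicitly. Under that reading the bound $\mu(X) \le M$ finishes the proof.
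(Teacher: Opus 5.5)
Your argument is correct and is essentially the paper's own: the corollary comes with no separate proof and is meant as an immediate consequence of Lemma \ref{inftsumD} applied to $\alpha_i\mu_i$, together with $\int f\,d(\alpha\lambda)=\alpha\int f\,d\lambda$ and $\mu(X)\le M\sum_i\alpha_i\le M$. Your interpretive caveat is warranted, because for an arbitrary uniformly bounded family such as $\mathcal{M}=\{\delta_x,\delta_y\}$ the membership claim fails, so $\mathcal{M}$ must be read as a full ball $\{\lambda:\|\lambda\|\le M\}$, with $\alpha_1$ read as $\alpha_i$ and $\mu_i\in\mathcal{M}$ (for a normalized family like $\mathbf{DTM}_1(X)$ one would also need $\sum_i\alpha_i=1$).
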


\begin{definition} \label{DefMarkovOp}
Let $X,Y$ be locally compact. 
An operator $S: DTM(Y) \longrightarrow DTM(X)$ is called a Markov operator if 
$ S(a \mu + b \nu) = a S(\mu) + b S(\nu)$ for $a,b \ge 0, \mu, \nu \in DTM(Y)$.
A Markov operator $S: DTM(Y) \longrightarrow DTM(X)$ is called a Markov-Feller operator if there is an operator
$T: C_0(X) \longrightarrow C_0(Y)$ such that $ \int_X f \, d(S(\mu)) = \int_Y T(f) \, d\mu $ for $g \in C_0(X), \mu \in DTM(Y)$.  
The operator $T$ is called dual to $S$.
\end{definition}

\noindent
Definition \ref{DefMarkovOp} is closely related to \cite[p. 345]{LasotaMyjakSzarek}, where these operators are defined for spaces of measures. 

\begin{definition}
Let $X,Y$ be locally compact. 
A (d-) image transformation system is  
$\{ X,  Y, q_i^*, \alpha_i \} $ where $q_i$ is a (d-) image transformation from $X$ to $Y$, 
$\alpha_i \ge 0$ for $ i = 1, 2, \ldots$ and $\sum_{i=1}^\infty \alpha_i \le 1$. 
If $X =Y$ we write $\{ X, q_i^*, \alpha_i \} $. 
An image transformation system has the contractivity factor  $s$ if each $q_i$ has the  contractivity factor $ s_i \le s $.  
We associate with the (d-) image transformation system $\{ X, Y, q_i^*, \alpha_i \} $ an operator 
$S:TM(Y) \longrightarrow TM(X)$ (resp.,  $S: DTM(Y) \longrightarrow DTM(X))$  
defined by
\begin{align} \label{MarkovS} 
S(\mu) = \sum_{i=1}^\infty \alpha_i q_i^* (\mu). 
\end{align}
\end{definition} 

\begin{remark}
If $X=Y$ is a compact metric space, $ \mu$ is a measure, $\sum_{i=1}^n \alpha_i = 1, \,  \alpha_i > 0$,
and $q_i= u^{-1}$ is the inverse of a contraction $u_i$ on $X$ for $i=1, \ldots, n$, then we obtain the well-known Markov operator associated with the
IFS with probabilities $(X, u_1, \ldots, u_n, \alpha_1, \ldots, \alpha_n)$  (see, for example, \cite[Ch.9, \S 6]{Barnsley}). 
\end{remark}

\begin{theorem} \label{ITcontin}
Suppose $X, Y$ are locally compact metric spaces,   $\mathcal{M}$ is a uniformly bounded in variation family of (deficient) topological measures on $Y$. 
Let $\{ X, Y, q_i^*, \alpha_i \} $ be a (d-) image transformation system with  $\alpha_i \ge 0$ and $\sum_{i=1}^\infty \alpha_i < \infty$.
Then the operator $S: (\mathcal{M}, wk) \longrightarrow (TM(X), wk)$ (resp.,  $S: (\mathcal{M}, wk) \longrightarrow (DTM(X), wk)$) 
given by (\ref{MarkovS}) 
is a continuous Markov operator which preserves countable convex linear combinations. 
The image of $S$ is a uniformly bounded in variation family of (deficient) topological measures on $X$. 
On the collection of measures from $\mathcal{M}$, $S$ is a Markov-Feller operator with nonlinear
dual operator $T = \sum_{i=1}^\infty \alpha_i \theta_i$, where each $\theta_i$ is a (conic) quasi-homomorphism corresponding to $q_i$. 
\end{theorem}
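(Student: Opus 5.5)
The plan is to first show that $S$ is well defined with image in a uniformly bounded family, then get linearity, then continuity, and finally the Markov--Feller property.

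\emph{Well-definedness and boundedness.} Let $\|\mu\| \le M$ for all $\mu \in \mathcal{M}$. For each $i$, $q_i^*\mu$ is a (deficient) topological measure on $X$ by Definition \ref{q*}. By Theorem \ref{ITqh}\ref{ITqh12} we have $q_i(X)=Y$, so
$$(q_i^*\mu)(X) = \mu(q_i(X)) = \mu(Y) \le M.$$
Hence
$$\sum_i \alpha_i (q_i^*\mu)(X) \le M\sum_i\alpha_i < \infty.$$
Lemma \ref{inftsumD} then shows that $S(\mu)$ is a finite (deficient) topological measure with $\|S(\mu)\| \le M\sum_i \alpha_i$. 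So the image of $S$ is uniformly bounded in variation.

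\emph{Preservation of countable convex combinations.} Take $\mu = \sum_j \beta_j \mu_j$ with $\beta_j\ge 0$ and $\sum_j\beta_j\le 1$; this lies in $\mathcal{M}$ by Corollary \ref{inftSuDTM}. Each $q_i^*$ is evaluation at the set $q_i(A)$, so it commutes with countable sums of set functions. Thus
$$S(\mu)(A)=\sum_i\alpha_i\sum_j \beta_j \mu_j(q_i(A)).$$
All terms are nonnegative, so Tonelli lets us interchange the sums and obtain $\sum_j\beta_j S(\mu_j)(A)$. Finite nonnegative combinations, i.e. the Markov property, are the special case.

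\emph{Continuity.} Weak convergence is tested against $\mathcal{R}(f)$ for $f\in C_0^+(X)$. Let $\mu_t\Longrightarrow\mu$ in $\mathcal{M}$. By Lemma \ref{inftsumD},
$$\int f\,dS(\mu_t) = \sum_i \alpha_i \int f\, d q_i^*\mu_t.$$
By (\ref{estin}), each term is bounded by $\alpha_i\|f\|M$, uniformly in $t$. Given $\epsilon>0$, choose $N$ with $\|f\|M\sum_{i>N}\alpha_i<\epsilon$. The finite partial sum converges to the corresponding sum for $\mu$ by Theorem \ref{adjCont}. Hence the $\limsup$ of the difference is at most $2\epsilon$. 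This uniform tail control is the main point, since nets have no dominated convergence available, and it is exactly where uniform boundedness of $\mathcal{M}$ is used.

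\emph{Markov--Feller property.} For measures $\mu\in\mathcal{M}$, combine Lemma \ref{inftsumD} with Theorem \ref{ITqh}\ref{ITqh5} to get
$$\int_X f\,dS(\mu)=\sum_i\alpha_i\int_Y\theta_i(f)\,d\mu.$$
Since $\mu$ is a measure, integration against $\mu$ is linear and continuous for uniform convergence. We also have $\|\theta_i(f)\|\le\|f\|$ by \ref{ITqh6}, so the series $T(f)=\sum_i\alpha_i\theta_i(f)$ converges uniformly, hence in $C_0(Y)$, since $C_0(Y)$ is closed. Exchanging the sum and the integral then gives $\int_X f\,dS(\mu)=\int_Y T(f)\,d\mu$.

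\emph{Nonlinearity of $T$ and the remaining obstacle.} $T$ is nonlinear because the $\theta_i$ generally are. The step needing the most care is the conic case, where $T$ is defined on $C_0^+(X)$; there one should check that working with nonnegative $f$ suffices, per Remark \ref{RemBRT}.
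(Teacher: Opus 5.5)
Your proposal is correct and follows essentially the same route as the paper. The shared steps are the bound $S(\mu)(X) \le M\sum_i \alpha_i$, weak continuity via the uniform tail estimate $\|f\| M \sum_{i>N}\alpha_i < \epsilon$ combined with continuity of each $q_i^*$ (Theorem \ref{adjCont}), and identification of the dual operator via Lemma \ref{inftsumD} and Theorem \ref{ITqh}. You also justify three steps the paper leaves implicit: the Tonelli interchange for countable convex combinations, applying Lemma \ref{inftsumD} directly so that the hypothesis $\sum_i \alpha_i < \infty$ (rather than $\le 1$) is covered, and the uniform convergence of $\sum_i \alpha_i \theta_i(f)$ in $C_0(Y)$ from $\|\theta_i(f)\| \le \|f\|$, which licenses swapping sum and integral.
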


\begin{proof}
Let  $ \| \mu \|  \le L$ for each $ \mu \in \mathcal{M}$. Then $S(\mu)(X) \le L  \sum_{i=1}^\infty \alpha_i $, 
so $S(\mathcal{M})$ is a uniformly bounded in variation family of (deficient) topological measures on $X$.
By Corollary \ref{inftSuDTM}, for $ f \in C_0(X)$ 
\begin{align} \label{INTdSmu}
 \int_X f \, d(S(\mu)) = \sum_{i=1}^\infty \alpha_i \int_X f \, d(q_i^* \mu).
\end{align}

By Corollary \ref{inftSuDTM},  a countable convex linear combination of members of $\mathcal{M}$ is in $\mathcal{M}$, 
and it is easy to see that $S$ preserves countable convex linear combinations. 
Suppose $ \mu_t \Longrightarrow \mu$. 
Let $ f \in C_0(X)$. For $ \epsilon >0$ pick $n$ such that 
$ L \| f \| \sum_{i=n+1}^{\infty} \alpha_i  < \epsilon$. 
Then $| \sum_{i=n+1}^\infty \alpha_i \int_X f \, d(q_i^* \nu)| < \epsilon$ for any $ \nu \in  \mathcal{M} $.
By Theorem \ref{adjCont} each  $q_i^*$ is continuous, so there is $ t_0$ such that 
$ \sum_{i=1}^n \alpha_i  |  \int_X f \, d(q_i^* \mu_t) -  \int_X f \, d(q_i^* \mu) | < \epsilon$ for $ t \ge t_0$.
Then for  $ t \ge t_0$
\begin{align*}
| \int_X f \, d(S(\mu_t)) -  \int_X f \, d(S(\mu)) | &= 
|\sum_{i=1}^{\infty} \alpha_i \int_X f \, d(q_i^*\mu_t) -  \sum_{i=1}^{\infty} \alpha_i \int_X f \, d(q_i^*\mu) | \\
& \le   \sum_{i=1}^n |  \alpha_i   \int_X f \, d(q_i^* \mu_t) -  \int_X f \, d(q_i^* \mu) | + 2 \epsilon < 3 \epsilon. 
\end{align*}
Thus,  $S( \mu_t) \Longrightarrow  S(\mu)$, and $S$ is continuous. 

Now let $ \mu$ be a measure from $\mathcal{M}$.
From formula (\ref{INTdSmu}) and Theorem \ref{ITqh}\ref{ITqh5} we have:
$$ \int_X f \, d(S(\mu)) = \sum_{i=1}^\infty \alpha_i \int_X f \, d(q_i^* \mu) =  \sum_{i=1}^\infty \int_Y  \alpha_i  \theta_i(f) \,  d\mu 
= \int_Y  \sum_{i=1}^\infty  \alpha_i  \theta_i(f) \,  d\mu,$$
so $ T= \sum_{i=1}^\infty \alpha_i \theta_i$ is the dual operator for $S$. $T$ is nonlinear, since $\theta_i$ are nonlinear. 
\end{proof}

\begin{remark}
1) Theorem \ref{ITcontin} generalizes the fact that  $q^*$  on finite (deficient) topological measures is a Markov-Feller operator, 
whose nonlinear dual operator is $\theta$ (see \cite[Sect. 5]{Butler:QLM}). \\
2) We would like to point out that $q^* \mu$, in general, is not a measure even if $ \mu$ is (see, for instance, examples in \cite[Sect. 2]{Butler:QLM}). 
\end{remark}

\begin{definition}
A (deficient) topological measure is called representable if it is in the closed convex hull of simple (deficient) topological measures. 
\end{definition}

\begin{theorem} \label{ITnet}
Suppose $\mu$ is a representable (deficient) topological measure on a compact space $X$.
Then there is a net of Markov operators $(S_t)$  
such that each $S_t:  \mathbf{TM}_1(X) \longrightarrow \mathbf{TM}_1(X)$ (resp., $S_t:  \mathbf{DTM}_1(X) \longrightarrow \mathbf{DTM}_1(X)$)
has an invariant (deficient) topological measure $\mu_t$, and
$ \mu_t \Longrightarrow \mu$. 
\end{theorem}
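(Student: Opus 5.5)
Since $\mu$ is representable, it lies in the weak closure of the convex hull of $X^{\flat}$ (resp. $X^\sharp$). Take a net $(\mu_t)$ of finite convex combinations $\mu_t = \sum_{i=1}^{n_t} \alpha_i^{t} \nu_i^{t}$, with $\nu_i^t$ simple, $\alpha_i^t \ge 0$, $\sum_i \alpha_i^t = 1$, and $\mu_t \Longrightarrow \mu$. Here we may assume $\mu$ is normalized, since representability inside $\mathbf{TM}_1(X)$ or $\mathbf{DTM}_1(X)$ is what makes the statement meaningful. The task is then to build, for each $t$, a Markov operator $S_t$ on $\mathbf{TM}_1(X)$ (resp. $\mathbf{DTM}_1(X)$) with $S_t(\lambda) = \mu_t$ for every $\lambda$. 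Then $\mu_t$ is automatically invariant: $S_t(\mu_t) = \mu_t$.

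For the construction I will use Example \ref{consQst} with $Y = X$, which is compact. For each simple $\nu_i^t$, define $q_i^t(A) = \emptyset$ if $\nu_i^t(A) = 0$ and $q_i^t(A) = X$ if $\nu_i^t(A) = 1$, for $A \in \mathscr{O}(X) \cup \mathscr{K}(X)$. By that example, $q_i^t$ is an image transformation (resp. a d-image transformation) on $X$, and $(q_i^t)^* \lambda = \nu_i^t$ for every normalized $\lambda$. Now let
\[
S_t(\lambda) = \sum_{i=1}^{n_t} \alpha_i^t (q_i^t)^* \lambda,
\]
which is the operator (\ref{MarkovS}) of the (d-) image transformation system $\{X, q_i^t, \alpha_i^t\}$, with $\alpha_i^t = 0$ for $i > n_t$. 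By Theorem \ref{ITcontin}, or directly from Theorem \ref{adjCont}, $S_t$ is a continuous Markov operator, so it preserves nonnegative combinations. Because each adjoint is linear on finite combinations and $\lambda(X) = 1$, we get $S_t(\lambda) = \sum_i \alpha_i^t \nu_i^t = \mu_t$, and $S_t(\lambda)(X) = \sum_i \alpha_i^t = 1$. So $S_t$ maps $\mathbf{TM}_1(X)$ into itself (resp. $\mathbf{DTM}_1(X)$ into itself), and its invariant element is $\mu_t$. In fact it is the unique invariant element, since $S_t$ is constant there.

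The remaining checks are routine. A finite convex combination of (deficient) topological measures is again one, and the convergence $\mu_t \Longrightarrow \mu$ is given by the choice of net.

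The main obstacle is the first step: making sure the closed convex hull is taken in the weak topology, and that the approximating combinations can be chosen with total mass exactly $1$. The approximants are convex combinations of simple measures, so their mass is automatically $1$, and normalization is no problem. The only potential issue is whether the closure is meant with respect to the weak topology of Definition \ref{defwk}. I will read "closed convex hull" in that sense, and then the net exists by the definition of closure.
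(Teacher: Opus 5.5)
Your proposal is correct and matches the paper's proof. The paper also approximates $\mu$ by a net of finite convex combinations $\mu_t=\sum_i \alpha_i \nu_i$ of simple (deficient) topological measures, uses Example \ref{consQst} to get (d-) image transformations $q_i$ with $q_i^*\lambda=\nu_i$ for normalized $\lambda$, and observes that $S_t=\sum_i\alpha_i q_i^*$ fixes $\mu_t$ on $\mathbf{TM}_1(X)$ (resp.\ $\mathbf{DTM}_1(X)$). Your observation that Theorem \ref{adjCont} alone gives the Markov property for these finite sums is a slight improvement: the paper cites Theorem \ref{ITcontin}, which is stated for metric spaces, while here $X$ is only assumed compact.
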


\begin{proof}
We  can use the same argument as in \cite[Pr. 5.12]{Pedersen}  where it is applied for finitely many topological measures on a compact space. 
Since $\mu$ is representable, there is a net $\mu_t  \Longrightarrow \mu$, where each $\mu_t$ is a convex linear combination of simple 
(deficient) topological measures. Write $\mu_t = \sum_{i=1}^n \alpha_i \mu_i$, where  $\mu_i$'s are simple. 
Let $\mathcal{M}$ be $\mathbf{TM}_1(X)$ or $\mathbf{DTM}_1(X)$.   
Define $S_t: \mathcal{M} \longrightarrow \mathcal{M}$ as 
$S_t(\nu) = \sum_{i=1}^n \alpha_i q_i^*(\nu)$, 
where each  $q_i$ is a (d-) image transformation on $X$ 
given by Example \ref{consQst} using $\mu_i$, so $q^* \nu = \mu_i$ for any $ \nu \in \mathcal{M}$.
Then  each $S_t$ is a Markov operator by Theorem \ref{ITcontin},  
$S_t(\mu_t) =\sum_{i=1}^n \alpha_i  q_i^*(\mu_t) =   \sum_{i=1}^n \alpha_i \mu_i = \mu_t \Longrightarrow \mu$,
and $\mu_t$ is an invariant (deficient) topological measure for $S_t$. 
\end{proof}

\begin{theorem} \label{FractIT}
Suppose $X$ is a locally compact bounded metric space. 
Let $\mathcal{M}$ be a uniformly bounded in variation family of (deficient) topological measures on $X$.
Let $\{ X, q_i^*, \alpha_i \} $ be a (d-) image transformation system with contractivity factor  
$s <1, \alpha_i \ge 0$ and $\sum_{i=1}^\infty \alpha_i \le 1$.
The  Markov operator $S$ associated with $\{ X, q_i^*, \alpha_i \} $ 
is a contraction on $(\mathcal{M}, d_{KR})$, and has the unique 
fixed point $\mu_0 \in \mathcal{M}$. If $\mathcal{M}$ is  $\mathbf{DTM}_1(X)$ or $ \mathbf{TM}_1(X)$ then $\mu_0$ is representable. 
\end{theorem}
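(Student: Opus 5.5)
The plan is to apply the Banach Fixed Point Theorem on the complete metric space $(\mathcal{M}, d_{KR})$ supplied by Theorem \ref{completeMS}; this uses that $X$ is locally compact and bounded. Before that, two things must be checked: that $S$ maps $\mathcal{M}$ into itself, and that it is a contraction.

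\textbf{Invariance of $\mathcal{M}$.} I would read $\mathcal{M}$ in the sense of Corollary \ref{inftSuDTM}, that is, as closed under countable sub-convex combinations. The same reading is implicit in Theorem \ref{ITcontin}. The main cases $\mathbf{DTM}_1(X)$ and $\mathbf{TM}_1(X)$ then need a separate argument. For these, Theorem \ref{ITqh}\ref{ITqh12} gives $q_i(X)=X$, so $q_i^*\mu(X)=\mu(X)=1$. By Lemma \ref{inftsumD}, $S(\mu)$ is a finite (deficient) topological measure with $S(\mu)(X)=\sum\alpha_i$. For normalized families I would therefore need $\sum\alpha_i=1$. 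I will note this point: if $\sum\alpha_i<1$, one should take $\mathcal{M}$ to be the class of (deficient) topological measures with $\|\mu\|\le 1$.

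\textbf{Contraction.} Fix $f\in Lip_1(X,d)\cap C_c(X)$, taking $f\ge 0$ in the deficient case. By Lemma \ref{inftsumD} or formula (\ref{INTdSmu}),
$$\Big|\int f\,dS(\mu)-\int f\,dS(\nu)\Big|\le\sum_i\alpha_i\Big|\int f\,dq_i^*\mu-\int f\,dq_i^*\nu\Big|.$$
Each $q_i$ satisfies \ref{E} with $s_i\le s$. By Theorem \ref{sThe}, \ref{E}$\Rightarrow$\ref{C}; more precisely, the computation in that proof bounds each term directly by $s_i\,d_{KR}(\mu,\nu)$, using $\theta_i(f)/s_i\in Lip_1\cap C_c$. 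Summing gives
$$d_{KR}(S\mu,S\nu)\le\Big(\sum\alpha_i\Big)s\,d_{KR}(\mu,\nu)\le s\,d_{KR}(\mu,\nu).$$
One subtlety is that $\theta_i(f)/s_i$ is Lipschitz-$1$ with $s_i$ in place of $s$; since $Lip_{s_i}\subseteq Lip_s$, using $s$ throughout is fine. Banach's theorem then yields a unique fixed point $\mu_0\in\mathcal{M}$, and $\mu_0=\lim S^n\nu$ in $d_{KR}$ for any starting point $\nu$.

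\textbf{Representability.} Take $\mathcal{M}=\mathbf{DTM}_1(X)$ or $\mathbf{TM}_1(X)$ and start from a simple measure $\nu$, for instance $\delta_x$. I claim each $S^n\nu$ lies in the closed convex hull $H$ of simple (deficient) topological measures.

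First, $q_i^*$ sends simple elements to simple elements, since $\nu\circ q_i$ takes values in $\{0,1\}$. Second, each $q_i^*$ is weakly continuous and preserves finite convex combinations by Theorem \ref{adjCont}, so $q_i^*(H)\subseteq H$. Third, $S(\lambda)$ is a norm limit of finite partial sums: by Lemma \ref{inftsumD} and (\ref{estin}), the tail is at most $\sum_{i>N}\alpha_i\to0$ in total variation. Renormalizing these partial sums gives convex combinations in $H$ that converge weakly to $S(\lambda)$, so $S(\lambda)\in H$.

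Finally, $d_{KR}(S^n\nu,\mu_0)\to0$ implies $S^n\nu\Longrightarrow\mu_0$ by Theorem \ref{KRisMetr}. Since $H$ is weakly closed, $\mu_0\in H$, so $\mu_0$ is representable.

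I expect the main obstacle to be this last step together with the normalization issue: the renormalization of partial sums needs $\sum\alpha_i=1$, and one has to be careful that the weak closure is taken inside $\mathbf{DTM}(X)$.
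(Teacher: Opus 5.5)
Your proposal is essentially the paper's proof. It has the same three ingredients: Banach's fixed point theorem on $(\mathcal{M},d_{KR})$ via Theorem \ref{completeMS}; the termwise estimate from \ref{E} and Theorem \ref{ITqh}\ref{ITqh5} giving $d_{KR}(S\mu,S\nu)\le s\,d_{KR}(\mu,\nu)$; and representability from the $S$-invariance of the weakly closed set of representable measures plus convergence of the iterates. Your explicit check of that invariance and your remark that normalized families force $\sum_i\alpha_i=1$ fill in points the paper passes over.

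Be aware that both arguments also tacitly need $\mathcal{M}$ to be $S$-invariant and $d_{KR}$-closed; closure under countable sub-convex combinations does not give $q_i^*(\mathcal{M})\subseteq\mathcal{M}$. For $\mathbf{DTM}_1(X)$ with $X$ noncompact this really does fail. Take $X=(0,1]$ with the single image transformation $q=u^{-1}$, $u(x)=x/2$, and $\alpha_1=1$. The iterates $S^k\delta_1=\delta_{2^{-k}}$ are $d_{KR}$-Cauchy, but the only fixed point of $S$ is $0$.
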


\begin{proof}
Let  $S$ be a Markov operator associated with the ITS  $(X,  q_i^*, \alpha_i)$.
As in the proof of Theorem \ref{ITcontin}, $S :  \mathcal{M} \rightarrow \mathcal{M}$. 
Let $ \theta_i$ be a (conic) quasi-homomorphism corresponding to $q_i$. Let $f  \in  Lip_1(X,d) \cap C_c(X)$ 
(respectively, $f  \in  Lip_1(X,d) \cap C_c^+(X)$).
By Theorem \ref{sThe}\ref{E},  $\frac{1}{s_i} \theta_i(f)$ is in $  Lip_1(X,d) \cap C_c(X)$ (respectively, $  Lip_1(X,d) \cap C_c^+(X)$).
Applying  Corollary \ref{inftSuDTM} and Theorem \ref{ITqh}\ref{ITqh5} we have:
\begin{align*}
| \int_X f \, d(S(\mu))  & -  \int_X f \, d(S(\nu)) | = |\sum_{i=1}^\infty \alpha_i \int_X f \, d(q_i^* \mu) - \sum_{i=1}^\infty \alpha_i \int_X f \, d(q_i^* \nu) | \\
& = |  \sum_{i=1}^\infty \alpha_i   \int_X \theta_i(f) \, d \mu - \alpha_i  \int_X \theta_i(f) \, d \nu |  
\le s \sum_{i=1}^\infty \alpha_i   | \int_X  \frac {1}{s_i} \theta_i(f) \, d \mu - \int_X   \frac {1}{s_i} \theta_i(f) \, d \nu |   \\
& \le s \sum_{i=1}^\infty \alpha_i d_{KR} (\mu, \nu)  \le  s d_{KR} (\mu, \nu).
\end{align*}

It follows that $d_{KR} (S(\mu), S(\nu)) \le  s d_{KR} (\mu, \nu)$, so $S$ is a contraction.  
By Theorem \ref{completeMS} $(\mathcal{M}, d_{KR})$ is a complete metric space, so there is a unique fixed point $\mu_0 \in \mathcal{M}$. 

Take normalized (deficient) topological measures as  $\mathcal{M}$. The set of representable (deficient) topological measures is compact, 
and it is invariant under $S$. Hence, iterations of a representable (deficient) topological measure
by $S$ will converge to a representable (deficient) topological measure, which is also the unique fixed point  $\mu_0$. 
\end{proof}

Let $P(X)$ denote the set of probability measures on $X$.

\begin{theorem} \label{Barnsley}
Suppose $(X, d)$ is a compact metric space, $u_i :X \longrightarrow X$ are contraction maps for $ i =1,2, \ldots$ with contractivity factors
$s_i$, each $ s_i \le s <1$, $ \alpha_i \ge 0$,  
and $ \sum_{i=1}^\infty \alpha_i \le 1$.  Consider  Markov operator 
$M: P(X) \longrightarrow P(X)$ given by 
$ M(\nu) = \sum_{i=1}^\infty \alpha_i \  \nu \circ u_i^{-1}.$
Then $M$ is a contraction with contactivity factor $ s$ with respect to the KR metric on $P(X)$, i.e.  
$d_{KR}(M(\mu), M(\nu)) \le s d_{KR}(\mu, \nu).$
In particular, there is a unique measure $m_0 \in P(X)$ such that $ M(m_0) = m_0$.
In the case of finitely many contractions $u_1, \ldots, u_n$ and  $ \sum_{i=1}^n \alpha_i \le 1, \   \alpha_i \ge 0$ 
the support of  $m_0$ is the attractor of the IFS $ \{X, u_1, \ldots, u_n\}$. 
\end{theorem}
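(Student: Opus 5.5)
The natural route is to realize $M$ as the Markov operator $S$ of an image transformation system and apply Theorem \ref{FractIT}. For each $i$ let $q_i(A) = u_i^{-1}(A)$ for $A \in \mathscr{O}(X) \cup \mathscr{K}(X)$. By Example \ref{invfunIT} each $q_i$ is an image transformation on $X$, since $X$ is compact and every continuous map is proper. For a measure $\nu \in P(X)$, $q_i^*\nu = \nu \circ u_i^{-1}$, so $M = S$ restricted to $P(X)$. The corresponding quasi-homomorphism is $\theta_i(f) = f \circ u_i$. This follows from Theorem \ref{ITqh}\ref{ITqh1}, because $q_i^*\delta_y = \delta_{u_i(y)}$; alternatively, Theorem \ref{adjCont} gives it directly, since $y \in u_i^{-1}(\{u_i(y)\})$.

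The contraction estimate is then a direct computation. If $f \in Lip_1(X,d)$, then $|f(u_i(x)) - f(u_i(y))| \le d(u_i(x),u_i(y)) \le s_i d(x,y)$, so $\theta_i(f) \in Lip_{s_i}(X,d) \subseteq Lip_s(X,d)$. Compact support is automatic because $X$ is compact. Hence condition \ref{E} of Theorem \ref{sThe} holds with factor $s_i$. I would reproduce the chain of inequalities from the proof of Theorem \ref{FractIT}. Using Corollary \ref{inftSuDTM} together with Theorem \ref{ITqh}\ref{ITqh5}, or ordinary change of variables for measures, we get
\[
\Bigl|\int f\,dM\mu - \int f\,dM\nu\Bigr| \le \sum_i \alpha_i \Bigl|\int f\circ u_i\,d\mu - \int f\circ u_i\,d\nu\Bigr| \le s \sum_i \alpha_i\, d_{KR}(\mu,\nu) \le s\, d_{KR}(\mu,\nu).
\]
For $s_i > 0$ the middle step uses that $\tfrac{1}{s_i}(f \circ u_i) \in Lip_1$. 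If $s_i = 0$, then $f \circ u_i$ is constant and its contribution is zero.

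For the fixed point, the main subtlety is that we want it inside $P(X)$. I would take $\sum \alpha_i = 1$, as the statement implicitly requires, so that $M$ maps $P(X)$ into itself. Indeed $M(\nu)$ is then a countable convex combination of the Borel probability measures $\nu\circ u_i^{-1}$, hence a probability measure, and regularity is automatic on a compact metric space. Next, $P(X)$ is closed in the weak topology, since a weak limit of probability measures on a compact space is a probability measure by the Riesz representation theorem. By Theorem \ref{KRisMetr}, on a compact space the $d_{KR}$ topology coincides with the weak topology. The set $\mathbf{TM}_1(X)$ is compact by Remark \ref{McompT2}, so $P(X)$ is compact and hence complete under $d_{KR}$. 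Alternatively, Theorem \ref{completeMS} gives completeness of $\mathbf{TM}_1(X)$, of which $P(X)$ is a closed subset. The Banach fixed point theorem then yields a unique $m_0 \in P(X)$ with $M(m_0) = m_0$.

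The step I expect to be the main obstacle is the support statement, for finitely many maps with $\alpha_i > 0$. Here I would follow Hutchinson and Barnsley. Let $A$ be the attractor, the unique nonempty compact set with $A = \bigcup_i u_i(A)$. First, $P(A)$, viewed as measures on $X$ supported in $A$, is closed and $M$-invariant: if $\nu(X\setminus A) = 0$, then $u_i^{-1}(X\setminus A) \subseteq X \setminus A$ because $u_i(A) \subseteq A$. Therefore $m_0$ is supported in $A$. Second, let $F = supp\, m_0$. Invariance gives $m_0(u_i(F)) \ge \alpha_i\, m_0(F) > 0$, and a short argument shows $u_i(F) \subseteq F$ for each $i$. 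Consequently $\bigcup_i u_i(F) \subseteq F$, so iterating the Hutchinson operator from $F$ yields a decreasing sequence of compact sets converging to $A$ in the Hausdorff metric, which forces $A \subseteq F$. Combining the two inclusions gives $F = A$. If some $\alpha_i = 0$ are allowed, the claim should be read for the IFS consisting of the maps with $\alpha_i > 0$.
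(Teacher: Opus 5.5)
Your proposal is correct and follows the paper's second route: the paper treats the result as the special case of Theorem \ref{FractIT} with $q_i = u_i^{-1}$ and $P(X)$ in place of $\mathcal{M}$, and it cites Barnsley and Falconer for the support statement, which you instead prove directly via the Hutchinson argument. You are also right that the statement as written needs $\sum_i \alpha_i = 1$ for $M$ to map $P(X)$ into itself (otherwise $M(\nu)(X) = \sum_i \alpha_i < 1$ and no fixed point in $P(X)$ exists), and $\alpha_i > 0$ for all $i$ for the support to equal the attractor of the full IFS $\{X, u_1, \ldots, u_n\}$.
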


\begin{proof}
With the help of Corollary \ref{inftSuDTM} the proof is a minor modification of \cite[Ch. IX, Th. 6.1]{Barnsley} which is done for finitely many contractions.
Alternatively, note that Theorem \ref{Barnsley} is a particular case of Theorem \ref{FractIT} using $q_i = u_i^{-1}$ and $P(X)$ in place of $ \mathcal M$. 
The last statement is well-known result, see, for instance \cite[Ch. IX, Th. 6.2]{Barnsley} or \cite[Th. 2.8]{Falconer}. 
 \end{proof}

\begin{definition}
For a representable deficient topological measure $\mu$ we define $D^\mu   = \bigcap \{ K^{\flat} \subseteq X^{\flat}: \mu(K) = 1, \, K \in \mathscr{K}(X)\}$.
Similarly,  $D^\mu   = \bigcap \{ K^{\sharp} \subseteq X^{\sharp}: \mu(K) = 1, \, K \in \mathscr{K}(X)\}$ for a representable topological measure $\mu$.
\end{definition}

Since $X^\flat$ and $X^\sharp$ are compact, by finite intersection property  $D^\mu$ is a nonempty compact set. 

\begin{remark}
In \cite{Aarnes:ITfirst} the set $D^\mu$  is defined using topological measures on a compact space, and even though it is in $X^\sharp$, it  
is (somewhat confusingly) called the support of $\mu$.
For a compact space $X$ the notion of a support  of a topological measure $ \mu$,  $ supp \,  \mu$,  is defined in \cite{Laberge}.
It is a closed subset of $X$, and it is easy to see that  $(supp \  \mu) ^\sharp = D^\mu$.
\end{remark}

\begin{remark} \label{IFSord}
Suppose  $q_1, \ldots, q_n$ are  (d-) image transformations with contractivity factors $s_i <1$. 
Each $q_i^*$ leaves $X^{\flat}$ (respectively, $X^\sharp$) invariant 
and a simplified argument from Theorem \ref{FractIT} shows it is a contraction with respect to $d_{KR}$ with contractivity $s_i$. 
Then  $\{ X^{\flat}, q_1^*, \ldots, q_n^*\} $ (resp., $\{ X^{\sharp}, q_1^*, \ldots, q_n^*\} $) is an IFS in a  classical sense, see 
for instance, \cite[Ch. III, Def. 7.1]{Barnsley}. 
By \cite[Ch. III, Th. 7.1]{Barnsley}, the map 
$W(F) = \bigcup_{i=1}^n q_i^*(F)$ where $F \in \mathscr{C}  (X^{\flat})$ (resp,  $F \in \mathscr{C}  (X^{\sharp})$) 
is a contraction with contractivity factor $s = \max s_i $, 
and has a unique fixed point $A \subseteq \mathscr{C} (X^{\flat})$ (resp., $A \subseteq \mathscr{C} (X^{\sharp})$),
called the attractor of the given IFS. So $W(A) =  \bigcup_{i=1}^n q_i^*(A)  = A$.
\end{remark}

The next theorem relates an image transformation system on $X$  and the corresponding  Markov operator  
to the IFS on $X^{\sharp}$ and its Markov operator. 

\begin{theorem} \label{IFSonXandX*}
Suppose  $\mathcal{M}$ is a uniformly bounded in variation family of (deficient) topological measures on a locally compact metric space $X$, 
 $q_i$  
is a (d-) image transformation on $X$ with contraction factors $s_i \le s <1$, 
$ \alpha_i \ge 0$ for $i=1, 2, \ldots$, 
and $ \sum_{i=1}^{\infty} \alpha_i \le 1$. 
Let representable deficient topological measure $\mu_0$ be the fixed point given by Theorem \ref{FractIT} for Markov operator
$S$ associated with $ (X,  q_i^*, \alpha_i)$,  and let $D_0 = D^{\mu_0}$.
Let $m_0$ be the probability measure on $X^{\sharp}$ (resp, on $X^\flat$) which is the fixed point given by Theorem \ref{Barnsley} 
for Markov operator $M$ with $q_i^*:X^{\sharp} \rightarrow X^{\sharp}$ (resp., $q_i^*:X^{\flat} \rightarrow X^{\flat}$) as contraction maps.
Then
\begin{enumerate}[label=(\arabic*),ref=(\arabic*)]
\item
$S(m \circ \Lambda) = (M(m)) \circ \Lambda$ for any probability measure $m$ on $X^{\sharp}$ (resp., on $X^\flat$).
\item
$\mu_0 = m_0 \circ \Lambda$.
\item \label{qido}
If   $ \sum_{i=1}^\infty \alpha_i = 1$ then $q_i^*(D_0) \subseteq D_0$ for each $i$.
\end{enumerate}
For finitely many (d-) image transformations $q_1, \ldots, q_n$ and $ \sum_{i=1}^n \alpha_i = 1$,
 let transformation $W$ and its attractor $A$ be as in Remark \ref{IFSord}.  Then
\begin{enumerate}[label=(\roman*),ref=(\roman*)]
\item \label{wdodo}
$W(D_0) \subseteq D_0$.
\item
$A = supp \  m_0   \subseteq D_0$.
\item 
If $\mu_0$ is a simple topological measure, then $A = supp \  m_0 = D_0 =  \{ \mu_0\}$, so $m_0$ is the point mass at $ \mu_0$.
\end{enumerate}
\end{theorem}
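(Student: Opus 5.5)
The plan is to work throughout with the map $\Lambda$ of Example \ref{staIT}. For a probability measure $m$ on $X^\sharp$ the composition $m\circ\Lambda$, i.e. $A\mapsto m(A^\sharp)$, is a (deficient) topological measure on $X$; it is the adjoint $\Lambda^* m$ of the (d-) image transformation $\Lambda$. The central identity, from Theorem \ref{ITqh}\ref{ITqh15}, is
\[
(q_i(E))^\sharp = (q_i^*)^{-1}(E^\sharp)
\]
for compact or open $E$.

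\textbf{Part (1).} For such $E$ I compute
\[
S(m\circ\Lambda)(E)=\sum_i\alpha_i\, m\bigl((q_i(E))^\sharp\bigr)=\sum_i\alpha_i\, m\bigl((q_i^*)^{-1}(E^\sharp)\bigr)=\sum_i \alpha_i\,(m\circ (q_i^*)^{-1})(E^\sharp)=M(m)(\Lambda(E)).
\]
Two (deficient) topological measures that agree on compact sets coincide, so this gives (1).

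\textbf{Part (2).} Apply (1) to $m_0$: $S(m_0\circ\Lambda)=M(m_0)\circ\Lambda=m_0\circ\Lambda$. Hence $m_0\circ\Lambda$ is a fixed point of $S$. Its total mass is $m_0(X^\sharp)=1$, so it lies in the relevant family of normalized measures. By the uniqueness part of Theorem \ref{FractIT}, it equals $\mu_0$.

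\textbf{Part (3).} Let $\nu\in D_0$ and let $K$ be compact with $\mu_0(K)=1$. I need $q_i^*\nu(K)=\nu(q_i(K))=1$, i.e. $\mu_0(q_i(K))=1$ whenever $\mu_0(K)=1$. If that holds, $\nu\in (q_i(K))^\sharp$ and therefore $q_i^*\nu\in K^\sharp$.

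From $1=\mu_0(K)=\sum_j\alpha_j\mu_0(q_j(K))$, each $\mu_0(q_j(K))\le 1$, and $\sum_j\alpha_j=1$, I get $\mu_0(q_j(K))=1$ for every $j$ with $\alpha_j>0$. The indices with $\alpha_j=0$ are the awkward point. I would either assume $\alpha_i>0$, or argue via the support: $\mu_0=m_0\circ\Lambda$, so $\mu_0(K)=1$ iff $m_0(K^\sharp)=1$.

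\textbf{Parts (i)--(iii), finitely many maps.} Part (i) is immediate from (3): $W(D_0)=\bigcup_i q_i^*(D_0)\subseteq D_0$.

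For (ii), the equality $A=\operatorname{supp} m_0$ is Theorem \ref{Barnsley}. To show $\operatorname{supp} m_0\subseteq D_0$, note that for compact $K$ with $\mu_0(K)=1$ we have $m_0(K^\sharp)=1$ and $K^\sharp$ is closed in $X^\sharp$, so $\operatorname{supp} m_0\subseteq K^\sharp$. Intersecting over all such $K$ gives the inclusion.

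For (iii), suppose $\mu_0$ is simple. Any $\nu\in D_0$ has $\nu(K)=1$ whenever $\mu_0(K)=1$. Since both are simple, this forces $\mu_0\le\nu$ on compact sets, hence $\nu=\mu_0$ (both have total mass $1$, and monotonicity on compacts then gives equality). So $D_0=\{\mu_0\}$. Then $A\subseteq D_0$ and $A\ne\emptyset$ give $A=\{\mu_0\}$, and $m_0=\delta_{\mu_0}$.

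\textbf{Main obstacle.} I expect the hardest step to be (3), specifically the handling of indices with $\alpha_j=0$ and checking that the convergent infinite sums behave correctly.
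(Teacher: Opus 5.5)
Your arguments for (1), (2), (3) and (i) are the paper's, and (iii) follows the paper's outline. For the inclusion in (ii) you take a more direct route. The paper gets $A\subseteq D_0$ from (i) together with $A=\lim_n W^n(D_0)$, i.e.\ Hausdorff-metric convergence of the iterates of the closed set $D_0$. You instead use only (2): $m_0(K^{\sharp})=\mu_0(K)=1$ and $K^{\sharp}$ is closed, so $\mathrm{supp}\, m_0\subseteq K^{\sharp}$ for every such $K$. This avoids (3) and the attractor iteration entirely; only the equality $A=\mathrm{supp}\, m_0$ is taken from Theorem \ref{Barnsley}.

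Two points need fixing.

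First, the indices with $\alpha_j=0$. Only your first option (assume all $\alpha_i>0$) works, and it has to be added as a hypothesis. Neither $\mu_0$ nor $m_0$, hence neither $D_0$, depends on such $q_j$, so no argument via the support can help, and the conclusion of (3) genuinely fails for such $j$. Counterexample: on $X=[0,1]$ let $q_i=u_i^{-1}$ with $u_1(x)=x/2$, $u_2(x)=(x+1)/2$, $\alpha_1=1$, $\alpha_2=0$.
\begin{itemize}
\item Then $\mu_0=\delta_0$, and $D_0\subseteq\{0\}^{\sharp}=\{\delta_0\}$ by Remark \ref{tausm}, but $q_2^*\delta_0=\delta_{1/2}\notin D_0$.
\item Moreover $m_0=\delta_{\delta_0}$ while $A=\{\delta_x: x\in[0,1]\}$, so (i)--(iii) fail as well.
\end{itemize}
The paper's step ``$\mu_0(A)=1$ iff $\mu_0(q_i(A))=1$ for all $i$'' silently assumes every $\alpha_i>0$. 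So does the identity $A=\mathrm{supp}\, m_0$ from Barnsley.

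Second, in (iii), ``total mass $1$ plus monotonicity gives equality'' is not a valid reason when $\nu$ is only a simple deficient topological measure. For $\mu$ from Example \ref{basicExDTM} and $x\in D$ we have $\mu\le\delta_x$, both simple of mass $1$, yet $\mu\ne\delta_x$. You must use that $\mu_0$ is a topological measure. If $\mu_0(K)=0$, then $\mu_0(X\setminus K)=1$, so by inner regularity some compact $C\subseteq X\setminus K$ has $\mu_0(C)=1$. Then $\nu(C)=1$, and superadditivity of $\nu$ forces $\nu(K)=0$. This is exactly the paper's step.
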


\begin{proof}
(1) For a probability measure $m$ on $X^{\sharp}$ consider a representable topological measure $\mu =  \Lambda^* m = m \circ  \Lambda$ on $X$.
Let $C \in \mathscr{K}(X)$. 
By Theorem \ref{ITqh}
$(q_i(C))^{\sharp} =(q_i^*)^{-1} (C^{\sharp})$. Then 
\begin{align*}
 S(m \circ \Lambda) (C) &= S(\mu) (C) = \sum_{i=1}^\infty \alpha_i q_i^* \mu (C) = \sum_{i=1}^\infty \alpha_i \mu(q_i (C)) \\
 &= \sum_{i=1}^{\infty} \alpha_i m ((q_i (C))^{\sharp}) = \sum_{i=1}^{\infty} \alpha_i m( (q_i^*)^{-1} (C^{\sharp})) = M(m) (C^{\sharp}) =  
 (M(m)) \circ \Lambda (C).
\end{align*}
Thus, $S(m \circ \Lambda) = (M(m)) \circ \Lambda$. \\
(2) Since $m_0$ is the fixed point for $M$, by the previous part we have $S(m_0 \circ \Lambda) = (M(m_0)) \circ \Lambda = m_0 \circ \Lambda$.
Thus, $m_0 \circ \Lambda$ is the fixed point for $S$, so $m_0 \circ \Lambda = \mu_0$. \\
(3) Suppose  $ \sum_{i=1}^\infty \alpha_i = 1$. For any $A \in \mathscr{O}(X) \cup \mathscr{K}(X)$ we have $ \mu_0(A) = S(\mu_0)(A) = \sum_{i=1}^\infty \alpha_i \mu_0(q_i(A))$,
and so $ \mu_0(A) =1$ iff $\mu_0(q_i(A)) =1 $ for all $i$.  
If $ \nu \in D_0$ then $q_i^* \nu(K) = \nu(q_i(K)) = 1$ for any compact $K$ such that $\mu_0(K) = \mu_0(q_i(K)) =1 $.
Thus, $q_i^* \nu \in D_0$, so $q_i^*(D_0) \subseteq D_0$. \\
(i) Follows from part \ref{qido} and Remark \ref{IFSord}. \\
(ii) The equality  $A = supp \  m_0$ is given by \cite[Ch. IX, Th. 6.1,Th. 6.2]{Barnsley}.
Since $ A = \lim_{n \rightarrow \infty} W^n(C) $ for any nonempty closed $C$ in $X^{\sharp}$ (\cite[Ch. III,  Th. 7.1]{Barnsley}), 
from part \ref{wdodo}  we see that $A \subseteq D_0$. \\
(iii) Let $ \mu_0$ be simple. For $ \nu \in D_0$,   $\mu_0(K) = 1$ implies $\nu(K) = 1$, and  
$\mu_0(K) = 0 $ implies there is $C \subseteq X \setminus K$ with $\mu_0(C) = 1$, so $ \nu(C) = 1$, and $\nu(K) = 0$. Thus, $\mu_0 = \nu$ on compact sets, i.e.
$ \nu =\mu_0$.
Then $D_0 = \{ \mu_0\}$. The $supp \  m_0$, a nonempty closed set, must be $D_0$, so $m_0$ is the point mass at $ \mu_0$.
\end{proof}

\begin{remark}
In general, $A$ is a proper subset of $D_0$ (see \cite[Rem. 5.7]{Aarnes:ITfirst}).
\end{remark}

\begin{remark}
For topological measures and a compact space $X$, 
parts \ref{E} - \ref{A} of Theorem \ref{sThe} first appeared in 
\cite[Pr. 5.2]{Aarnes:ITfirst}  and then in \cite[Pr. 5.8]{Pedersen}, \cite[Pr. 36]{AarnesJohansenRustad}.  
In Theorem \ref{FractIT} we generalize a similar result for compact space $X$, topological measures, and image transformation of the form
$S(\mu) = \sum_{i=1}^n \alpha_i q_i^* (\mu),\alpha_i \ge 0, \sum_{i=1}^n \alpha_i = 1$, see
\cite[Pr. 5.3]{Aarnes:ITfirst},  \cite[Pr. 5.13]{Pedersen}, \cite[Pr. 44]{AarnesJohansenRustad}. 
The first four statements in Theorem \ref{IFSonXandX*} 
for topological measures generalize \cite[Th. 5.5]{Aarnes:ITfirst} proved for a compact space. 
\end{remark}

\section{Signed topological measures and covering dimension} \label{SectDimSTM}

There are several results linking topological measures and dimension theory. First, R. Wheeler in \cite{Wheeler}  showed that 
any topological measure on a normal space $X$ with large inductive dimension $ \text{Ind } X \le 1$ is a measure, 
and if $\dim X \le 1$, then any simple topological measure is a measure.  
This result was later improved by D. Shakmatov in an unpublished note, where $ \text{Ind } X \le 1$ was replaced by a weaker condition  $ \dim X \le 1$.
The theorem that any signed topological measure on a compact space $X$ with $ \dim X \le 1$ is a signed measure 
was first proven by D. Grubb in \cite{Grubb:SignedqmDimTheory}, and later by M. Svistula  in \cite{Svistula:Signed}.  
All these results were obtained by different methods. 
We shall show that on a locally compact space $X$ with $ \dim X \le 1$ 
any signed topological measure of finite norm is a signed Radon measure. 
This result generalizes all previous results involving dimension theory, and 
is very important on its own. We will also need it in the next section.

\begin{definition} \label{STMLC}
A signed topological measure on a locally compact space $X$ is a set function
$\mu: \mathscr{O}(X) \cup \mathscr{C}(X) \rightarrow [-\infty, \infty]$  that assumes at most one of $\infty, 
-\infty$ and satisfies the following conditions:
\begin{enumerate}[label=(STM\arabic*),ref=(STM\arabic*)]
\item \label{STM1} 
if $A,B, A \sqcup B \in \mathscr{K}(X) \cup \mathscr{O}(X) $ then
$\mu(A\sqcup B)=\mu(A)+\mu(B);$
\item \label{STM2}  
$\mu(U)=\lim\{\mu(K):K \in \mathscr{K}(X), \  K \subseteq U\}
$ for $U\in\mathscr{O}(X)$;
\item \label{STM3}
$\mu(F)=\lim\{\mu(U):U \in \mathscr{O}(X), \ F \subseteq U\}$ for  $F \in \mathscr{C}(X)$.
\end{enumerate}
If in \ref{STM1} sets $A$ and $B$ are compact, then $\mu$ is called a signed deficient topological measure. 
\end{definition} 

Note that a signed (deficient) topological measure which is identically 0 on compact sets (or on open precompact sets)  is equal to 0.

\begin{definition} \label{laplu}
Given signed set function $\lambda: \mathscr{K}(X)  \longrightarrow [-\infty, \infty] $ which assumes at most one of $ \infty, -\infty$
we define its total variation $| \lambda|$  
as follows: 
$$|\lambda| (U) = \sup \{ \sum_{i=1}^n |\lambda(K_i)| : \  \bigsqcup_{i=1}^n K_i \subseteq U, \ K_i \subseteq \mathscr{K}(X),  \, n \in \mathbb{N} \} \text{   for an open  } U, $$
$$ |\lambda| (F)  = \inf\{ |\lambda| (U) : \ F \subseteq U, \ U \in \mathscr{O}(X)\} \text{   for a closed  } F. $$
\end{definition}

\begin{remark} \label{totvardtm}
From \cite[Th. 3.8]{Butler:DTMLC} it follows that if $ \mu$ is a signed (deficient) topological measure, then $ | \mu |$ is a deficient topological measure.
From \cite[L. 2.3]{Butler:DTMLC} we have $ |\mu(A)| \le |\mu| (A)$ for a closed or open set $A$. 
\end{remark}

\begin{definition} \label{SDTMnorDe}
We define  $\| \mu \| = \sup \{ | \mu(K)|  :  K \in  \mathscr{K}(X) \} $ for a signed deficient topological measure $\mu$.
\end{definition}

\noindent
For more information about $ | \mu |$ and $ \| \mu \|$ see \cite[Sect. 2]{Butler:STMLC}. We have: $\| \mu \| < \infty$ iff $ | \mu| (X)  < \infty$.
If $\mu$ is a deficient topological measure then $ \| \mu \| = \mu(X)$. 
  
\begin{definition} \label{properSDTM}
A signed deficient topological measure $\mu$  is called proper if  deficient topological measure  $ | \mu |$ is proper, i.e.
from $m \le |\mu| $, where $m$ is a Radon measure, it follows that $m = 0$.
\end{definition}

\begin{definition} \label{dimX}
For a topological space $X$ its covering dimension $\dim X \le 1$ if for every finite open cover $\{ U_1, \ldots, U_n\}$ of $X$ there is an open cover
$\{ V_1, \ldots, V_n\}$ of $X$ such that $ V_i \subseteq U_i$ for $ i=1, \ldots, n$ and $V_i \cap V_j \cap V_k = \emptyset$ for any triple of distinct $i, j, k$.
\end{definition}    

\begin{theorem} \label{dimSTM}
Let $X$ be a locally compact space with the covering dimension $\dim X \le 1$. 
Suppose $ \mu$ is a proper signed topological measures on $X$, $ | \mu |(X) < \infty$. Then $ \mu = 0$. 
\end{theorem}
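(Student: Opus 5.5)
The plan is to reduce to compact sets and show that $\mu$ is subadditive there. Since $|\mu|$ is then dominated by a Radon measure, properness will force $\mu = 0$.

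\emph{Step 1: a dimension-theoretic splitting.} Fix a compact $K$ and an open $U \supseteq K$. Compact subspaces of a space with $\dim X \le 1$ again have dimension $\le 1$. Using $\dim \le 1$ on finite covers of $K$ by small precompact open sets, we can arrange that each point of $K$ lies in at most two members of the refined cover. From this we build, for any compacts $C_1, C_2$ with $C_1 \cup C_2 = K$ and open $U_1 \supseteq C_1$, $U_2 \supseteq C_2$, a decomposition
\[
K = A_1 \cup A_2, \qquad A_1 \subseteq U_1, \quad A_2 \subseteq U_2,
\]
with $A_i$ compact and $A_1 \cap A_2 = D$. Here $D$ is a compact set that can be taken ``thin'', i.e.\ contained in an open set $W$ of arbitrarily small $|\mu|$-measure. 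This is the step I expect to be the main obstacle. I would try first to use $\dim \le 1$ in its ``partition'' form: every compact $D$ in such a space admits separations whose boundaries are zero-dimensional. On a zero-dimensional compact set, $|\mu|$ restricted near it behaves additively, since clopen partitions give additivity directly from \ref{STM1}. Combined with the fact that the deficient topological measure $|\mu|$ (Remark \ref{totvardtm}) dominates no nonzero Radon measure, this should make the mass near zero-dimensional sets negligible.

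\emph{Step 2: additivity modulo the thin set.} Using \ref{STM1} on the disjoint pieces $A_1 \setminus W'$, $A_2 \setminus W'$ and $D$ (with small open $W'$), together with regularity \ref{STM2}--\ref{STM3} and the estimate $|\mu(A)| \le |\mu|(A)$, we derive
\[
|\mu(K) - \mu(A_1) - \mu(A_2)| \le \varepsilon .
\]
This yields the inequality $|\mu|(C_1 \cup C_2) \le |\mu|(C_1) + |\mu|(C_2)$ for compact sets. By Theorem \ref{subaddit}(a) applied to the finite deficient topological measure $|\mu|$, we conclude that $|\mu|$ extends to a measure, and by finiteness to a Radon measure $m$ with $m = |\mu|$ on $\mathscr{O}(X) \cup \mathscr{K}(X)$.

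\emph{Step 3: conclusion.} Now $m \le |\mu|$ and $m$ is Radon, so properness (Definition \ref{properSDTM}) gives $m = 0$. Hence $|\mu| = 0$, and by Remark \ref{totvardtm} $|\mu(A)| \le |\mu|(A) = 0$ for every compact $A$. Therefore $\mu$ vanishes on compact sets, and so $\mu = 0$.

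If Step 1 proves too delicate in this direct form, the fallback is to first pass to the one-point compactification. By \ref{STM2} the total variation is tight, so we can work on large compacts $K$ with $|\mu|(X \setminus K) < \varepsilon$. On such a compact set we can cite the compact-case result of Grubb/Svistula that a signed topological measure on a compact space of dimension $\le 1$ is a signed measure. This would be applied to the restriction of $\mu$ to $K$, which must first be checked to be a signed topological measure up to $\varepsilon$-errors. The proof would then finish as in Step 3.
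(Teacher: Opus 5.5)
\textbf{There is a genuine gap in Steps 1 and 2, which carry the whole argument.} Your Step 3 is fine: if $|\mu|$ were Radon, properness would kill it.

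\emph{Step 1.} The thin overlap $D$ is never constructed, and the route you suggest is not available under $\dim X\le 1$. You want any two disjoint closed subsets of $K$ (here $K\setminus U_1$ and $K\setminus U_2$) to be separated by a partition $D$ with $\dim D\le 0$. That is exactly the condition $\mathrm{Ind}\,K\le 1$, i.e.\ Wheeler's hypothesis. For non-metrizable compacta $\dim\le 1$ does not imply it: there are compact Hausdorff spaces with $\dim =1$ and $\mathrm{Ind}=2$. Your remark that properness forces $|\mu|(D)=0$ for a zero-dimensional compact $D$ is correct, but such a $D$ need not exist.

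\emph{Step 2.} This fails even if Step 1 is granted. The pieces $A_1\setminus W'$, $A_2\setminus W'$, $D$ do not have union $K$. More seriously, approximate additivity cannot follow from \ref{STM1}--\ref{STM3} plus a small-mass overlap. Take the Aarnes circle topological measure of Example \ref{Aatm}; it is proper, since every point has a solid neighborhood of measure $0$. Let $K=B$, let $A_1,A_2$ be closed arcs with $A_1\cup A_2=B$ and $A_1\cap A_2=\{a,b\}$, and let $W$ be the union of two small open half-disks at $a$ and $b$. Then $|\mu|(W)=\mu(W)=0$, yet $\mu(K)=1$ and $\mu(A_1)=\mu(A_2)=0$. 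So any valid Step 2 must use $\dim X\le 1$ again, not just thinness of $D$.

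\emph{Fallback.} It has the same flaw. Restricting a topological measure to a compact set does not give a topological measure in general. In the example above, on $B$ you would get $\mu(A_1)+\mu(A_2)=0<1=\mu(B)$, which is impossible for a topological measure on the circle. Proving that the restriction is one under $\dim\le 1$ is essentially the theorem itself.

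\textbf{The missing idea} is how properness is turned into smallness of \emph{every} piece of an \emph{exact} decomposition. The paper uses \cite[Th.~4.4]{Butler:Decomp}: since $|\mu|$ is proper, for every $\epsilon>0$ there is a finite open cover $\{U_i\}$ of $X$ with $\sum_i|\mu|(U_i)<\epsilon$. The argument then runs as follows.
\begin{enumerate}
\item For a precompact open $W$, take a compact $C\subseteq W$ with $|\mu(A)|<\epsilon$ for every compact or open $A\subseteq W\setminus C$ (\cite[L.~25]{Butler:STMLC}).
\item Using $\dim X\le 1$, refine $\{U_i\cap W,\ U_i\cap(X\setminus C)\}$ to an open cover $\{V_i\}$ with no triple intersections. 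Then $\sum_i|\mu|(V_i)<2\epsilon$.
\item $X$ is the disjoint union of the closed sets $C_i=X\setminus\bigcup_{j\ne i}V_j$ and the open sets $V_k\cap V_l$. Both $\sum_i|\mu|(C_i)$ and $\sum_{k\ne l}|\mu|(V_k\cap V_l)$ are bounded by $\sum_i|\mu|(V_i)<2\epsilon$.
\item Removing the $C_i$ not contained in $W$ gives an open $U$ with $C\subseteq U\subseteq W$. This $U$ is exactly a disjoint union of compact $C_i$'s and open sets $V_k\cap V_l\cap W$, so additivity applies with no error. Hence $|\mu(U)|<4\epsilon$ and $|\mu(W)|<6\epsilon$.
\end{enumerate}
No subadditivity of $|\mu|$ is needed. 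Without this small-cover lemma, your outline has no mechanism to make the overlap, let alone the pieces, negligible.
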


\begin{proof}
Suppose $\mu$ is a finite proper signed topological measure on $X$,  $\dim X \le 1$, $ | \mu |(X) < \infty$. 
We shall show that $ \mu = 0$, following a technique from \cite[Pr. 12]{Svistula:Signed}.
Since $ | \mu| $ is proper, by \cite[Th. 4.4]{Butler:Decomp}  for $\epsilon >0$ choose an open cover  $\{U_1, \ldots, U_n\}$ of $X$ such that
$ \sum_{i=1}^n |\mu | (U_i) < \epsilon$.
Let $W$ be an arbitrary precompact open subset of $X$. By \cite[L. 25]{Butler:STMLC} there is a compact
$C \subseteq W$ such that $ |\mu(A)| < \epsilon$ for any compact or open $A \subseteq W \setminus C$. Let $ E = X \setminus C$. 
For an open cover $\{ U_i \cap W, U_i \cap E, i =1, \ldots, n\}$ of $X$, choose a finite open cover $\{ V_i, i \in I \}$  according to assumption
$ \dim X \le 1$. So each $V_i$ is contained in at least one of $U_i \cap W \subseteq W$ or $U_i \cap E \subseteq E$, and 
$$ \sum_{i \in I} | \mu | (V_i) \le \sum_{i =1}^n |\mu| (U_i \cap U) +  \sum_{i =1}^n |\mu| (U_i \cap E)   \le  2  \sum_{i =1}^n |\mu| (U_i) < 2  \epsilon.$$
Consider sets $ C_i =  V_i \setminus \bigcup_{j \in I, j \ne i} V_j = X \setminus \bigcup_{j \in I, j \ne i} V_j$ and  $V_{kl} =V_k \cap V_l$, where $i, j, k,l \in I$.
Since $\dim X \le1$, these sets are pairwise disjoint; their disjoint union is $X$. Note also that
$$ \sum_{ k \neq l} | \mu| (V_{kl}) \le \sum_{i \in I}  \sum_{j \neq i} |\mu| ( V_{ij}) \le  \sum_{i \in I} |\mu| (V_i).$$ 
Let $I_1 = \{ i \in I: C_i \not\subseteq W \}$. 
For $ i \in I_1$, $V_ i \not\subseteq W$, so $V_ i \subseteq E$, so $ C_ i \subseteq E = X \setminus C$.  
Set $U = W \setminus \bigsqcup_{i \in I_1} C_i$. Then $ C \subseteq U \subseteq W$. 
Since $U \setminus C \subseteq W \setminus C$,  we have
$| \mu(U\setminus C )|  = | \mu(U) - \mu(C)| < \epsilon$, and $| \mu(W) - \mu(C)| < \epsilon$, so $ |\mu(U) - \mu(W) | < 2 \epsilon$.  
Let $I_2 = I \setminus I_1$. For $ i \in I_2$ the set $C_i \subseteq U \subseteq W$, so $C_i$ is compact.
We have $U = \bigsqcup_{i \in I_2} C_i \sqcup \bigsqcup_{k \neq l} (V_{kl} \cap W)$, 
so $ \mu(U) = \sum_{i \in I_2} \mu(C_i) + \sum_{k \neq l} \mu(V_{kl} \cap U)$. 
Now:
\begin{align*}
|\mu(W)| - 2 \epsilon & \le |\mu(U) |  \le \sum_{i \in I_2} |\mu(C_i)| + \sum_{k \neq l} | \mu(V_{kl} \cap W) | \le 
 \sum_{i \in I_2} |\mu| (C_i) + \sum_{k \neq l} | \mu | (V_{kl} \cap W)   \\
& \le  \sum_{i \in I_2} |\mu| (C_i) + \sum_{k \neq l} | \mu | (V_{kl})   \le 2 \sum_{i \in I} |\mu|(V_i) < 4 \epsilon,
\end{align*}
and $ |\mu(W)| < 6 \epsilon$. It follows that $ \mu(W) = 0$ for any precompact open set $W$. Thus, $ \mu = 0$. 
\end{proof}

\begin{theorem} \label{dimTh}
Let $X$ be a locally compact space with the covering dimension $\dim X \le 1$. 
Suppose $ \nu$ is a (signed) topological measures on $X$, $ | \nu |(X) < \infty$. Then $ \nu$ is a (signed) Radon measure.
\end{theorem}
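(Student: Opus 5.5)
The plan is to reduce the statement to Theorem \ref{dimSTM} through a decomposition of $\nu$ into a Radon part and a proper part. First take $\nu$ a topological measure with $\nu(X) = |\nu|(X) < \infty$. The decomposition results of \cite{Butler:Decomp} (the same paper whose Th. 4.4 is used in the proof of Theorem \ref{dimSTM}) say that a finite (deficient) topological measure splits as $\nu = m + \lambda$. Here $m$ is the largest Radon measure with $m \le \nu$, and $\lambda$ is a proper topological measure. For a signed topological measure $\nu$ with $|\nu|(X) < \infty$ I would use the analogous decomposition for signed topological measures, $\nu = m + \lambda$, from \cite{Butler:STMLC} and \cite{Butler:Decomp}. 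In that version $m$ is a signed Radon measure and $\lambda$ is a proper signed topological measure, meaning $|\lambda|$ is proper in the sense of Definition \ref{properSDTM}, with $|\lambda|(X) \le |\nu|(X) + |m|(X) < \infty$.

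Next I check that $\lambda = \nu - m$ is again a signed topological measure. A signed Radon measure restricted to $\mathscr{O}(X)\cup\mathscr{C}(X)$ satisfies \ref{STM1}--\ref{STM3}, because finite Radon measures are inner compact regular on open sets and outer regular on closed sets. Differences of set functions satisfying \ref{STM1}--\ref{STM3} with finite values again satisfy these axioms, since the limits in \ref{STM2} and \ref{STM3} are taken over the same directed families and so commute with subtraction. Hence $\lambda$ is a proper signed topological measure on $X$ with $|\lambda|(X)<\infty$. Theorem \ref{dimSTM} then gives $\lambda = 0$, so $\nu = m$ is a (signed) Radon measure.

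The main obstacle is making sure the decomposition is available in exactly the needed form: the Radon part must be a genuine signed Radon measure, and the remainder must be a signed \emph{topological} measure, not merely a deficient one, and proper. If the signed decomposition is not directly available, I would use a Jordan-type splitting instead. Since $|\nu|$ is a finite deficient topological measure, set $\nu^{\pm} = \frac12(|\nu| \pm \nu)$ and decompose each. However, this splitting may leave the class of topological measures, which is why the signed decomposition of \cite{Butler:Decomp} is the preferred route. In the unsigned case there is no difficulty, since $\nu - m$ for the maximal Radon minorant $m$ is a proper topological measure.
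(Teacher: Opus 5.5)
Your proposal is correct and follows the paper's proof. The paper writes $\nu = m + \mu$ with $m$ a signed Radon measure and $\mu$ a proper signed topological measure via \cite[Th. 5.3]{Butler:Decomp}, and then concludes $\mu = 0$ by Theorem \ref{dimSTM}. Your extra checks that $\nu - m$ satisfies (STM1)--(STM3) with finite total variation are harmless but already supplied by that decomposition theorem, so the Jordan-type fallback is unnecessary.
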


\begin{proof}
$\| \nu \| < \infty$, and by \cite[Th. 5.3]{Butler:Decomp} we may write $ \nu = m + \mu$, where $m$ is a signed Radon measure, 
$ \mu$ is a proper signed topological measure. By  Theorem \ref{dimSTM} $ \mu = 0$. 
\end{proof}

\begin{remark}
Theorem \ref{dimTh} is not true for signed deficient topological measures. On $X = \mathbb{R}$ (so $\dim X \le 1$) 
there are finite deficient topological measures 
that are not measures, see \cite[Sect. 6]{Butler:DTMLC}.
\end{remark}  

\section{Sample Median} \label{SectSampleMed}

The idea to define a median in multidimensional spaces started well over 100 years ago. In \cite{Hayford} J. Hayford, regarding the problem of 
finding the center of the US population, considered a couple of ways to define the median in two-dimensional spaces 
and discussed their pluses and minuses in the light of some natural equivariance properties. 
Since then there were many proposed definitions of a higher 
dimensional medians, see, for instance, \cite{Small} for a survey of various concepts. 
One desirable feature of a multidimensional median is the equivariance with respect to some natural transformations, for example, 
projections onto coordinate axes, monotone maps, etc.   
Following ideas of  A. Rustad in \cite{AlfMedian},  we define a generalized distribution of the sample median (g.d.s.m.) 
for finitely many continuous proper maps $T_i : Y \longrightarrow X$. 
Here  $X$ and $Y$ are locally compact, and we do not require the spaces to be metric.
We then show that the g.d.s.m. and the inverse on the sample median are equivariant under a wide collection of maps, so called solid variables, 
which include projections, monotone maps, homeomorphisms, etc.     
To prove the results of this section we again use image transformations and their adjoints. 

We denote  by $|S|$ the cardinality of a finite set $S$, and by $I_n$ the set $ \{1, \dots, n\}$. 

\begin{definition}  \label{SampleMed}
Let $X$ and $Y$ be locally compact spaces, and $P$ be a deficient topological measure (in particular, a probability measure) on $Y$. 
If $T_i : Y \longrightarrow X$ for $i=1, \ldots, 2n-1$ 
are continuous proper maps, we define the generalized distribution of the sample median (g.d.s.m.) of the maps $ \{T_i\}$ to be a set function 
$ \mu_{\{T_i\}}: \mathscr{A}_{s}^{*}(X)  \longrightarrow \mathbb{R}$ with
$ \mu_{\{T_i\}}(A) = P(\{ y:  | \{i: T_i(y) \in A \}| \ge n\})$. 
\end{definition}

When $P$ is a probability measure on $Y$,  
$ \mu_{\{T_i\}}(A)$  is the probability of over half of the variables $T_i$ being in $A$. 

\begin{theorem} \label{SMedExt}
Suppose $X$ is a q-space or a noncompact q-space.
Suppose $Y$ is locally compact,  and $T_i : Y \longrightarrow X$ for $i=1, \ldots, 2n-1$ 
are continuous proper maps. Then the map $q$ defined on  $\mathscr{A}_{s}^{*}(X)$ by 
$$ q(A) = q_{\{T_i\}}(A) = \bigcup_{ \{S \in I_{2n-1}: |S| \ge n \} } \left[ \bigcap_{i \in S} T_i^{-1}(A) \right] $$
extends to an image transformation from $X$ to $Y$.
If $P$ is a deficient topological measure on $Y$,
then the  g.d.s.m. of the maps $ \{T_i\}$ 
extends to a topological measure $ \mu_{\{T_i\}} = q^*P = P \circ q$  on $X$.
\end{theorem}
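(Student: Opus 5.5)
The plan is to verify the four conditions of Theorem \ref{ITsolid} for $q$ on $\mathscr{A}_{s}^{*}(X)$, using Remark \ref{easysolpar} to cut down the work. Then $q$ extends uniquely to an image transformation, and $q^*P = P\circ q$ is a (deficient) topological measure on $X$. On solid sets it agrees with the g.d.s.m., because $y \in q(A)$ iff $|\{i: T_i(y)\in A\}| \ge n$. So $\mu_{\{T_i\}}$ extends to $q^*P$. Since $q$ is an image transformation (not merely a d-image transformation), $q^*$ maps $\mathbf{TM}(Y)$ into $\mathbf{TM}(X)$. For the final claim I will verify that the extension has the full additivity of \ref{TM1}. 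Pointwise, $y \in q(A)$ is equivalent to $\sum_i 1_A(T_i(y)) \ge n$, and I will work with this form throughout.

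\textbf{Topological type of $q(A)$.} If $U$ is open, each $T_i^{-1}(U)$ is open, and finite intersections and unions of open sets are open, so $q(U)$ is open. If $K$ is compact, properness makes each $T_i^{-1}(K)$ compact; the intersections over $S$ are then compact, and the finite union is compact.

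\textbf{Condition \ref{ITsol1}.} Suppose $\bigsqcup_{k=1}^m C_k \subseteq C$. Because the $C_k$ are disjoint, each $T_i(y)$ lies in at most one $C_k$. Two distinct $k$ cannot both have at least $n$ indices, since that would need $2n > 2n-1$ indices. Hence the sets $q(C_k)$ are disjoint. Monotonicity of $q$ gives $q(C_k)\subseteq q(C)$.

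\textbf{Regularity, conditions \ref{ITsol2} and \ref{ITsol3}.} First suppose $y \in q(U)$, witnessed by $x_i = T_i(y) \in U$ for $i$ in some $S$ with $|S|\ge n$. The finite set $\{x_i\}_{i\in S}$ lies in $U$. I need a compact solid $K \subseteq U$ containing these points. Since $U$ is a connected open set in a locally connected LC space, I would take finitely many connected compact neighborhoods inside $U$ and join the points by a connected compact subset of $U$. I then take its solid hull, a standard construction in \cite{Butler:TMLCconstr}. The solid hull of a compact connected set inside a solid open set $U$ stays inside $U$, because the complementary components of $U$ are not precompact. Then $y\in q(K)$.

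For \ref{ITsol3}, by Remark \ref{easysolpar} it suffices in the compact case to have \ref{ITsol2}. In the noncompact case I argue directly. Suppose $y \notin q(K)$. Then fewer than $n$ of the points $T_i(y)$ lie in $K$. The remaining points form a finite set outside $K$, and I choose a solid open $U\supseteq K$ avoiding them. Such a $U$ exists because the open solid sets containing $K$ intersect to $K$, again a known fact from \cite{Butler:TMLCconstr}. Then $y\notin q(U)$.

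\textbf{Condition \ref{Qsolidparti}.} For a noncompact q-space this holds automatically by Remark \ref{easysolpar}. For a q-space, the remark reduces it to $q(X) = q(A)\sqcup q(X\setminus A)$ with $A$ solid. Here $q(X)=Y$, and for each $y$ the $2n-1$ indices split between $A$ and $X\setminus A$. By pigeonhole, exactly one side gets at least $n$ of them, which gives the disjoint union.

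\textbf{Main obstacle.} The main obstacle is the regularity step \ref{ITsol2}: producing a compact \emph{solid} subset of a solid open set that contains a prescribed finite set. This is where connectedness and local connectedness of $X$ and the solid-hull machinery must be invoked carefully. Everything else is counting and properness.
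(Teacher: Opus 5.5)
Your verification of the hypotheses of Theorem~\ref{ITsolid} follows the paper's proof. Properness gives compactness of $q(K)$. The count $2n>2n-1$ gives \ref{ITsol1} and, for a q-space, $q(X)=q(A)\sqcup q(X\setminus A)$. Conditions \ref{ITsol2} and \ref{ITsol3} come from approximation by solid sets.

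The real problem is the last step. Your promised check of \ref{TM1} for $q^*P$ cannot succeed when $P$ is merely a deficient topological measure, because the final assertion is then false. Take $Y=X=[0,1]^2$ and all $T_i=\mathrm{id}$, so $q(A)=A$. Let $P$ be the deficient topological measure of Example~\ref{basicExDTM} with $D=[\tfrac14,\tfrac34]\times\{\tfrac12\}$. Then the g.d.s.m.\ is $A\mapsto P(A)$ on solid sets. Consider the closed solid set $A=\{(s,t)\in X: s\le \tfrac12\}$ and its open solid complement. We get $P(A)=P(X\setminus A)=0$ while $P(X)=1$. So no topological measure agrees with the g.d.s.m.\ on solid sets.

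The assertion does hold for $P\in\mathbf{TM}(Y)$, e.g.\ a probability measure. There your own remark that $q^*$ maps $\mathbf{TM}(Y)$ into $\mathbf{TM}(X)$ already proves it, and nothing further about \ref{TM1} needs checking. For deficient $P$ you only get that the deficient topological measure $q^*P$ extends the g.d.s.m. The paper's one-line justification (``$q^*P$ is a topological measure'') likewise covers only topological $P$.

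There are two smaller points. First, in \ref{ITsol3}, let $F=\{T_i(y): T_i(y)\notin K\}$. Knowing that the open solid sets containing $K$ intersect to $K$ lets you avoid one point of $F$ at a time, not all of $F$ at once. An intersection of solid sets need not be solid, so you cannot simply intersect. What you need is a single precompact open solid $U$ with $K\subseteq U\subseteq X\setminus F$. This comes from the solid-approximation result the paper invokes at this point (\cite[Th.~3.10(2)]{Butler:TMLCconstr}): every open $W\supseteq K$ contains a precompact open solid set containing $K$.

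Second, in \ref{ITsol2} for a compact q-space, your argument via non-precompact complementary components is vacuous, since every set is precompact. Instead, let $L\subseteq U$ be your compact connected set and let $E_0$ be the component of $X\setminus L$ containing the connected set $X\setminus U$. Take $K=X\setminus E_0$, or $K=X$ if $U=X$.
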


\begin{proof} 
Suppose $X$ is a noncompact  q-space.
Using Theorem \ref{ITsolid} and Remark \ref{easysolpar}
we shall show that $q$ in the statement of the theorem extends to an image transformation from $X$ to $Y$. 

Note that $q$ is monotone,  $q(K)$ is compact for $K \in \mathscr{K}_{s}(X)$, and  $q(U)$ is open for $U \in  \mathscr{O}_{s}^*(X)$. 
Suppose  $\bigsqcup_{j=1}^k C_j \subseteq C,  \  C, C_j \in \mathscr{K}_{s}(X)$. Let $y \in q(C_j)$ for some $j$.
If  $  T_i(y) \in C_j$ for all $i$ in some $S$, $ |S| \ge n$, then $| \{ i:  T_i (y) \in C_l \}| < n$ for any set $C_l, l \neq j$. 
Thus, the sets $q(C_1), \ldots, q(C_k)$  are disjoint, 
and we obtainTheorem \ref{ITsolid}\ref{ITsol1}.  
To show Theorem \ref{ITsolid}\ref{ITsol2}, let $ y \in q(U)$, $U \in \mathscr{O}_{s}^{*}(X)$, so
$ D = \bigcup_{i \in S} T_i(y) \subseteq U$ for some $S$ with  $|S|  \ge n$. 
By \cite[Th. 3.10 (1)]{Butler:TMLCconstr} pick a compact solid set $K$ such that $ D \subseteq K \subseteq U$, 
and observe that $y \subseteq q(K) \subseteq q(U)$.
Theorem \ref{ITsolid}\ref{ITsol3} follows similarly from \cite[Th. 3.10 (2)]{Butler:TMLCconstr}, and
Theorem \ref{ITsolid}\ref{Qsolidparti} holds automatically by Remark \ref{easysolpar}. 

If $X$ is a q-space, the argument is similar, noting that given a solid partition $X = A \sqcup (X \setminus A)$, 
for any $y \in q(X)$ over half of all $ T_i(y)$ is contained in either $A$ or $X \setminus A$, so $q(X) = q(A) \sqcup q(X \setminus A)$. 
By Remark \ref{easysolpar}
this gives Theorem \ref{ITsolid}\ref{Qsolidparti}. 

The last assertion follows from  \cite[Th. 10.7]{Butler:TMLCconstr}, 
since  $q^* P = P \circ q$  is a topological measure, and it coincides with  $ \mu_{\{T_i\}}$ on $\mathscr{A}_{s}^{*}(X)$. 
\end{proof}

\begin{definition} 
A function $f :X_1 \longrightarrow X_2$ is called a solid variable if it is continuous and for a solid (open or closed) set $A \subseteq X_2$,  
$f^{-1} (A)$ is a solid set in $X_1$.
\end{definition}

\begin{remark}
If $f :X_1 \longrightarrow X_2$ is a solid variable which is also proper, then $f^{-1} (\mathscr{K}_{s}(X_2))  \subseteq \mathscr{K}_{s}(X_1)$.
A composition of solid variables is a solid variable. 
Solid variables on compact spaces are studied in \cite{AarnesJohansenRustad}, \cite{OrjanAlf:HomSimpleTM},   \cite{AlfImTrans}, \cite{AlfMedian}.
\end{remark} 

\begin{example} \label{solidVar}
A homeomorphism $h: X \longrightarrow Y$ is a solid variable, here $X,Y$ are compact or LC noncompact. 
A  continuous onto map $ h: X \longrightarrow Y$, $X, Y$ are compact, for which inverse images of points are connected, is a solid variable. 
For  $X = Y = \mathbb{R}$, monotone maps are solid variables. 
Projections of the closed ball in $\mathbb{R}^n$  centered at the origin onto lines in $\mathbb{R}^n$ (in particular, onto coordinate axis)  are solid variables. 
Examples of proper solid variables when $X = Y = \mathbb{R}^n, \ n \ge 2$ include rotations, 
the symmetry with respect to  a line, the symmetry with respect to a point,  translations,  
a function $f_t$ given by $f_t(0) = 0, f_t(x) = tx$ for $x \neq 0$, where $ t \in \mathbb{R} \setminus \{0\}$.  
\end{example}
 
\begin{theorem} \label{ITfinva}
Suppose $X_i$ ($i=1,2$) is a q-space or a noncompact q-space.
Suppose $Y$ is locally compact,  $T_i: Y \longrightarrow X_1$ for $ i =1, \ldots, 2n-1$ are proper continuous maps, 
and $f: X_1  \longrightarrow X_2$ is a proper solid variable. 
Let $q_{\{T_i\}}$ be the image transformation given by Theorem \ref{SMedExt}.
Then  $q_{\{T_i\}}  \circ f^{-1}$ is an image transformation, and
$q_{\{T_i\}}  \circ f^{-1} = q_{\{f \circ T_i\}}$,  hence for their adjoints
$ (f^{-1}) ^* \circ q_{\{T_i\}} ^* = q_{\{f \circ T_i\}}^*. $ 
\end{theorem}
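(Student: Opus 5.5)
The plan is to check the identity $q_{\{T_i\}}\circ f^{-1} = q_{\{f\circ T_i\}}$ directly on solid sets, and then let the uniqueness part of Theorem \ref{ITsolid} carry it to all of $\mathscr{O}(X_2)\cup\mathscr{K}(X_2)$. Composition properties then give the adjoint statement.

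First, $f^{-1}$ is an image transformation from $X_2$ to $X_1$. Since $f$ is continuous and proper, this is Example \ref{invfunIT}. The composition of two image transformations is again an image transformation. Properties \ref{IT1} and \ref{IT2} are immediate because $f^{-1}$ preserves disjoint unions and sends open/compact sets to open/compact sets. Properties \ref{IT3} and \ref{IT4} follow from Lemma \ref{ITsvva} \ref{ITsv2} and \ref{ITsv5}. For \ref{IT3}, if $K\subseteq q(f^{-1}(U))$ is compact, we choose a compact $C\subseteq f^{-1}(U)$ with $K\subseteq q(C)$. Then $f(C)$ is a compact subset of $U$ and $C\subseteq f^{-1}(f(C))$, so monotonicity gives the claim. \ref{IT4} is handled dually, using that for compact $K$ and open $W\supseteq f^{-1}(K)$ properness yields an open $V\supseteq K$ with $f^{-1}(V)\subseteq W$. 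So $q_{\{T_i\}}\circ f^{-1}$ is an image transformation from $X_2$ to $Y$.

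Next, let $A\in\mathscr{A}_s^*(X_2)$. Since $f$ is a proper solid variable, $f^{-1}(A)$ is a solid set in $X_1$. It is compact if $A$ is compact. If $A$ is open and precompact, then $f^{-1}(A)\subseteq f^{-1}(\overline A)$, which is compact, so $f^{-1}(A)\in\mathscr{O}_s^*(X_1)$. Hence $q_{\{T_i\}}(f^{-1}(A))$ is given by the explicit formula of Theorem \ref{SMedExt}:
\begin{align*}
q_{\{T_i\}}(f^{-1}(A)) = \bigcup_{|S|\ge n}\bigcap_{i\in S} T_i^{-1}(f^{-1}(A)) = \bigcup_{|S|\ge n}\bigcap_{i\in S}(f\circ T_i)^{-1}(A) = q_{\{f\circ T_i\}}(A).
\end{align*}
The maps $f\circ T_i$ are continuous and proper, so Theorem \ref{SMedExt} applies to them on $X_2$, and $q_{\{f\circ T_i\}}$ is an image transformation from $X_2$ to $Y$. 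The two image transformations $q_{\{T_i\}}\circ f^{-1}$ and $q_{\{f\circ T_i\}}$ thus agree on $\mathscr{A}_s^*(X_2)$. Their restrictions to solid sets satisfy the hypotheses of Theorem \ref{ITsolid}, since they are restrictions of image transformations. By the uniqueness of the extension there, they coincide on $\mathscr{O}(X_2)\cup\mathscr{K}(X_2)$. Finally, $(p\circ q)^*=q^*\circ p^*$ gives $(f^{-1})^*\circ q_{\{T_i\}}^* = q_{\{f\circ T_i\}}^*$.

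The main point needing care is the uniqueness step. Agreement on solid sets determines an image transformation only through the extension procedure of Theorem \ref{ITsolid}. To make this airtight, I would argue directly. Every compact set in $X_2$ has an open neighbourhood base made of finite disjoint unions of precompact open solid sets (components of neighbourhoods, via \cite[Th. 3.10]{Butler:TMLCconstr}). Every open set is exhausted by compact sets, and conditions \ref{IT2}--\ref{IT4} then force the values of an image transformation on all open and compact sets from its values on solid sets. A secondary point is checking that the composition satisfies \ref{IT4}, which is where properness of $f$ is used.
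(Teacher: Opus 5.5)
Your proposal is correct and follows the paper's own proof. The paper likewise gets $f^{-1}$ as an image transformation from Example \ref{invfunIT}, verifies $q_{\{T_i\}}(f^{-1}(K)) = q_{\{f\circ T_i\}}(K)$ on compact solid sets $K$ via the counting formula, and concludes that the two image transformations are equal; your checks of the composition axioms, of precompact open solid sets, and of the uniqueness step just make explicit what the paper leaves implicit.

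One part should go: your closing ``direct'' uniqueness sketch. Compact sets need not have neighbourhood bases made of finite disjoint unions of precompact open solid sets; a circle in $\mathbb{R}^2$ sitting inside a thin annulus has no such base. You do not need that sketch anyway. By construction, $q_{\{f\circ T_i\}}$ is the unique extension in Theorem \ref{ITsolid} of its restriction to $\mathscr{A}_s^*(X_2)$, and $q_{\{T_i\}}\circ f^{-1}$ is another image transformation extending that same restriction.
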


\begin{proof}
By Example \ref{invfunIT}, $f^{-1}$ is an image transformation from $X_2$ to $X_1$, so  $q_{\{T_i\}}  \circ f^{-1}$ is an image transformation.
Let $K$ be a compact solid set in $X_2$. Then
\begin{align*}
 y \in q_{\{T_i\}}  (f^{-1} (K))  & \Longleftrightarrow | \{ i: T_i(y) \in f^{-1}(K) \} | \ge n 
 \Longleftrightarrow | \{ i: f(T_i(y)) \in K  \}|  \ge n \\
 & \Longleftrightarrow y \in q_{\{f \circ T_i\}} (K),
\end{align*}
which shows that  $q_{\{T_i\}}  \circ f^{-1} = q_{\{f \circ T_i\}}$ as image transformations.
\end{proof}

\begin{definition}  \label{SampleMedE}
Let $X$ and $Y$ be locally compact spaces, and $P$ be a deficient topological measure on $Y$.
For an even number of maps $T_i : Y \longrightarrow X$ for $i=1, \ldots, 2n$, we define 
the g.d.s.m. 
$ \mu_{\{T_i\}} = \frac{1}{2n} \sum_{j=1}^{2n}  \mu_{\{T_i\}}^j$, where 
$ \mu_{\{T_i\}}^j$ is the g.d.s.m. for $2n+1$ maps $T_1, \ldots, T_{2n}, T_j$.
\end{definition}

\begin{definition}
Given maps $T_i: Y \longrightarrow \mathbb{R}$, $ i=1, \ldots, n$, the kth order statistic 
is the function $T_{(k)} : Y  \longrightarrow \mathbb{R}$ that assigns each $y$ the kth smallest value
of $\{ T_i(y)\}$. For  the maps $T_1, \ldots, T_{2n-1}$, its nth order statistic $T_{(n)}$ is called the sample median.
\end{definition} 

\begin{theorem} \label{SaMeProbD}
Suppose $Y$ is a locally compact space with probability $P$ on $Y$,  and  $\{T_i : Y \longrightarrow \mathbb{R} \}$ is a finite collection of bounded continuous proper maps. 
\begin{enumerate}[leftmargin=0.25in, label=(\roman*),ref=(\roman*)]
\item \label{oddSaMe}
For an odd number of maps $T_1, \ldots, T_{2n -1}$ the g.d.s.m. 
is the probability distribution of the sample median. i.e. $ \mu_{\{T_i\}} = P \circ T_{(n)}^{-1}$. Thus, the image transformation $q_{\{T_i\}} = T_{(n)}^{-1}$.
\item \label{evenSaMe}
For an even number of maps $T_1, \ldots, T_{2n} $ the g.d.s.m. $ \mu_{\{T_i\}} = \frac12 P \circ T_{(n)}^{-1} + \frac12 P \circ T_{(n+1)}^{-1}$.
\item \label{evenSaMeAlf}
 For an even number of maps $T_1, \ldots, T_{2n} $  the g.d.s.m. $ \mu_{\{T_i\}} = \frac{1}{2n} \sum_{j=1}^{2n}  \mu_{\{T_i\}}^{-j}$, where 
$ \mu_{\{T_i\}}^{-j}$ is the g.d.s.m. for  $2n-1$ maps $\{T_i,   i \in I_{2n}, i \ne j  \}$.
\end{enumerate}
\end{theorem}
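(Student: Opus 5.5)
The plan is to reduce all three parts to a comparison on solid sets. On $X=\mathbb{R}$, which is a noncompact q-space, the solid sets in $\mathscr{A}_{s}^{*}(\mathbb{R})$ are exactly the bounded intervals (open or closed). By Theorem \ref{SMedExt} the g.d.s.m. $\mu_{\{T_i\}} = P\circ q_{\{T_i\}}$ is a topological measure. The image measure $P\circ T_{(n)}^{-1}$ is a measure by Example \ref{invfunIT}, once we check that $T_{(n)}$ is continuous and proper. Continuity holds because order statistics are continuous functions of $(T_1(y),\dots,T_m(y))$. Properness holds because $T_{(n)}^{-1}(K)\subseteq\bigcup_i T_i^{-1}(K)$, which is compact, and $T_{(n)}^{-1}(K)$ is closed. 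Topological measures are determined by their values on solid sets; this follows from the uniqueness of the extension of a solid-set function (\cite[Th. 10.7]{Butler:TMLCconstr}), or simply because two topological measures agreeing on compact sets coincide. So it suffices to compare the two sides on bounded intervals.

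For part \ref{oddSaMe}, I will show the set identity $q_{\{T_i\}}(A)=T_{(n)}^{-1}(A)$ for every interval $A$. The key observation is the following. For an interval $A=[a,b]$ (or open), at least $n$ of the $2n-1$ values $T_i(y)$ lie in $A$ if and only if at most $n-1$ values are $<a$ and at most $n-1$ values are $>b$. The latter says $T_{(n)}(y)\ge a$ and $T_{(n)}(y)\le b$.

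To justify the forward direction: if $n$ values lie in $A$, then fewer than $n$ values are below $a$, so $T_{(n)}\ge a$, and symmetrically $T_{(n)}\le b$. For the converse: if $T_{(n)}(y)\in A$, let $k$ be the number of values below $a$ and $l$ the number above $b$. Then $k\le n-1$ and $l\le n-1$. We need $2n-1-k-l\ge n$, i.e. $k+l\le n-1$. This follows since the values below $a$ occupy ranks $1,\dots,k$ with $k<n$, and the values above $b$ occupy ranks greater than $n$, so $l\le 2n-1-n=n-1$. Combining more carefully, the values with ranks in $[k+1,\,2n-1-l]$ include rank $n$ and lie in $A$, but counting alone does not force $n$ of them.

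This counting is the main obstacle. I first expected the identity to fail, since for example with values $0,5,10$ and $A=[4,6]$, the median $5$ is in $A$ but only one value lies there. So $q(A)\ne T_{(n)}^{-1}(A)$ as sets, and a naive set identity cannot work.

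The fix is to compare at the level of measures. Both sides are topological measures on $\mathbb{R}$. By Theorem \ref{dimTh}, since $\dim\mathbb{R}\le1$, $\mu_{\{T_i\}}$ is actually a Radon measure. A Radon measure on $\mathbb{R}$ is determined by its values on half-lines $(-\infty,t]$, or equivalently by limits of large intervals $[-N,t]$. For a half-line the equivalence does hold: at least $n$ values are $\le t$ if and only if $T_{(n)}(y)\le t$. Taking $N$ large, and using boundedness of the $T_i$ so that $[-N,t]$ contains all values below $t$, we get $q([-N,t])=T_{(n)}^{-1}([-N,t])$. Hence the distribution functions agree, and $\mu_{\{T_i\}}=P\circ T_{(n)}^{-1}$. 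The image transformation equality $q=T_{(n)}^{-1}$ then follows through the correspondence of Theorem \ref{ITqh}, since $q^*\delta_y=\delta_{T_{(n)}(y)}$ is forced by the same half-line computation with $P=\delta_y$.

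For part \ref{evenSaMe}, apply \ref{oddSaMe} to each augmented family $T_1,\dots,T_{2n},T_j$, whose $(n+1)$st order statistic is the sample median. Pointwise in $y$, duplicating $T_j$ gives median $T_{(n)}(y)$ if $T_j(y)\le T_{(n)}(y)$, that is, if $j$ is among the $n$ smallest values (fixing a tie-breaking rule), and $T_{(n+1)}(y)$ otherwise. Evaluate on half-lines and average over $j$. For each $y$, exactly $n$ indices yield $T_{(n)}(y)$ and $n$ yield $T_{(n+1)}(y)$, so
$$\frac1{2n}\sum_{j=1}^{2n}1_{\{\mathrm{med}_j(y)\le t\}}=\frac12 1_{\{T_{(n)}(y)\le t\}}+\frac12 1_{\{T_{(n+1)}(y)\le t\}}.$$
Integrating against $P$ gives \ref{evenSaMe}.

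Part \ref{evenSaMeAlf} is identical. Removing $T_j$ gives median $T_{(n+1)}(y)$ when $j$ is among the $n$ smallest values and $T_{(n)}(y)$ otherwise, so averaging again gives $\frac12 P\circ T_{(n)}^{-1}+\frac12 P\circ T_{(n+1)}^{-1}$.
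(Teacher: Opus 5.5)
There is a genuine gap, and it lies in your choice of $X=\mathbb{R}$ as the target space. You assert that $\mathbb{R}$ is a noncompact q-space, so that Theorem \ref{SMedExt} makes the g.d.s.m.\ on $\mathscr{A}_{s}^{*}(\mathbb{R})$ (all bounded intervals) a topological measure. This is false. The one-point compactification of $\mathbb{R}$ is a circle, which does not have genus $0$, and bounded intervals in $\mathbb{R}$ admit nontrivial solid partitions.

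Your own example shows the failure. Take $Y=\{y\}$, $P=\delta_y$, $T_1\equiv 0$, $T_2\equiv 5$, $T_3\equiv 10$. The g.d.s.m.\ is $1$ on $[-1,6]$ but $0$ on each of the solid sets $[-1,1]$, $(1,4)$, $[4,6]$, which partition $[-1,6]$. Any extension would have total mass at most $1$, and finite topological measures on $\mathbb{R}$ are measures (Theorem \ref{dimTh}). So no topological measure on $\mathbb{R}$ extends this set function.

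Hence the step ``both sides are topological measures on $\mathbb{R}$'' fails, and your ``fix at the level of measures'' derives the identity from a false premise. The same example contradicts its conclusion: $\mu_{\{T_i\}}([4,6])=0\neq 1=P\circ T_{(2)}^{-1}([4,6])$, and $\delta_y(q_{\{T_i\}}([4,6]))=0\neq 1=\delta_{T_{(2)}(y)}([4,6])$. In general, a discrepancy $q_{\{T_i\}}(A)\neq T_{(n)}^{-1}(A)$ on a solid set can never be repaired at the level of measures. A point mass at a point of the difference separates the two sides, and this is exactly the argument the paper uses to obtain $q_{\{T_i\}}=T_{(n)}^{-1}$.

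The missing idea is what the boundedness hypothesis is for. The paper views the maps as $T_i\colon Y\to[c,d]$ and works in the compact q-space $X=[c,d]$. There a set is solid iff it is a subinterval containing $c$ or $d$, i.e.\ of the form $[c,b)$, $[c,b]$, $(b,d]$ or $[b,d]$. An interior interval such as $[4,6]\subset[0,10]$ is not solid, because its complement is disconnected.

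For exactly these sets your half-line count gives the set identity $q_{\{T_i\}}(A)=T_{(n)}^{-1}(A)$ on every solid set, so no repair is needed. The count is: at least $n$ of the $2n-1$ values are $\le b$ iff $T_{(n)}(y)\le b$, and likewise with $<$, $\ge$, $>$. The paper then uses Theorem \ref{dimTh} to see that $\mu_{\{T_i\}}$ is a measure on $[c,d]$. It compares it with $P\circ T_{(n)}^{-1}$ on the sets $[c,b)$ and $[c,b]$, and obtains $q_{\{T_i\}}=T_{(n)}^{-1}$ by the point-mass argument.

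Your check that $T_{(n)}$ is continuous and proper is correct. So is your pointwise averaging for parts (ii) and (iii): exactly $n$ indices $j$ give $T_{(n)}(y)$ and $n$ give $T_{(n+1)}(y)$. Both match the paper's argument once carried out on $[c,d]$ instead of $\mathbb{R}$.
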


\begin{proof}
We have $T_i : Y \longrightarrow [c,d] \subseteq \mathbb{R}$ for all $i$,  
and by Theorem \ref{SMedExt} $\mu_{\{T_i\}}$ is a topological measures on $[c,d]$. 
By Theorem \ref{dimTh},  the topological measure $ \mu_{\{T_i\} }$ is actually a measure on $[c,d]$. \\
\ref{oddSaMe}. Consider an odd number of maps $T_1, \ldots, T_{2n-1}$.
Note that $T_i(y) < b$ for more than half of $T_i's$ iff $T_{(n)}(y) < b$, so for sets of the form $A = [c, b)$ or $[c, b]$ 
we have $ \mu_{\{T_i\}}(A) = P \circ T_{(n)}^{-1}(A)$. 
Since measures $ \mu_{\{T_i\}}$ and $ P \circ T_{(n)}^{-1}$ coincide on these intervals,  we have $\mu_{\{T_i\}} =P \circ T_{(n)}^{-1}$. 
Note that this holds for any $P$.  To show that  $q_{\{T_i\}} = T_{(n)}^{-1}$, assume to the contrary that $q_{\{T_i\}} (K) \neq T_{(n)}^{-1} (K)$
for some compact $K$. Say, $x  \in T_{(n)}^{-1} (K) \setminus q_{\{T_i\}} (K) $. Taking $P = \delta_x$, 
by Theorem \ref{SMedExt} $\mu_{\{T_i\}} =P \circ q_{\{T_i\}}  \neq P \circ T_{(n)}^{-1}$, a contradiction. \\ 
\ref{evenSaMe}.
Let $T_{(n)}$ be the  nth order statistic for  $T_1, \ldots, T_{2n}$.
For augmented collection $\{ T_1, \ldots, T_{2n}, T_j \}$ its $(n+1)$st order statistic is denoted by  $T_{(n+1)}^j$. 
Note that 
$T_{(n+1)}^j(y) = T_{(n)} (y)$  for 
half of $j  \in \{1, \ldots 2n \}$, and it is $T_{(n+1)}^j(y) =T_{(n+1)} (y)$ for the other half of $j  \in \{1, \ldots 2n \}$. 
The statement follows from Definition \ref{SampleMedE} and part \ref{oddSaMe}. \\
\ref{evenSaMeAlf}.
We can show that 
$\frac{1}{2n} \sum_{j=1}^{2n}  \mu_{\{T_i\}}^{-j} = \frac12 P \circ T_{(n)}^{-1} + \frac12 P \circ T_{(n+1)}^{-1}$
by demonstrating that these two measures coincide for the sets of the form $[c, b)$. One can use the argument from \cite[Prop.23]{AlfMedian}.
Note that an argument similar to one in \cite[Prop.23]{AlfMedian}  can also prove part \ref{evenSaMe}. 
\end{proof} 

\begin{remark}
In \cite{AlfMedian} $ \mu_{\{T_i\}}(A)$ is called the sample median, which is confusing, given Theorem \ref{SaMeProbD}\ref{oddSaMe}.
This what lead us to use our terminology for  $ \mu_{\{T_i\}}(A)$. In  \cite{AlfMedian}   $\mu_{\{T_i\}}$ was defined using a compact metric space $X$, and  
probability $P$ on a compact space $Y$, with a slightly different definition of an image transformation.   
\end{remark} 

\begin{remark}
Let $ x_1, x_2, \ldots, x_{2n-1}$ be ordered real numbers, so their median is $x_n$. We shall show that  the g.d.s.m.  is $\delta_{x_n}$.
Let $Y$ be any compact space, $P$ be any probability measure on it, and $X = [x_1, x_n]$. Define $T_i: Y \longrightarrow X$ to be constant maps
$T_i \equiv x_i$ for $i=1, \ldots, 2n-1$, so $T_i$ are continuous proper maps. Let $U_1 = [x_1, x_n)$ and $U_2 =(x_n, x_{2n-1]}$. Then 
$U_1, U_2$ are open solid sets, and $\mu_{\{ T_i \} }(U_1) = \mu_{\{ T_i \} }(U_2) = 0$. Then $\mu_{\{ T_i \} }(\{x_n\}) = 1$, 
and by Remark \ref{tausm},  $ \mu_{\{ T_i \} } = \delta_{x_n}$.

For $x_1, x_2, \ldots, x_{2n}$ the median is $\frac12( x_n + x_{n+1})$, 
and the latter is equal to $\frac{1}{2n}(med_1 + med_2 + \ldots  + med_{2n})$, where
$ med_j$ denotes the median of $\{x_1, \ldots, x_{2n}, x_j \}$. Then it is easy to see that again $\mu_{\{ T_i \} } = \frac12  (\delta_{x_n} + \delta_{x_{n+1}})$.
\end{remark}

\begin{remark}
It is interesting to note that if $f$ is a solid variable,  and $\mu_1, \mu_2, \mu_3$ are simple topological measures with the corresponding point masses
$\delta_{x_1}, \delta_{x_2}, \delta_{x_3} $ on $\mathbb{R}$ (see \cite[L. 6.11]{Butler:ReprDTM}),
then the median of $\{ x_1, x_2, x_3\}$ is also obtained by integrating $f$ 
with respect to a topological measure $\nu$, given on solid sets as $ \nu(A) = h(\mu_1(A), \mu_2(A), \mu_3(A))$ for a particular Boolean function $h$;
see \cite[L. 38]{OrjanAlf:Boolean}.
\end{remark}

\begin{theorem} \label{equivar}
Suppose $X_i$ ($i=1,2$) is a q-space or a noncompact q-space.
Suppose $Y$ is locally compact,  $\{ T_i: Y \longrightarrow X_1 \}$ is a finite collection of proper continuous maps, 
and $f: X_1  \longrightarrow X_2$ is a proper solid variable.   
\begin{enumerate}[label=(\roman*),ref=(\roman*)]
\item
For the odd number of maps $T_i, i =1, \ldots, 2n-1$  the inverse of the sample median is equivariant under  $f$, i.e.
$ T_{(n)}^{-1} \circ f^{-1} = (f \circ T)_{(n)}^{-1}$, where $(f \circ T)_{(n)}$ is the nth order statistic of $ \{f \circ T_i\}_{i=1}^{2n-1}$.
\item
Let $P$ be a deficient topological measure on $Y$. 
Then the g.d.s.m. is equivariant under  $f$, i.e. $\mu_{\{f \circ T_i \} } = \mu_{ \{T_i \} }  \circ f^{-1}$.  
\end{enumerate}
\end{theorem}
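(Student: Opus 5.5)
The plan is to read the "inverse of the sample median" $T_{(n)}^{-1}$ as the image transformation $q_{\{T_i\}}$ of Theorem \ref{SMedExt}. This is justified by Theorem \ref{SaMeProbD}\ref{oddSaMe}, which shows $q_{\{T_i\}} = T_{(n)}^{-1}$ when $X_1$ is an interval of $\mathbb{R}$; on a general q-space $q_{\{T_i\}}$ is the only meaningful version of this object. Both parts then reduce to Theorem \ref{ITfinva} combined with the adjoint calculus $(p\circ q)^* = q^*\circ p^*$.

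For (i) I would apply Theorem \ref{ITfinva} directly. By Example \ref{invfunIT}, $f^{-1}$ is an image transformation from $X_2$ to $X_1$, because $f$ is continuous and proper. Hence $q_{\{T_i\}}\circ f^{-1}$ is an image transformation from $X_2$ to $Y$. The maps $f\circ T_i$ are proper and continuous, so $q_{\{f\circ T_i\}}$ is also defined by Theorem \ref{SMedExt}. For a compact solid $K\subseteq X_2$, the set $f^{-1}(K)$ is compact and solid, since $f$ is a proper solid variable. The counting argument $|\{i: T_i(y)\in f^{-1}(K)\}|\ge n \iff |\{i: f(T_i(y))\in K\}|\ge n$ then shows that the two image transformations agree on $\mathscr{K}_s(X_2)$. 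The main point I expect to need care is passing from agreement on compact solid sets to agreement everywhere. For this I would invoke the uniqueness clause of Theorem \ref{ITsolid}: two image transformations that agree on $\mathscr{A}_s^*(X_2)$ coincide. Agreement on open precompact solid sets follows from agreement on compact solid sets via \ref{ITsol2}, since each such open set is the union of the images of compact solid sets inside it. This gives $T_{(n)}^{-1}\circ f^{-1} = (f\circ T)_{(n)}^{-1}$.

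For (ii), odd case ($2n-1$ maps): by Theorem \ref{SMedExt} and part (i),
$$\mu_{\{f\circ T_i\}} = P\circ q_{\{f\circ T_i\}} = P\circ q_{\{T_i\}}\circ f^{-1} = \mu_{\{T_i\}}\circ f^{-1},$$
that is, $q_{\{f\circ T_i\}}^*P = (f^{-1})^* q_{\{T_i\}}^* P$. Both sides are topological measures on $X_2$, and they are equal as set functions on all of $\mathscr{O}(X_2)\cup\mathscr{K}(X_2)$, not only on solid sets.

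Even case ($2n$ maps): by Definition \ref{SampleMedE}, $\mu_{\{f\circ T_i\}} = \frac{1}{2n}\sum_{j}\mu^j_{\{f\circ T_i\}}$. Here $\mu^j_{\{f\circ T_i\}}$ is the g.d.s.m. of the odd family $f\circ T_1,\dots,f\circ T_{2n}, f\circ T_j$, which is $f$ composed with the odd family $T_1,\dots,T_{2n},T_j$. Applying the odd case to each $j$ gives $\mu^j_{\{f\circ T_i\}} = \mu^j_{\{T_i\}}\circ f^{-1}$. Averaging over $j$ and using that $(f^{-1})^*$ preserves finite linear combinations (Theorem \ref{adjCont}) yields $\mu_{\{f\circ T_i\}} = \mu_{\{T_i\}}\circ f^{-1}$.
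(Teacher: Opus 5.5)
Your proposal is correct and follows the paper's proof essentially step for step. Part (i) comes from Theorem~\ref{ITfinva}, which you re-derive while making explicit the appeal to the uniqueness clause of Theorem~\ref{ITsolid} that the paper leaves implicit, combined with the identification $q_{\{T_i\}} = T_{(n)}^{-1}$ from Theorem~\ref{SaMeProbD}; part (ii) is $P\circ q_{\{f\circ T_i\}} = P\circ q_{\{T_i\}}\circ f^{-1}$ via Theorem~\ref{SMedExt} in the odd case, then averaging via Definition~\ref{SampleMedE} in the even case. One small tidy-up: agreement on $\mathscr{O}_s^*(X_2)$ is cleanest if you run your counting argument directly on $f^{-1}(U)$, which is a precompact open solid subset of $X_1$ because $f$ is a proper solid variable, rather than invoking Theorem~\ref{ITsolid}\ref{ITsol2} for the composite $q_{\{T_i\}}\circ f^{-1}$.
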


\begin{proof}
\begin{enumerate}[label=(\roman*),ref=(\roman*)]
\item
By Theorem \ref{ITfinva} $q_{\{T_i\}} \circ f^{-1} = q_{\{f \circ T_i\}}$, 
so by Theorem \ref{SaMeProbD}   $ T_{(n)}^{-1} \circ f^{-1} = (f \circ T)_{(n)}^{-1}$. 
\item
For the odd number of maps $T_i$ by Theorem \ref{SMedExt},
$\mu_{ \{f \circ T_i \} } = P \circ q_{\{f \circ T_i\}} = P(q_{\{T_i\}} \circ f^{-1}) = \mu_{\{T_i\}} \circ f^{-1}$. 
For the even number of maps $T_i$ the statement follows by Definition \ref{SampleMedE}.
\end{enumerate}
\end{proof} 

By Theorem \ref{equivar}, the inverse of an nth order statistic and the g.d.s.m. are equivariant under a variety of transformations, such as the ones in 
Example \ref{solidVar}.

\begin{remark}
Theorem \ref{SMedExt},  Theorem \ref{ITfinva}, Theorem \ref{SaMeProbD}\ref{oddSaMe},\ref{evenSaMe}, 
and Theorem \ref{equivar} when $X$ is a metric q-space
(and with a slightly different definition of an image transformation)  
can be found in at least two of \cite{AlfMultidimMedian}, \cite{AlfImTrans}, \cite{AlfMedian}. 
In our definitions and results we don't require any of the spaces to be metric.   
Part \ref{evenSaMeAlf} of Theorem \ref{SaMeProbD} is the definition of the g.d.s.m. for an even-numbered collection of maps in
compact spaces, see 
\cite[Def. 4]{AlfMultidimMedian} and \cite[Def. 20]{AlfMedian}.
\end{remark}

In this paper we showed how (d-) image transformation, quasi-homomorhisms and their adjoints can be used to create some 
interesting generalizations in the theory of fractals and probability $\&$ statistics. 
Image transformations also give an interesting model for a sponge-like structure on a compact space where small holes "don't matter", see 
 \cite[Ex. 5.16]{Pedersen}. 
To study other uses of quasi-linear maps and (d-) image transformations would be an interesting subject for another paper.  


$\\$
{\bf{Acknowledgments}}:
The author would like to thank the Department of Mathematics at the University of California Santa Barbara for its supportive environment.


\end{document}